\newcommand{\BOX}{\ensuremath\Box}
\newtheorem{theorem}{Theorem}
\newtheorem*{theorem*}{Theorem}
\newtheorem{proposition}{Proposition}
\newtheorem{lemma}[proposition]{Lemma}
\theoremstyle{remark}
\newtheorem{remark}[proposition]{Remark}
\theoremstyle{definition}
\newtheorem{definition}[proposition]{Definition}
\DeclareMathOperator{\Rel}{Re}
\newcommand{\N}{\mathbb{N}}
\newcommand{\C}{\mathbb{C}}
\newcommand{\R}{\mathbb{R}}
\newcommand{\ep}{\varepsilon}
\definecolor{darkgreen}{rgb}{0,0.5,0}
\definecolor{darkblue}{rgb}{0,0,0.7}
\definecolor{darkred}{rgb}{0.9,0.1,0.1}
\definecolor{lightblue}{rgb}{0,0.51,1}
\begin{document}

\title[Scale-invariant estimates and vorticity alignment in the half-space]{Scale-invariant estimates and vorticity alignment for Navier-Stokes in the half-space with no-slip boundary conditions}

\author[T. Barker]{Tobias Barker}
\address[T. Barker]{DMA, \'{E}cole Normale Sup\'erieure, CNRS, PSL Research University, 75\, 005 Paris}
\email{tobiasbarker5@gmail.com}

\author[C. Prange]{Christophe Prange}
\address[C. Prange]{Universit\'e de Bordeaux, CNRS, UMR [5251], IMB, Bordeaux, France}
\email{christophe.prange@math.u-bordeaux.fr}

\keywords{}
\subjclass[2010]{}
\date{\today}

\maketitle

\noindent {\bf Abstract} This paper is concerned with geometric regularity criteria for the Navier-Stokes equations in $\mathbb{R}^3_{+}\times (0,T)$ with no-slip boundary condition, with the assumption that the solution satisfies the `ODE blow-up rate' Type I condition.
More precisely, we prove that if the vorticity direction is uniformly continuous on subsets of 
$$\bigcup_{t\in(T-1,T)} \big(B(0,R)\cap\mathbb{R}^3_{+}\big)\times {\{t\}},\,\,\,\,\,\, R=O(\sqrt{T-t})$$
where the vorticity has large magnitude, then $(0,T)$ is a regular point. This result is inspired by and improves the regularity criteria given by Giga, Hsu and Maekawa in \cite{giga2014liouville}. We also obtain new local versions for suitable weak solutions near the flat boundary.
Our method hinges on new scaled Morrey estimates, blow-up and compactness arguments and `persistence of singularites' on the flat boundary. The scaled Morrey estimates seem to be of independent interest.

\vspace{0.3cm}

\noindent {\bf Keywords}\, Navier-Stokes equations, half-space, boundary regularity, geometric regularity criteria, vorticity alignment, Type I blow-up, Morrey spaces, persistence of singularities.

\vspace{0.3cm}

\noindent {\bf Mathematics Subject Classification (2010)}\, 35A99, 35B44, 35B65, 35D30, 35Q30, 76D05

\section{Introduction}
This paper concerns the Navier-Stokes system in the half-space $\mathbb{R}^3_{+}:=\{(x',x_{3})\in\mathbb{R}^3,\,\,\,\,x_{3}>0\}$ with {no-slip} boundary condition:
\begin{equation}\label{noslip}
\partial_{t}u-\Delta u+u\cdot\nabla u+\nabla p=0,\,\,\,\,u(x,0)=u_{0}\,\,\,\,\,\nabla\cdot u=0,\,\,\,u|_{\partial\mathbb{R}^3_{+}}=0,\,\,\,\,x\in\mathbb{R}^3_{+}\,\,\,\,\,t>0.
\end{equation}
For the simplest case of the fluid occupying the whole-space $\mathbb{R}^3$, it is a Millennium prize problem \cite{fefferman2006existence} as to whether or not solutions to the Navier-Stokes equations, with Schwartz class divergence-free initial data, remain smooth for all times. 

In \cite{constantin1993direction}, Constantin and Fefferman provided a geometric regularity criteria for the vorticity  $\omega= \nabla\times u$ of solutions on the whole-space, which remarkably does not depend on scale-invariant quantities.\footnote{By scale-invariant quantities, we mean quantities which are invariant with respect to the Navier-Stokes rescaling $(u_{\lambda}(y,s),p_{\lambda}(y,s))=(\lambda u(\lambda y, \lambda^2 s), \lambda^2 p(\lambda y, \lambda^2 s))$. The vast majority of regularity criteria for the Navier-Stokes equations are stated in terms of scale-invariant quantities since, heuristically at least, the diffusive effects and non-linear effects are `balanced'.} Specifically, they showed that if\footnote{ Here $\angle(a,b)$ denotes the angle between the vectors $a$ and $b$.}
\begin{multline}\label{constantinfefferman}
|\sin(\angle( \omega(x+y,t), \omega(x,t))|\leq C|y|\\
\,\,\,\,\textrm{for}\,\,\,\,(x,t)\in \Omega_{d}:=\{(y,s)\in\mathbb{R}^3\times (0,T): |\Omega(y,s)|>d\}
\end{multline}
then $u$ is smooth on $\mathbb{R}^3\times (0,T]$. Their proof uses energy estimates for the vorticity equation
\begin{equation}\label{vorticity}
\partial_{t}\omega-\Delta\omega+u\cdot\nabla\omega-\omega\cdot\nabla u=0
\end{equation}
to get
\begin{equation}\label{vortenergyest}
\|\omega(\cdot,t)\|_{L_{2}(\mathbb{R}^3)}^2+ 2\int\limits_{0}^{t}\int\limits_{\mathbb{R}^3} |\nabla w|^2 dxdt\leq 2\int\limits_{0}^{t}\int\limits_{\mathbb{R}^3} (\omega\cdot\nabla u)\cdot\omega dxdt
\end{equation}
 Then Constantin and Fefferman proceed with a careful analysis of the stretching term
$$\int\limits_{0}^{t}\int\limits_{\mathbb{R}^3} (\omega\cdot\nabla u)\cdot\omega dxdt.$$
In particular, the Biot-Savart law, integration by parts and linear algebra identities are used to show that the most singular contribution of the stretching term can be controlled by
\begin{equation}\label{vorticitydirectioncontrol}
\int\limits_{0}^{t}\iint\limits_{\Omega_{d}\times\Omega_{d}} \frac{|\omega(x,s)|^2|\omega(y,s)||\sin(\angle{(\omega(y,s), \omega(x,s)})|}{|x-y|^3} dydxds.
\end{equation}
Crucially \eqref{constantinfefferman} depletes the singularity in the integral of this stretching term. For certain extensions of this geometric regularity criterion, we refer (non-exhaustively) to \cite{da2002regularizing,grujic2004interpolation,zhou2005new,grujic2006space,grujic2009localization,GM11,li2017vortex}.

For the case of the Navier-Stokes equations in $\mathbb{R}^3_{+}$ with no-slip boundary conditions \eqref{noslip}, vorticity is generated at the boundary. Specifically, $\omega$ has a non-zero trace on $\partial \mathbb{R}^3_{+}$. This provides a block to applying energy methods to the vorticity equation \eqref{vorticity}. Consequently, the following remains open
\begin{itemize}
\item[] \textbf{(Q)}: For finite energy solutions of the Navier-Stokes equations in $\mathbb{R}^3_{+}\times (0,T)$, with no-slip boundary condition and divergence-free initial data $u_{0}\in C^{\infty}_{0}(\mathbb{R}^3_{+})$, does \eqref{constantinfefferman} imply that $u$ is smooth on $\mathbb{R}^3_{+}\times(0,T]$?
\end{itemize} 
Note that with perfect slip boundary conditions
\begin{equation}\label{slip}
u_{3}= \partial_{3}u_{2}= \partial_{2} u_{1}\,\,\,\,\,\textrm{on}\,\,\,\,\,\,\partial{\mathbb{R}}^{3}_{+}, \qquad  u=(u_{1},u_{2},u_{3}),
\end{equation}
vorticity is not created at the boundary and the geometric regularity condition is known to hold \cite{da2006vorticity}.
The issue of vorticity creation at the boundary has meant that certain other situations that are understood for the Navier-Stokes equations in the whole-space $\mathbb{R}^3$, remain open for the case of the fluid occupying the half-space with no-slip boundary condition.
Two such examples are:
\begin{enumerate}
\item Existence of backward self-similar solutions  $$u(x,t)= \frac{1}{\sqrt{T-t}}U\Big(\frac{x}{\sqrt{T-t}}\Big),\,\,\,\,(x,t)\in\mathbb{R}^3_{+}\times (0,T)$$ 
 with finite local energy and no-slip boundary condition. For the whole-space such solutions were shown not to exist in \cite{tsai1998leray}.
\item Smoothness of solutions to the Navier-Stokes equations in $\mathbb{R}^3_{+}\times (0,\infty)$  with no-slip boundary condition that are axisymmetric without swirl. Such solutions have the following form in cylindrical coordinates
$$
u(r,z,t)= u_{r}(r,z)\vec{e}_{r}+u_{z}(r,z)\vec{e}_{z},
$$
where $\vec{e}_{r}=(\cos(\theta), \sin(\theta),0)$, $\theta\in(-\pi,\pi)$ and $\vec{e}_{z}=(0,0,1).$ 
For the whole-space, such solutions were shown to be smooth in \cite{lady}.
\end{enumerate}
To the best of our knowledge, the only  vorticity alignment regularity criteria known for the Navier-Stokes equations in $\mathbb{R}^3_{+}\times (0,T)$ (with no-slip boundary condition), was proven by Giga, Hsu and Maekawa in \cite{giga2014liouville} for mild solutions under the additional assumption that $u$ satisfies the Type I condition \eqref{TypeIhalf} (see below in Theorem \ref{CAnconcentratingglobal}). 
In particular, in \cite{giga2014liouville} it was shown that if $\eta:[0,\infty)\rightarrow \mathbb{R}$ is a nondecreasing continuous function with $\eta(0)=0$ , $\xi=\frac{\omega}{|\omega|}$ and $\Omega_{d}:=\{(x,t)\in\mathbb{R}^3_{+}\times(0,T): |\omega(x,t)|>d\}$ then the following holds true. Namely, the assumption
\begin{equation}\label{GigaHsuMaekawa}
|\xi(x,t)-\xi(y,t)|\leq \eta(|x-y|)\,\,\,\,\,\textrm{for}\,\,\textrm{all}\,\, (x,t),(y,t)\in\Omega_{d}
\end{equation}
implies that $u$ is bounded up to $t=T$. 

The main goal of our paper is to prove a vorticity alignment regularity criteria, for the Navier-Stokes equations in $\mathbb{R}^3_{+}\times (0,T)$ with no-slip boundary condition, that improves upon the result in \cite{giga2014liouville}.  In what follows, we say that $(x_{0},T)$ is a `regular point' of $u$ if there exists $r>0$ such that $u\in L^{\infty}((B(x_{0},r)\cap\mathbb{R}^3_{+})\times (T-r^2,T))$. Any point that is not regular is defined to be a `singular point'.  Let us state our main theorem below.
\begin{theorem}\label{CAnconcentratingglobal}
Suppose $u$ is a mild solution to the Navier-Stokes equations in $\mathbb{R}^3_{+}\times (0,T)$, $(T>1)$, with no-slip boundary condition $u|_{\partial \mathbb{R}^3_{+}}=0$ and with divergence-free initial data $u_{0}\in C^{\infty}_{0}(\mathbb{R}^{3}_{+}).$ Furthermore, suppose that for $(x,t)\in \mathbb{R}^3_{+}\times (0,T)$:
\begin{equation}\label{TypeIhalf}
|u(x,t)|\leq \frac{M}{\sqrt{T-t}}.
\end{equation}
Let $t^{(n)}\uparrow T$, $x_{0}\in\mathbb{R}^3_{+}\cup \partial \mathbb{R}^3_{+}$, $d>0 $ and $\delta>0$. Define $\omega=\nabla\times u$, $\xi:= \frac{\omega}{|\omega|}$, the set of high vorticity,
$$\Omega_{d}:=\{(x,t)\in \mathbb{R}^3_{+}\times(0,T): |\omega(x,t)|>d\},$$
the cone
$$ \mathcal{C}_{\delta, x_{0}}:=\bigcup_{t\in(T-1,T)}\{x\in\mathbb{R}^3_{+}: |x-x_{0}|<\delta\sqrt{T-t_{0}}\}  $$
and the time-sliced cone
$$ \mathcal{C}_{t^{(n)}, \delta, x_{0}}:= \bigcup_{n}\{(x,t^{(n)}):x\in\mathbb{R}^3_{+}\,\,\,\textrm{and}\,\,\, |x-x_{0}|< \delta \sqrt{T-t^{(n)}}\}.$$
Let $\eta: \mathbb{R} \rightarrow \mathbb{R}$ be a continuous function with $\eta(0)=0$.\\
Under the above assumptions, there exists $\delta(M,u_{0})>0$ such that the following holds true: on the one hand
\begin{equation}
\sup_{(x,t)\in \mathcal{C}_{t^{(n)}, \delta, x_0}} |\omega(x,t)|<\infty\Rightarrow\,\,\,\,\,\,(x_{0}, T)\quad\mbox{is a regular point of}\quad  u,
\label{e.vortbddreg}
\end{equation}
and on the other hand
\begin{align}\label{CAconditionconcentrate}\tag{CA}
|\xi(x,t)-\xi(y,t)|\leq \eta(|x&-y|)\,\,\,\,\,\,\,\,\,\,\,\,\,\,\,\textrm{in}\,\,\,\,\,\Omega_{d}\cap \mathcal{C}_{ \delta, x_0}\\
&\Downarrow\notag\\
(x_{0}, T)\quad \mbox{is a}\ & \mbox{regular point of}\quad u.\notag
\end{align}
\end{theorem}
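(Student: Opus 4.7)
The plan is to argue by contradiction, combining a blow-up/compactness scheme with a Liouville-type rigidity result for the limit. Suppose that $(x_0,T)$ is a singular point, and fix the blow-up scale $\lambda_n:=\sqrt{T-t^{(n)}}\downarrow 0$. Rescale around $(x_0,T)$ by setting
\[
u^{(n)}(y,s):=\lambda_n\, u(x_0+\lambda_n y,\ t^{(n)}+\lambda_n^2 s),
\]
so that each $u^{(n)}$ solves \eqref{noslip} on a domain (the half-space if $x_0\in\partial\mathbb{R}^3_+$; a half-space receding to all of $\mathbb{R}^3$ if $x_0\in\mathbb{R}^3_+$ is interior) and inherits from \eqref{TypeIhalf} a uniform bound $|u^{(n)}(y,s)|\le M/\sqrt{1-s}$ on fixed parabolic windows. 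The cone $\mathcal{C}_{\delta,x_0}$, measured in rescaled coordinates, always contains the box $\{|y|<\delta,\ -1<s<0\}$, which is the cylinder where the hypotheses will be used.

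The next step is to extract a compactness limit. I would invoke the scaled Morrey estimates announced in the abstract to obtain uniform bounds, on local parabolic cylinders touching the flat boundary, for $\nabla u^{(n)}$ and $\omega^{(n)}$ in appropriate (boundary-adapted) Morrey norms; combined with an Aubin--Lions-type argument and the equation for $u^{(n)}$, this yields a subsequential limit $\bar u$ on an ancient parabolic cylinder, satisfying no-slip on the surviving boundary, the Type I bound, and the suitable/mild-solution property. The role of $\delta=\delta(M,u_0)$ is to ensure that on the shrinking spatial cones the mild solution is quantitatively controlled by its data, so that ``persistence of singularities on the flat boundary'' (the local quantitative singular-point machinery of the paper) applies: the singular point $(x_0,T)$ of $u$ transfers to a singular point $(0,0)$ of $\bar u$, in particular $\bar u\not\equiv 0$ and its vorticity $\bar\omega$ is nontrivial.

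The third step is to pass the two hypotheses through the rescaling. For the bounded-vorticity implication \eqref{e.vortbddreg}, the uniform bound of $\omega$ on $\mathcal{C}_{t^{(n)},\delta,x_0}$ becomes, after multiplying by $\lambda_n^2\to 0$, a vanishing $L^\infty$ bound for $\omega^{(n)}$ on the rescaled time slice; this forces $\bar\omega\equiv 0$ on that slice, which together with the Type I bound and the mild-solution structure already contradicts the persistence of singularity, giving \eqref{e.vortbddreg}. For the alignment implication \eqref{CAconditionconcentrate}, the hypothesis is scale-invariant by design, so in rescaled variables
\[
|\bar\xi(y,s)-\bar\xi(y',s)|\le \eta(\lambda_n|y-y'|)\longrightarrow 0 \quad\text{on the high-vorticity set of }\bar\omega,
\]
so that the vorticity direction of $\bar u$ is Lipschitz (in fact constant) along each time slice on the set $\{|\bar\omega|>0\}\cap\{|y|<\delta\}$. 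This brings the limit problem under the reach of a Constantin--Fefferman/Giga--Hsu--Maekawa-type Liouville principle for ancient mild solutions on $\mathbb{R}^3_+$ (or $\mathbb{R}^3$) with Type I bound and aligned vorticity, which forces $\bar u\equiv 0$, again contradicting the persistence of singularity at $(0,0)$.

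The hard part will be the boundary half of the compactness argument: vorticity is generated at $\partial\mathbb{R}^3_+$, so the Constantin--Fefferman-style analysis of \eqref{vortenergyest}--\eqref{vorticitydirectioncontrol} is unavailable, and one cannot work directly with the vorticity equation near the boundary. Instead the entire compactness scheme has to be run at the level of $(u,p)$ via the boundary-adapted scaled Morrey estimates, and strong convergence of $\omega^{(n)}$ up to the flat boundary must be recovered a posteriori from interior $\nabla u^{(n)}$ compactness together with local energy control near $\partial\mathbb{R}^3_+$. The persistence-of-singularities step is the other delicate point, since it has to be quantitative in $M$ and $u_0$ to fix the cone aperture $\delta$.
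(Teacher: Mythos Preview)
Your overall architecture---blow up around $(x_0,T)$, use Type I $\Rightarrow$ scaled Morrey bounds for compactness, invoke persistence of singularities (Lemma~\ref{stabilitysingularpointshalfspace}) to transfer the singular point to the limit---matches the paper. But the endgame you propose does not close, and the gap is precisely where the paper's new contribution sits.

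\textbf{The Liouville step is where your argument breaks.} After rescaling, the continuous alignment condition only tells you that $\bar\xi(\cdot,t)$ is constant on the bounded box $\{|y|<\delta\}$, not on all of $\R^3_+$. No known Liouville theorem (Constantin--Fefferman, Giga--Hsu--Maekawa, or Seregin's 2D half-space version) applies to an ancient solution whose vorticity is aligned only on a compact set; those results all need the two-dimensionality (or alignment) to be global. The paper does not conclude $\bar u\equiv 0$ at all. Instead, the key device is Proposition~\ref{2Dreductionbycompactness}: it shows that if $\bar\omega(\cdot,-t_0)\cdot\vec e_i=0$ for $i=2,3$ on a ball $B^+(\gamma(M,M')\sqrt{t_0})$ at a \emph{single} time, then $(0,0)$ is regular for $\bar u$. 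This is proved by a \emph{second} blow-up/compactness argument (now sending the ball radius to infinity), after which the limit is genuinely globally 2D at time $-1$ and one can use uniqueness of $L_\infty$ mild solutions together with 2D Leray--Hopf regularity. The aperture $\delta$ in the theorem is then \emph{defined} to be this $\gamma(M,M')$; it does not come from the persistence-of-singularities machinery as you suggest.

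\textbf{You also miss a subtle boundary dichotomy.} In the half-space, knowing $\bar\xi$ is constant on the box is not enough to pin down the direction: one uses the no-slip condition to see $\bar\omega\cdot\vec e_3=0$ on $\partial\R^3_+$, and hence $\bar\xi\perp\vec e_3$---but only if $\bar\omega$ does not vanish identically on the boundary. When $\bar\omega|_{\partial\R^3_+}\equiv 0$ for all times, the vorticity direction is undefined there and this step fails. The paper handles this case separately (Step~5) by a reflection to $\R^3$ and the Escauriaza--Seregin--\v{S}ver\'ak unique continuation theorem for parabolic operators, which forces $\bar\omega\equiv 0$ and hence $\bar u\equiv 0$. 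Your sketch does not flag this case.

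For \eqref{e.vortbddreg} your idea is right but incomplete: $\bar\omega(\cdot,-1)=0$ holds only on $B^+(\delta)$, and you still need Proposition~\ref{2Dreductionbycompactness} (trivially satisfied) to turn that into regularity at $(0,0)$.
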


The condition \eqref{CAconditionconcentrate} is dubbed the `continuous alignment condition'.

\begin{remark}\label{localisations}
In this paper we also obtain local variations of Theorem \ref{CAnconcentratingglobal}. To the best of the authors' knowledge those are the first local results regarding regularity under the vorti\-city alignment condition, for a Navier-Stokes solution having no-slip on the flat part of the boundary. Previously all localisations were known for only interior cases \cite{grujic2006space,grujic2009localization,GM11}. For precise statements of our local results, we refer the reader to Section \ref{sec.local} `Geometric regularity criteria near a flat boundary: local setting'.
\end{remark}
\begin{remark}\label{converse}
When $x_{0}$ is a regular point of $u$ belonging to the interior of the half-space, it can be seen that $u$ has H\"{o}lder continuous spatial derivatives near $x_{0}$ and the converse statement to Theorem \ref{CAnconcentratingglobal} is true, i.e. the vorticity alignment condition \eqref{CAconditionconcentrate} holds.
It is not clear to the authors whether or not that remains to be the case when $x_{0}$ lies on $\partial\mathbb{R}^3_{+}$. The difficulty is that there are examples in \cite{Kang05} and \cite{SSv10}, for a Navier-Stokes solution in the local setting with no-slip boundary condition on the flat part of the boundary, that demonstrate that boundedness of $u$ near the flat boundary does not imply boundedness of $\nabla u$. It is interesting to note that these examples do not provide a counterexample to the converse statement of Theorem \ref{CAnconcentratingglobal}. In fact in \cite{SSv10} the construction is based upon a monotone shear flow, whose vorticity direction is constant.
\end{remark}

\subsection{Comparison of Theorem \ref{CAnconcentratingglobal} to previous literature and novelty of our results}

In \cite{giga2014liouville} progress was made at establishing the geometric regularity criteria for the vorticity when the fluid occupies the half-space $\mathbb{R}^3_{+}$ with no-slip boundary condition, with the additional assumption that the solution satisfies Type I bounds.
As we mention above, the geometric regularity criterion remains in general an outstanding open problem when the fluid occupies the half-space $\mathbb{R}^3_{+}$ with no-slip boundary condition. Let us now state the result first proven in \cite{giga2014liouville}.
\begin{theorem*}[{\cite[Theorem 1.3]{giga2014liouville}}]\label{Gigaetal}
Suppose $u$ is a mild solution to the Navier-Stokes equations in $\mathbb{R}^3_{+}\times (0,T)$ with boundary condition $u|_{\partial \mathbb{R}^3_{+}}=0$ and with divergence-free initial data $u_{0}\in C^{\infty}_{0}(\mathbb{R}^{3}_{+})$. 
Let $d$ be a positive number and let $\eta$ be a nondecreasing\footnote{In \cite{giga2014liouville} the authors assume that $\eta$ is nondecreasing. However, this assumption can be removed. What matters is that $\eta$ is continuous and $\eta(0)=0$.} 
continuous function on $[0,\infty)$ satisfying $\eta(0)=0$. Assume that $\eta$ is a modulus of continuity in the $x$ variables for the vorticity direction $\xi=\omega/|\omega|$, in the sense that
\begin{equation}\label{vorticitydirection}
|\xi(t,x)-\xi(t,y)|\leq \eta(|x-y|)\,\,\,\,\,\,\textrm{for}\,\,\textrm{all}\,\,\,\,\,(x,t),(y,t)\in \Omega_{d},
\end{equation}
where $\Omega_{d}$ is defined in Theorem \ref{CAnconcentratingglobal}. Then $u$ is bounded up to $t=T$.
\end{theorem*}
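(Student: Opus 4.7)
The plan is to argue by contradiction via a parabolic blow-up combined with a Liouville-type theorem for bounded ancient mild solutions. Assume $u$ is not bounded up to $t=T$; since $u_{0}\in C^{\infty}_{0}(\mathbb{R}^{3}_{+})$, the mild solution is smooth on $\mathbb{R}^{3}_{+}\times(0,T)$, so unboundedness can only come from $t=T$ and there exist $(x^{(n)},t^{(n)})\in\overline{\mathbb{R}^{3}_{+}}\times(0,T)$ with $t^{(n)}\uparrow T$ and $M_{n}:=|u(x^{(n)},t^{(n)})|\to\infty$. I rescale by the Navier-Stokes scaling centred at $(x^{(n)},t^{(n)})$ with $\lambda_{n}:=M_{n}^{-1}\to 0$:
$$u^{(n)}(y,s):=\lambda_{n}\,u\bigl(x^{(n)}+\lambda_{n}y,\;t^{(n)}+\lambda_{n}^{2}s\bigr),\qquad |u^{(n)}(0,0)|=1.$$
The Type I regime of \cite{giga2014liouville} (under which this theorem is proved), together with scale-invariant estimates for the half-space Stokes kernel, yields uniform bounds on $u^{(n)}$ in scale-invariant norms over expanding parabolic neighbourhoods of $(0,0)$.

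Depending on whether $a_{n}:=x^{(n)}_{3}/\lambda_{n}\to\infty$ or $a_{n}\to a^{*}\in[0,\infty)$, the rescaled domain exhausts either all of $\mathbb{R}^{3}$ or a shifted half-space with no-slip on $\{y_{3}=-a^{*}\}$. Using Arzel\`a--Ascoli via the uniform regularity estimates, I extract a subsequential limit $U$ which is a bounded ancient mild solution on the limiting domain with $|U(0,0)|=1$. The Navier-Stokes rescaling preserves angles but contracts distances by $\lambda_{n}$, so the rescaled alignment condition becomes $|\xi^{(n)}(y_{1},s)-\xi^{(n)}(y_{2},s)|\le\eta(\lambda_{n}|y_{1}-y_{2}|)$ on the set $\{|\omega^{(n)}|>d\lambda_{n}^{2}\}$; letting $\lambda_{n}\to 0$, the modulus collapses and the threshold vanishes, so $\xi_{U}$ is in fact \emph{constant} wherever $|\omega_{U}|>0$.

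The proof then reduces to a Liouville statement: any bounded ancient mild solution of Navier-Stokes (on $\mathbb{R}^{3}$, or on a half-space with no-slip) whose vorticity direction is constant on $\{\omega\neq 0\}$ must vanish identically. In the interior case on $\mathbb{R}^{3}$, the Constantin--Fefferman depletion integral \eqref{vorticitydirectioncontrol} vanishes identically, so the vortex-stretching term in \eqref{vortenergyest} is controlled by lower-order quantities; combined with an ancient-time extension of the energy identity and the uniform bound on $U$, this forces $\omega_{U}\equiv 0$, whence $U$ is a bounded ancient harmonic Leray solution and therefore trivial by the standard Liouville theorem on $\mathbb{R}^{3}$, contradicting $|U(0,0)|=1$. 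The main obstacle is the \emph{boundary case}: on the half-space with no-slip, the Constantin--Fefferman vorticity energy identity fails because $\omega_{U}$ has a non-trivial trace on $\partial\mathbb{R}^{3}_{+}$, which is precisely the phenomenon flagged as the open problem \textbf{(Q)}. To bypass it, I plan to work at the velocity level using the explicit half-space Stokes kernel and the no-slip condition to obtain scale-invariant regularity of $U$ and $p_{U}$, apply the Constantin--Fefferman depletion only on interior cutoffs (where the constant-direction information on $\xi_{U}$ trivialises it), and then invoke a Liouville theorem for bounded ancient mild solutions on the half-space with no-slip in the Type I regime, forcing $U\equiv 0$ and contradicting $|U(0,0)|=1$. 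This last step is where the specific structure of the no-slip half-space has to be used in a quantitative way, and is the heart of the argument.
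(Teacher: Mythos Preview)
Your blow-up framework is broadly right and matches the outline of the proof in \cite{giga2014liouville} as described in this paper: rescale around a presumed singularity, extract a nontrivial mild bounded ancient solution $U$ on either $\mathbb{R}^3$ or $\mathbb{R}^3_+$, and use the continuous alignment condition to see that the vorticity direction of $U$ is constant on $\{\omega_U\neq 0\}$. The gaps are all in the Liouville step.

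First, your whole-space argument via the Constantin--Fefferman energy identity does not get started: the limit $U$ is a \emph{bounded} ancient solution, and there is no reason for $\omega_U\in L_2(\mathbb{R}^3)$, so \eqref{vortenergyest} is not available. Your phrase ``bounded ancient harmonic Leray solution'' conflates two incompatible classes. Even if you could show $\omega_U\equiv 0$, bounded harmonic flows on $\mathbb{R}^3$ are merely \emph{constant}, which does not contradict $|U(0,0)|=1$; you need the Type I decay at $t\to-\infty$ to force the constant to be zero.

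Second, and more seriously, you miss the central reduction that makes the half-space Liouville step tractable. The constant vorticity direction, combined with the no-slip condition (which forces $\omega_U\cdot\vec e_3=0$ on $\partial\mathbb{R}^3_+$), implies $\omega_U\cdot\vec e_3\equiv 0$; after a rotation about $\vec e_3$ one has $\omega_U\cdot\vec e_2\equiv 0$ as well, so $U$ is a genuinely \emph{two-dimensional} flow in the $(x_2,x_3)$ variables. The Liouville theorem actually invoked in \cite{GM11,giga2014liouville} is for two-dimensional half-space mild bounded ancient solutions with signed vorticity under the Type I bound, and its proof is the hard analytic core of \cite{giga2014liouville} --- precisely the ``delicate analysis of the vorticity equation and its boundary condition'' the paper refers to. Your plan to ``apply the Constantin--Fefferman depletion on interior cutoffs and then invoke a Liouville theorem on the half-space'' is circular: the half-space Liouville theorem you want to invoke \emph{is} the content of the result you are trying to prove, and the Constantin--Fefferman cutoff argument does not supply it because of the vorticity trace on $\partial\mathbb{R}^3_+$, which is exactly the obstruction flagged in \textbf{(Q)}.
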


The proof in \cite{giga2014liouville} relies on two steps:
\begin{enumerate}
\item Using the assumption \eqref{vorticitydirection} and a choice of suitable rescalings for the equation (blow-up procedure) to reduce to a non zero two-dimensional nontrivial bounded limit solution with positive vorticity, which is defined on either the whole-space or the half-space. This limiting solution to the Navier-Stokes equations belongs to one of two  special classes called `whole-space mild bounded ancient solutions' or `half-space mild bounded ancient solutions'. We refer the reader to subsection \ref{sec.mbas} for the definitions and discussion of these objects. 
\item Obtaining a contradiction by showing the limit function must be zero. This is done in \cite{GM11} and \cite{giga2014liouville} by proving a Liouville theorem for two-dimensional  whole-space and half-space mild bounded ancient solutions having positive vorticity and satisfying the bound $$\sup_{t<0} (-t)^{\frac{1}{2}} \|u(\cdot,t)\|_{L_{\infty}}<\infty.$$
\end{enumerate}
The rescaling used in the first step is very specific. Let $t^{(k)}\uparrow T$, $y^{(k)}$ and $R_{k}\downarrow 0$ be selected such that
\begin{equation}\label{e.mbasrescaling}
\frac{1}{R^{(k)}}-1\leq |u(y^{(k)}, t^{(k)})|\leq\sup_{0\leq s\leq t^{(k)}}\|u(\cdot,s)\|_{L_{\infty}(\mathbb{R}^3)}:= \frac{1}{R^{(k)}}.
\end{equation}
Then $u^{(k)}(y,t)=R^{(k)}u\big(R^{(k)}(y+y^{(k)}), (R^{(k)})^2(t+t^{(k)})\big)$ produces either a whole-space or half-space mild bounded ancient solution $u^{(\infty)}$ in the limit. By \cite[Theorem 1.3]{barker2015ancient}, $u^{(\infty)}$ is smooth in space-time. To reach a contradiction, it is hence necessary to prove a strong fact about the limit, namely that the mild bounded ancient solution is zero. This is the purpose of the second step. For the case of the half-space, this step is nontrivial and involves a delicate analysis of the vorticity equation and its boundary condition.

The purpose of our work is to pave the way for a new method to prove regularity under the vorticity alignment and Type I condition. This new approach allows more flexibility in the rescaling procedure.
  To reach a contradiction, we do not need to show that the blow-up profile $u^{(\infty)}$ is zero.  Instead it suffices to prove that it is bounded at a specific point, which is much easier. The vorticity alignment condition serves the purpose of showing that $u^{(\infty)}$ is close to a two-dimensional flow in certain weak topologies.  This is sufficient to infer that $u^{(\infty)}$ is bounded at the desired point.

Our strategy  allows to prove regularity under the Type I condition, assuming vorticity alignment on concentrating sets, as in Theorem \ref{CAnconcentratingglobal}. 
 It also makes it possible to achieve local versions of the result of Giga, Hsu and Maekawa \cite[Theorem 1.3]{giga2014liouville}, and of our main result, Theorem \ref{CAnconcentratingglobal}. For such statements, we refer to Theorem \ref{theo.locgeomcrit} and Remark \ref{rem.vortalconsets} in Section \ref{sec.local}. As far as we know, the global or local statements with vorticity alignment on concentrating sets are new even for the case of the whole-space $\mathbb{R}^3$.

The keystone in our scheme is the stability of singularities for the Navier-Stokes equations. Stability of singularities and compactness arguments were also used in \cite[Theorem 4.1 and Remark 4.2]{AlbrittonBarkerBesov2018} and \cite[Theorem 3.1 and Lemma 3.3]{AlbrittonBarker2018local} to prove some regularity criteria. 
 The following lemma is a crucial tool. It is taken from the first author's Thesis \cite[Proposition 5.5]{barker2017uniqueness}; see also \cite[Proposition A.5]{AlbrittonBarker2018local} and \cite[Proposition 5.21]{pham2018} for subsequent genera\-lisations. The stability of singular points was first proven in the interior case by Rusin and \v{S}ver\'{a}k \cite{rusin2011minimal}. 
In the Lemma below and throughout this paper, we define $B^{+}(r):= B(r)\cap\mathbb{R}^3_{+}$ and $Q^{+}(1):= B^{+}(1)\times (-1,0)$. We refer the reader to subsection \ref{notationdef}, for the definition of suitable weak solutions in $Q^{+}(1)$. 
\begin{lemma}\label{stabilitysingularpointshalfspace}
Suppose $(u^{(k)},p^{(k)})$ 
 are suitable weak solutions to the Navier-Stokes equations in $Q^{+}(1)$.
Suppose that there exists a finite $M>0$ such that
\begin{equation}\label{seqbdd}
\sup_{k}\Big(\|u^{(k)}\|_{L_2^tL_{\infty}^x(Q^{+}(1))}+\|\nabla u^{(k)}\|_{L_{2}(Q^{+}(1))}+\|p^{(k)}\|_{L_{\frac{3}{2}}(Q^{+}(1))}\Big)=M<\infty.
\end{equation}
Furthermore assume that 
\begin{align}
\label{strongconverguk}
\lim_{k\rightarrow\infty}\|u^{(k)}-u^{(\infty)}\|_{L_{3}(Q^{+}(1))}=0,\\
\label{weakconvergpressure}
p^{(k)}\rightharpoonup p^{(\infty)}\,\,\textrm{in}\,\,L_{\frac 3 2}(Q^{+}(1)),\\
(0,0)\quad\mbox{is a singular point of}\quad u^{(k)}\quad \mbox{for all}\quad k=1,2\ldots\notag
\end{align} 
Then the above assumptions imply that $(u^{(\infty)},p^{(\infty)})$ is a suitable weak solution to the Navier-Stokes equations in $Q^{+}(1)$ with $(0,0)$ being a singular point of $u^{(\infty)}$.
\end{lemma}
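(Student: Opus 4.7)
My plan is to prove the lemma in two stages: verify that $(u^{(\infty)}, p^{(\infty)})$ is a suitable weak solution on $Q^{+}(1)$, and then argue that $(0,0)$ remains a singular point of $u^{(\infty)}$. For the first stage, the uniform bound \eqref{seqbdd} yields weak compactness of $\nabla u^{(k)}$ in $L_{2}(Q^{+}(1))$, and together with the strong convergence \eqref{strongconverguk} it upgrades to strong convergence of $u^{(k)}\otimes u^{(k)}$ in $L_{\frac{3}{2}}(Q^{+}(1))$. The distributional Navier--Stokes system then passes to the limit, with the pressure term handled by \eqref{weakconvergpressure}; divergence-freeness and the trace $u^{(\infty)}|_{x_{3}=0}=0$ survive because $u^{(k)}\rightharpoonup u^{(\infty)}$ weakly in $L_{2}(-1,0;H^{1}(B^{+}(1)))$. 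For the local energy inequality, I test against nonnegative $\varphi \in C^{\infty}_{c}$ supported up to the flat boundary, use weak lower semicontinuity for the dissipation and kinetic terms on the left (the latter aided by an Aubin--Lions argument giving convergence for a.e.\ time slice), and pass the cubic and pressure terms on the right via \eqref{strongconverguk} and \eqref{weakconvergpressure}.

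For the second stage, I argue by contradiction: assume $(0,0)$ is regular for $u^{(\infty)}$, so that $u^{(\infty)}\in L_{\infty}(Q^{+}(\rho))$ for some $\rho>0$, and aim to apply a boundary $\varepsilon$-regularity criterion of Seregin type to $(u^{(k)}, p^{(k)})$ at $(0,0)$ for $k$ large, which would force $(0,0)$ to be regular for $u^{(k)}$ and yield the desired contradiction. Such criteria say that smallness of a suitable scale-invariant quantity on $Q^{+}(r)$ implies boundedness of the solution near the space-time origin. Because $u^{(\infty)}$ is bounded near $(0,0)$, I may choose $r$ small enough that this scale-invariant quantity for $u^{(\infty)}$ on $Q^{+}(r)$ is below half of the $\varepsilon$-regularity threshold, and the remaining task is to transfer this smallness to $(u^{(k)}, p^{(k)})$.

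The main obstacle is precisely this transfer, because the pressure only converges weakly in $L_{\frac{3}{2}}$, and weak convergence does not give upper bounds on $L_{\frac{3}{2}}$ norms of $p^{(k)}$ from those of $p^{(\infty)}$. I would circumvent this via a local pressure decomposition $p^{(k)} = p_{1}^{(k)} + p_{2}^{(k)}$ on $Q^{+}(r)$, where $p_{1}^{(k)}$ solves an elliptic problem driven by $u^{(k)}\otimes u^{(k)}$ with boundary data adapted to the flat piece of $\partial \mathbb{R}^{3}_{+}$, while $p_{2}^{(k)}$ solves the homogeneous Stokes-harmonic problem. The uniform bound $\|u^{(k)}\|_{L_2^t L_{\infty}^x(Q^{+}(1))}\leq M$ in \eqref{seqbdd}, combined with \eqref{strongconverguk}, yields strong convergence of the source $u^{(k)}\otimes u^{(k)}$ in a space with enough integrability, hence strong $L_{\frac{3}{2}}$ convergence of $p_{1}^{(k)}$; the harmonic remainder $p_{2}^{(k)}$ gains interior and flat-boundary regularity from the homogeneous equation, so its weak convergence upgrades to strong convergence on the smaller cylinder $Q^{+}(r/2)$. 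Together with strong $L_{3}$ convergence of $u^{(k)}$ from \eqref{strongconverguk}, this verifies the $\varepsilon$-regularity hypothesis for $(u^{(k)}, p^{(k)})$ on $Q^{+}(r/2)$ for all sufficiently large $k$, producing the contradiction and completing the proof.
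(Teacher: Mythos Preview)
The paper does not give its own proof of this lemma; it is quoted from \cite[Proposition 5.5]{barker2017uniqueness} (see also \cite{AlbrittonBarker2018local,pham2018}). Your two-stage strategy --- passing to the limit in the suitable weak solution structure, then arguing by contradiction via a boundary $\varepsilon$-regularity criterion --- is exactly the standard approach in those references, and your Stage~1 is fine.

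In Stage~2 there is a gap in your treatment of the harmonic remainder $p_2^{(k)}$. The local regularity theory for the homogeneous Stokes system near a flat boundary (e.g.\ Proposition~\ref{localboundaryregstokes}) gives a \emph{spatial} gain: $\nabla p_2^{(k)}$ is bounded in $L_{3/2}^t L_s^x(Q^+(r/4))$ for any finite $s$, uniformly in $k$. But there is no corresponding gain of time regularity for the pressure, so boundedness in $L_{3/2}^t W^{1,s}_x$ together with weak $L_{3/2}^{t,x}$ convergence does not by itself upgrade to strong $L_{3/2}^{t,x}$ convergence --- Aubin--Lions is unavailable because you have no control on $\partial_t p_2^{(k)}$.

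The fix, which is what the cited references actually use, is not compactness but \emph{decay}: the spatial gain yields (via Poincar\'e and H\"older, cf.\ Remark~\ref{constantsscaling} and the derivation of \eqref{p2estsmallercylinder})
\[
D_{3/2}(p_2^{(k)},\tau r)\ \leq\ C\,\tau^{\alpha}\,\big[\text{quantities uniformly bounded in }k\big]
\]
for some $\alpha>0$. Combined with strong $L_{3/2}$ convergence of $p_1^{(k)}$ (which you correctly obtain), one iterates to find a scale $r_*$ at which $C(u^{(k)},r_*)+D_{3/2}(p^{(k)},r_*)<\varepsilon_0$ for all large $k$, and the boundary $\varepsilon$-regularity criterion then forces $(0,0)$ to be regular for $u^{(k)}$, yielding the contradiction. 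In short: replace ``weak convergence upgrades to strong'' for $p_2^{(k)}$ by ``the harmonic part is uniformly small at small scales''.
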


To show more precisely how our proof of Theorem \ref{CAnconcentratingglobal} works, let us assume for contradiction that $u$ is singular at the space-time point $(0,T)$. Let $R_{k}\downarrow 0$ be \emph{any} sequence. We rescale in the following way 
\begin{equation}\label{e.rescalinguk}
u^{(k)}(y,s):= R_{k} u(R_{k}y, T+ R_{k}^2 s)\,\,\,\,\textrm{and}\,\, p^{(k)}(y,s):=R_{k}^{2} p(R_{k}y, T+ R_{k}^2 s), 
\end{equation}
for all $y\in\R^3_+$ and $s\in (-T/R_k^{2},0)$.
   We show that the Type I condition \eqref{TypeIhalf} implies the boundedness of the following scale-invariant quantities
\begin{multline}\label{apriori1}
\sup_{0<r<1}\Bigg\{\frac{1}{r} \sup_{T-r^2<t<T}\int\limits_{B^{+}(r)} | u(x,t)|^{2} dx+\frac{1}{r} \int\limits_{T-r^2}^T\int\limits_{B^+(r)} |\nabla u|^{2} dxdt\\+\frac{1}{r^{2}}\int\limits_{T-r^2}^T\int\limits_{B^+(r)} |p-(p)_{B^{+}(r)}|^{\frac32} dxdt\Bigg\}<\infty.
\end{multline} 
Hence \eqref{seqbdd} holds for $(u^{(k)},p^{(k)})$ and we can apply Lemma \ref{stabilitysingularpointshalfspace}. While this implication is well-known in the whole-space, it is a main technical block in the half-space. One of our main achievements in this paper is to overcome this technical block. We explain this in more details in the paragraph \ref{sec.scaleinv} below.

Applying Lemma \ref{stabilitysingularpointshalfspace} gives us the following. Namely the blow-up profile $u^{(\infty)}$ (obtained from $u^{(k)}$ in the limit) is an ancient mild solution,\footnote{In particular, it satisfies the Duhamel formulation on any compact subinterval of $(-\infty,0)$. We will sometimes refer to this property as being a `mild solution in $\mathbb{R}^3_{+}\times (-\infty,0)$'.} which has a singularity at $(0,0)$, satisfies the Type I assumption and has bounded scaled energy. Moreover, the continuous alignment condition for the vorticity implies that the vorticity direction of $u^{(\infty)}$ is constant in a large ball.  We can then reach a contradiction by the following proposition for $\bar u=u^{(\infty)}$, proved in Section \ref{sec.four}.

\begin{proposition}\label{2Dreductionbycompactness}
Suppose that $(\bar{u}, \bar{p})$ is a mild solution
 to the Navier-Stokes equations on $\mathbb{R}^3_{+}\times (-\infty,0)$ with $\bar{u}|_{\partial \mathbb{R}^3_{+}}=0.$
Let $\bar{\omega}:= \nabla\times \bar{u}$ and suppose $(\bar{u}, \bar{p})$ is a suitable weak solution to the Navier-Stokes equations on $\mathbb{R}^3_{+}\times (-\infty,0)$.
Furthermore suppose that
\begin{equation}\label{TypeI}
|\bar{u}(x,t)|\leq \frac{M}{\sqrt{-t}}
\end{equation}
and
\begin{multline}\label{Morreybound}
\sup_{0<r<\infty}\Bigg\{\frac{1}{r} \sup_{-r^2<t<0}\int\limits_{B^{+}(r)} | \bar{u}(x,t)|^{2} dx+\frac{1}{r} \int\limits_{Q^+(r)} |\nabla \bar{u}|^{2} dxdt\\+\frac{1}{r^{2}}\int\limits_{Q^+(r)} |\bar{p}-(\bar{p}(\cdot,t))_{B^{+}(r)}|^{\frac32} dxdt\Bigg\}\leq M'.
\end{multline}
Under the above assumptions the following holds true. For all $M$ and $M'>0$, there exists $\gamma(M,M')>0$ such that if 
\begin{equation}\label{almost2D}
\bar{\omega}(x,-t_{0})\cdot \vec{e}_{i}=0\,\,\,\,\,\textrm{in}\,\,\,\,B^{+}(\gamma(M,M')\sqrt{t_{0}})\,\, \mbox{for}\footnote{Arguments from \cite{barker2015ancient} demonstrate that $\bar{u}$ is $C^{\infty}$ in space-time on $\overline{\mathbb{R}^{3}_{+}}\times (-\infty,0)$, hence this pointwise condition on the vorticity is well defined.}\ i=2,3\,\,\,\,\mbox{and}\,\,\,\mbox{some}\,\,\,\,t_{0}\in(0,\infty)\footnote{Here, $\vec{e}_{2}=(0,1,0)$ and $\vec{e}_{3}=(0,0,1).$}
\end{equation}
then $(x,t)=(0,0)$ is a regular point for $\bar{u}$.
\end{proposition}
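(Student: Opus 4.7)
The plan is to argue by contradiction, combining a blow-up and compactness argument with Lemma \ref{stabilitysingularpointshalfspace}, and then to reduce the resulting limit profile to a globally smooth $x_{1}$-invariant two-dimensional flow.

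Suppose the conclusion fails. Then for some $M, M' > 0$ one can select, for each $n$, a mild suitable weak solution $(\bar u^{(n)}, \bar p^{(n)})$ satisfying \eqref{TypeI} and \eqref{Morreybound}, a time $t_{0}^{(n)} > 0$, and a number $\gamma_{n}\to\infty$ such that $\bar\omega^{(n)}(\cdot, -t_{0}^{(n)})\cdot \vec e_{i}=0$ for $i=2,3$ in $B^{+}(\gamma_{n}\sqrt{t_{0}^{(n)}})$, yet $(0,0)$ is a singular point of $\bar u^{(n)}$. I rescale by $\lambda_{n}:=\sqrt{t_{0}^{(n)}}$:
\begin{equation*}
\tilde u^{(n)}(y,s) := \lambda_{n}\, \bar u^{(n)}(\lambda_{n} y, \lambda_{n}^{2}s), \qquad \tilde p^{(n)}(y,s) := \lambda_{n}^{2}\, \bar p^{(n)}(\lambda_{n}y, \lambda_{n}^{2}s).
\end{equation*}
Each $(\tilde u^{(n)}, \tilde p^{(n)})$ remains a mild suitable weak ancient solution with the same $M, M'$, with $(0,0)$ singular, and with $\tilde\omega^{(n)}(\cdot,-1)\cdot \vec e_{i}=0$ in $B^{+}(\gamma_{n})$. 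Combining the uniform Type I and Morrey bounds with standard parabolic compactness, I extract a subsequence with $\tilde u^{(n)}\to \tilde u^{(\infty)}$ in $L_{3,\mathrm{loc}}(\mathbb{R}^{3}_{+}\times(-\infty,0))$ and $\tilde p^{(n)}\rightharpoonup \tilde p^{(\infty)}$ in $L_{\frac{3}{2},\mathrm{loc}}$, the limit being a mild suitable weak ancient solution with the same bounds. Applying Lemma \ref{stabilitysingularpointshalfspace} on an exhausting family of cylinders around the origin, $(0,0)$ is singular for $\tilde u^{(\infty)}$. Away from $(0,0)$ each $\tilde u^{(n)}$ is uniformly smooth by Type I, so parabolic bootstrapping upgrades the convergence to $C^{k}_{\mathrm{loc}}$ on the slice $\{s=-1\}$; passing to the limit in the pointwise alignment yields $\tilde\omega^{(\infty)}(\cdot,-1)\cdot \vec e_{2} = \tilde\omega^{(\infty)}(\cdot,-1)\cdot \vec e_{3} = 0$ everywhere on $\mathbb{R}^{3}_{+}$.

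Now I exploit divergence-freeness through $\Delta \tilde u^{(\infty)} = -\nabla\times \tilde\omega^{(\infty)}$: at $s=-1$ the vanishing of the two transverse vorticity components forces $\Delta \tilde u_{1}^{(\infty)}(\cdot,-1)=0$. Since $\tilde u_{1}^{(\infty)}(\cdot,-1)$ vanishes on $\partial\mathbb{R}^{3}_{+}$ and is bounded by $M$, odd reflection across the boundary followed by the classical Liouville theorem gives $\tilde u_{1}^{(\infty)}(\cdot,-1)\equiv 0$. Feeding this back into the alignment identities $\partial_{i}\tilde u_{1}^{(\infty)} = \partial_{1}\tilde u_{i}^{(\infty)}$ at time $-1$ yields $\partial_{1}\tilde u_{i}^{(\infty)}(\cdot,-1) = 0$ for $i=2,3$, so that $\tilde u^{(\infty)}(\cdot,-1) = (0, U_{2}(x_{2},x_{3}), U_{3}(x_{2},x_{3}))$ is a smooth bounded planar divergence-free flow in the half-plane $\{x_{3}>0\}$ vanishing on $\{x_{3}=0\}$. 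Let $(V_{2}, V_{3})$ be the unique globally smooth 2D no-slip Navier-Stokes solution with this data at time $s=-1$; the $x_{1}$-invariant extension $V(x,s) := (0, V_{2}(x_{2},x_{3},s), V_{3}(x_{2},x_{3},s))$ is a smooth mild solution of 3D Navier-Stokes on $\mathbb{R}^{3}_{+}\times[-1,\infty)$ with the same initial data as $\tilde u^{(\infty)}$ at $s=-1$. A weak-strong uniqueness argument on $(-1,0)$, controlled by the Type I and Morrey bounds, then forces $\tilde u^{(\infty)}=V$ on that interval, so $\tilde u^{(\infty)}$ is smooth at $(0,0)$, contradicting the singularity established above.

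The main obstacle I expect is the weak-strong uniqueness in the last step: the limit $\tilde u^{(\infty)}$ is controlled only by mild-ancient and Morrey-type information, whereas $V$ is classical and bounded on each compact subinterval of $(-1,0)$; identifying the two requires a weak-strong uniqueness statement for suitable weak solutions in the half-space adapted to this scaling class, most likely obtained via the local energy inequality combined with the Type I bound to absorb the $L_{\infty}$ norm on a short interval, followed by a continuation argument covering all of $(-1,0)$. A secondary subtlety is securing the $C^{k}$-upgrade of the convergence on $\{s=-1\}$ needed to transfer the pointwise vorticity alignment to the limit; this relies on uniform smoothness of the $\tilde u^{(n)}$ away from the origin, itself a direct consequence of the Type I bound.
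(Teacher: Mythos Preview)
Your approach is essentially the same as the paper's: contradiction, rescaling to normalise $t_0=1$, compactness with persistence of singularities via Lemma \ref{stabilitysingularpointshalfspace}, Liouville for the bounded harmonic component $\tilde u_1^{(\infty)}(\cdot,-1)$, and reduction to an $x_1$-invariant two-dimensional flow.

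The only difference worth noting is in the final identification step. You flag weak-strong uniqueness as the main obstacle, but the paper bypasses it entirely: since you already establish that the limit $\tilde u^{(\infty)}$ is a \emph{mild} solution in $L_\infty$, and the $x_1$-invariant lift $V$ of the two-dimensional mild solution is itself a three-dimensional $L_\infty$ mild solution with the same data at $s=-1$, the uniqueness of $L_\infty$ mild solutions in the half-space (\cite[Theorem 1.3]{baejin}) directly gives $\tilde u^{(\infty)}=V$ on $(-1,0)$. No weak-strong argument or local energy inequality manipulation is needed. For the global two-dimensional smoothness of $V$, the paper also does not rely on boundedness of the initial data alone: it first uses the Morrey bound \eqref{Morreybound} together with the $x_1$-invariance to deduce $\tilde u^{(\infty)}(\cdot,-1)\in L_2(\mathbb{R}^2_+)$, so that the two-dimensional flow is a Leray--Hopf weak solution and hence globally smooth by the classical 2D theory. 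This makes the endgame cleaner and removes both obstacles you identified.
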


The main flexibility of our method lies in the fact that we can use \emph{any} sequence $R^{(k)}\downarrow 0$ in the rescaling procedure. Therefore, we can tune the sequence to our needs. In the case of the whole-space, we can take advantage of this versatility on time slices of the cone (Theorem \ref{CAnconcentratingglobalws} in Section \ref{improveremark}) and not the whole-cone as in Theorem \ref{CAnconcentratingglobal}.

\begin{remark}[alternative argument suggested by Seregin]\label{Seregincomment}
Gregory Seregin suggested to the authors that bounded scaled energy quantities for $u$ such as \eqref{apriori1} could be used to reprove the theorem of Giga, Hsu and Maekawa \cite[Theorem 1.3]{giga2014liouville} in the following way. His idea is to show that the existence of a singular solution  satisfying \eqref{vorticitydirection} and \eqref{apriori1}\footnote{Technically, \eqref{TypeIdef} must be assumed instead of \eqref{apriori1}. For the purpose of this discussion we overlook this point.} implies the existence of either a non-trivial two-dimensional whole-space or half-space mild bounded ancient solution $\bar u$ in $\Omega \times (-\infty,0)$ ($\Omega=\mathbb{R}^3$ or $\mathbb{R}^3_{+}$) satisfying 
\begin{equation}\label{Morreybd}\sup_{0<r<\infty} \sup_{-r^2<t<0}\frac{1}{r}\int\limits_{B(r)\cap \Omega} | \bar{u}(x_{1},x_{3},t)|^{2} dx_{1}dx_{2} dx_{3}<\infty.
\end{equation}
This then implies that either $\bar u\in L^{\infty}(-\infty,0; L^{2}(\mathbb{R}^2))$ or $L^{\infty}(-\infty,0; L^{2}(\mathbb{R}^2_{+}))$. Such a $\bar{u}$ then must be zero by applying a Liouville theorem proven in \cite{koch2009liouville} (for the whole-space case) and  a Liouville theorem by Seregin in \cite{seregin2015liouville} (for the half-space case). This contradiction then implies regularity under the assumptions \eqref{vorticitydirection} and \eqref{apriori1}.

Our approach avoids resorting to a Liouville theorem. Under the `ODE blow-up rate' assumption \eqref{TypeIhalf}, we gain additional information about the blow-up profile. Namely, $$|u^{(\infty)}(x,t)|\leq \frac{M}{\sqrt{-t}}.$$
This plays an important role in our proof. Specifically,  it ensures that for $t_{0}>0$, $u^{(\infty)}$ is the unique strong solution to the Navier-Stokes equations on $\mathbb{R}^3_{+}\times (-t_{0},0)$ with initial data $u^{(\infty)}(\cdot,-t_{0})$. It is not known if such considerations apply to the case when $u^{(\infty)}$ satisfies \eqref{Morreybd}. 
\end{remark}

We now explain how we get over the main technical block in this argument, namely to prove that the Type I assumption \eqref{TypeIhalf} implies the boundedness of scaled energies \eqref{apriori1}.

\subsection{Scale-invariant estimates}
\label{sec.scaleinv}

Let $\Omega$ be a domain in $\R^3$. We define the following scale-invariant quantities, which will be used throughout our work: for $x\in\bar\Omega$, $r\in(0,\infty)$
\begin{align}
\label{Adefx}
A(u,r;x,t):=\ &\sup_{t-r^2<s<t}\frac{1}{r}\int\limits_{B(x,r)\cap\Omega} | u(y,s)|^{2} dy,\\
\label{Edefx}
E(u,r;x,t):=\ &\frac{1}{r} \int\limits_{t-r^2}^t\int\limits_{B(x,r)\cap\Omega} |\nabla u|^{2} dyds,\\
\label{scaledpressurelambda32x}
D_{\frac32}(p,r;x,t):=\ &\frac{1}{r^{2}}\int\limits_{t-r^2}^t\int\limits_{B(x,r)\cap\Omega} |p-(p(\cdot,s))_{B(x,r)\cap\Omega}|^{\frac32} dyds.
\end{align}
Here and throughout this paper, $$(f)_{\Omega}:= \frac{1}{ |\Omega|} \int\limits_{\Omega} f(y) dy.$$ 
Below, we will often take $(x,t)=(0,0)$, as well as $\Omega=B^+(1)$, $B(1)$, $\R^3_+$, or $\mathbb{R}^3$. 
In this case, we have the following lighter notation:
\begin{equation}
\label{Adef}
A(u,r):=A(u,r;0,0),\quad E(u,r):=E(u,r;0,0),\quad D_{\frac32}(p,r;x,t):=D_{\frac32}(p,r;0,0).
\end{equation}
Away from boundaries it is known that for a suitable weak solution\footnote{We use the definition of `suitable weak solution' in $Q(1)$ given in \cite[Definition 1]{Seregin2018}. For the definition of suitable weak solutions in $Q^{+}(1)$, we refer the reader to subsection \ref{notationdef}.} $(u,p)$ of the Navier-Stokes equations in $Q(1)$, the Type I condition 
\begin{equation}\label{e.typeIR3}
|u(x,t)|\leq \frac{M}{\sqrt{-t}}\,\,\,\,\,\textrm{in}\,\,\,\,Q(1)
\end{equation}
implies
$$
\sup_{0<r<1}\big\{A(u,r)+E(u,r)+D_{\frac32}(p,r)\big\}<\infty.
$$
This was established by Seregin and Zajaczkowski in \cite{SerZajac}; see also Seregin and {\v{S}}ver{\'a}k \cite{Seregin2018} (statement and proof) and \cite[Lemma 2.5]{albritton2018local} (statement only). We show that this implication is also true up to a flat boundary. The following theorem is one of our main contributions in this paper, and the key technical tool to prove Theorem \ref{CAnconcentratingglobal} as well as its local versions.

\begin{theorem}\label{TypeIimpliesscaled}
Suppose that $(u,p)$ is a suitable weak solution of the Navier-Stokes equations in $Q^{+}(1)$ and satisfies the Type I bound 
\begin{equation}\label{e.typeIR3+loc}
|u(x,t)|\leq \frac{M}{\sqrt{-t}}\,\,\,\,\,\textrm{in}\,\,\,\,Q^+(1)
\end{equation}
Then one infers that
\begin{equation}\label{boundedscaledquantities}
\sup_{0<r<1}\big\{E(u,r)+A(u,r)+ D_{\frac{3}{2}}(p,r)\big\}\leq M'\big(M, A(u,1), E(u,1), D_{\frac{3}{2}}(p,1)\big).
\end{equation}
\end{theorem}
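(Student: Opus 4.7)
The plan is to adapt the interior argument of Seregin--Zajaczkowski \cite{SerZajac} to the half-space with no-slip boundary condition, the main new difficulty being a suitable pressure decomposition near the flat part of $\partial\mathbb{R}^{3}_{+}$. First, starting from the local energy inequality satisfied by the suitable weak solution in $Q^{+}(1)$, applied with a smooth space-time cutoff adapted to $Q^{+}(r/2)\subset Q^{+}(r)$ for $r\in(0,1)$ (and noting that the no-slip condition eliminates the contributions from the flat boundary), one obtains
$$A(u,r/2)+E(u,r/2) \leq C\,\Bigl[A(u,r)+C(u,r)+C(u,r)^{1/3}D_{\frac{3}{2}}(p,r)^{2/3}\Bigr],$$
where $C(u,r):=r^{-2}\int_{Q^{+}(r)}|u|^{3}\,\dd x\,\dd t$.

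Second, the Type I hypothesis enters crucially to dispose of the cubic term. Using
$$\int_{Q^{+}(r)} |u|^{3}\,\dd x\, \dd t \leq \int_{-r^{2}}^{0} \|u(\cdot,t)\|_{L^{\infty}(B^{+}(r))}\int_{B^{+}(r)} |u(x,t)|^{2}\,\dd x\, \dd t,$$
invoking $\|u(\cdot,t)\|_{L^{\infty}}\leq M/\sqrt{-t}$, and the identities $\int_{-r^{2}}^{0}(-t)^{-1/2}\,\dd t = 2r$ and $\sup_{t}\int_{B^{+}(r)}|u|^{2}\,\dd x\leq r A(u,r)$, yields the clean bound $C(u,r)\leq 2M\,A(u,r)$. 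Combined with Young's inequality, the previous energy estimate reduces to a linear interpolation between $A$, $E$ and $D_{\frac{3}{2}}(p,\cdot)$ with an absorbable small factor in front of $D_{\frac{3}{2}}(p,r)$.

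Third, I decompose the pressure on $B^{+}(r)$ using the half-space Green's function tailored to no-slip: $p = p^{(1)}+p^{(2)}$, where $p^{(1)}$ is the Calder\'on--Zygmund-type bulk contribution generated by a smoothly truncated $u\otimes u$ (bounded, after even reflection across $\{x_{3}=0\}$, by $\|p^{(1)}\|_{L^{3/2}}\lesssim \|u\|_{L^{3}}^{2}$, so that $D_{\frac{3}{2}}(p^{(1)},r)\lesssim C(u,r)\leq 2MA(u,r)$), and $p^{(2)}:=p-p^{(1)}$ solves a homogeneous Stokes problem in a neighbourhood of the origin and inherits interior mean-value estimates yielding
$$D_{\frac{3}{2}}(p^{(2)},\theta r) \leq C\,\theta^{\alpha}\,D_{\frac{3}{2}}(p^{(2)},r)$$
for all $\theta\in(0,1/2]$ and some $\alpha>0$.

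Finally, combining the three steps and setting $\Phi(r):=A(u,r)+E(u,r)+D_{\frac{3}{2}}(p,r)$, one arrives at $\Phi(\theta r)\leq \tfrac{1}{2}\Phi(r)+K(M)$ by choosing $\theta=\theta(M)\in(0,1/2]$ sufficiently small. Iterating from $r=1$ gives $\Phi(\theta^{k})\leq 2^{-k}\Phi(1)+2K(M)$, and interpolating between consecutive dyadic scales delivers the uniform bound \eqref{boundedscaledquantities}. The main obstacle is the third step: unlike in the whole-space setting, the pressure on the half-space with no-slip is not given by a pure Calder\'on--Zygmund operator applied to $u\otimes u$, so a genuine boundary correction is needed whose contribution to $D_{\frac{3}{2}}(p,r)$ must be controlled in the suitable-weak framework. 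This is precisely the purpose of the scaled Morrey estimates advertised in the introduction as being of independent interest.
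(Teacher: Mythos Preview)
Your proposal has the right overall architecture (local energy inequality, Type~I to control $C(u,r)$, pressure decomposition, iteration), but there are two genuine gaps, and one of them is fatal even in the whole-space case.

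\textbf{Gap 1 (Step 2, the exponent is too large).} Your bound $C(u,r)\leq 2M\,A(u,r)$ is correct, but it has exponent $1$ in $A$. Feeding it into your local energy inequality gives
\[
A(u,r/2)+E(u,r/2)\leq C\big[(1+2M)A(u,r)+C(u,r)^{1/3}D_{\frac32}(p,r)^{2/3}\big],
\]
and the coefficient $C(1+2M)$ in front of $A(u,r)$ is large. There is no choice of $\theta$ that makes it $\leq\tfrac12$: the constant coming from the cutoff only grows as $\theta\to 0$. Likewise, in Step~3 your bound $D_{\frac32}(p^{(1)},r)\lesssim C(u,r)\leq 2M A(u,r)$ produces a second large linear-in-$A$ term. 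So the claimed inequality $\Phi(\theta r)\leq\tfrac12\Phi(r)+K(M)$ does not follow. This is exactly the issue the paper flags in the introduction (see \eqref{e.defkappa}): one needs the \emph{total} power of energy quantities to be \emph{strictly less than~$1$}. The paper's Lemma~\ref{interpolationwithTypeI} achieves this by interpolating $L^t_{\hat l}L^x_{\hat s}$ between $L^\infty_tL^2_x$ and a space $L^t_{2/(1+\delta)}L^x_S$ on which the Type~I bound alone controls $u$; this yields $C(u,r)\leq C(M)A(u,r)^{(\frac12)_+}$, and only then does Young's inequality give $\tfrac14(A+E+D)+C(M)$ as in \eqref{localenergyestyoungs}.

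\textbf{Gap 2 (Step 3, the pressure decomposition is not justified).} The ``even reflection'' trick does not produce the pressure of the no-slip Stokes problem in $\R^3_+$; see Koch--Solonnikov \cite{KochSolonnikov}, recalled in the paper, for source terms $\nabla\cdot F$ with $F\in L^{t,x}_q$ for which the half-space pressure is not even time-integrable. So your bound $\|p^{(1)}\|_{L^{3/2}}\lesssim\|u\|_{L^3}^2$ is not available. The paper's substitute is the Chang--Kang type estimate of Proposition~\ref{prop.changKang}: the harmonic part of the pressure is controlled by a \emph{fractional} norm $\|F\|_{L_p}^{1-\kappa}\|\nabla F\|_{L_p}^{\kappa}$ with $\kappa>\tfrac1p$, which is exactly subcritical enough (via Lemma~\ref{fractionalsobolevwithTypeI}) to produce the sublinear exponents $\theta,\mu<1$ in the pressure decay estimate \eqref{pressureestimatessmallscales}. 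You acknowledge the obstacle in your last paragraph, but you do not provide a replacement; without it, Step~3 is an assertion rather than an argument.

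In short: the missing idea is that the Type~I bound must be used \emph{via interpolation} to produce strictly sublinear powers of the scaled energies, both in the velocity estimate and---through the fractional pressure bound of Proposition~\ref{prop.changKang}---in the pressure estimate. Your direct bound $C\leq 2MA$ is the natural first try, but it is exactly the borderline $\kappa=1$ that the iteration cannot close.
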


Let us now discuss a generalized notion of `Type I' blow-up. Suppose that for $\Omega=\mathbb{R}^3$ or $\mathbb{R}^3_{+}$ there is a suitable weak solution $u$ that loses smoothness at $T$. We then call\footnote{This definition was given in \cite{albritton2018local} for local solutions defined in a ball. See also \cite{seregin2019type} for a related definition for local solutions with no-slip on the flat part of the boundary.}  the singular point $(x_{0},T)\in \bar{\Omega}\times \{T\}$ a Type I singularity if there exists $R>0$ such that
\begin{align}
&\sup_{\substack{(B(x,r)\cap\Omega)\times (t-r^2,t)\\
\subset (B(x_{0},R)\cap\Omega) \times (T-R^2,T)}}\big(A(u,r;x,t)+E(u,r;x,t)+D_\frac32(p,r;x,t)\big)
<\infty.
\label{TypeIdef}
\end{align}
For the case of a suitable solution $u$ defined on the whole-space, it is discussed in \cite{albritton2018local} that the definition of Type I singularities given by \eqref{TypeIdef} is very natural and includes most other popular notions of `Type I' used in the literature. For the case of a suitable solution $u:\mathbb{R}^3_{+}\times (0,\infty)\rightarrow \mathbb{R}^3$ in the half-space with no-slip boundary condition, it was shown in \cite{Mik09} and \cite{seregin2019type} that the definition of Type I singularity \eqref{TypeIdef} is implied by 
 $$|u(x,t)|\leq \frac{M}{|x|}.$$
 However, it was previously not understood if, for solutions in the \emph{half-space} with no-slip boundary conditions, \eqref{TypeIdef} was consistent with the `ODE blow-up rate' notion of Type I singularities \eqref{e.typeIR3} and \eqref{e.typeIR3+loc} commonly found in the literature (see for instance \cite{GK85}). 
 Theorem \ref{TypeIimpliesscaled} fills this gap and demonstrates that, after all, \eqref{TypeIdef} is a reasonable notion of Type I singularity for suitable solutions in the half-space with no-slip boundary condition. 

The importance of Theorem \ref{TypeIimpliesscaled} lies in the fact that it links the natural notion of Type I blow-up \eqref{e.typeIR3+loc} to a workable notion of Type I. Indeed it turns out that the scale-invariant conditions \eqref{boundedscaledquantities} or \eqref{TypeIdef} are exactly what is needed to prove a number of results. This can be seen in particular in three situations:
\begin{enumerate}
\item The scale-invariant bound \eqref{boundedscaledquantities} ensures that the rescaled solutions $(u^{(k)},p^{(k)})$ defined by \eqref{e.rescalinguk} satisfy the uniform bound \eqref{seqbdd} required for the persistence of singularities.
\item The type I bound \eqref{TypeIdef} ensures that a mild bounded ancient solution originating from $(u,p)$ has some sort of decay at space infinity. This point is discussed in more details in Section \ref{sec.mbas} below.
\item In the paper \cite{BP18} on concentration phenomena for Type I blow-up solutions of the Navier-Stokes equations, a scale-critical condition such as \eqref{TypeIdef} was used to provide a control in $L_{2,uloc}(\R^3)$ of a rescaled solution.
\end{enumerate}

In order to prove Theorem \ref{TypeIimpliesscaled} a number of innovations are needed. 
Indeed, the case near a flat boundary is considerably more intricate than the interior case.
 The proof relies on the local energy inequality 
\eqref{localenergyequalityboundary}. The main difficulty lies in the estimate for the pressure term
\begin{equation}\label{e.estpressureQ(r)}
I(u,p,r,\Omega):=\int\limits_{-r^2}^0\int\limits_{B(r)\cap\Omega}pu\cdot\nabla\varphi dxds
\end{equation}
where $\varphi \in C_{0}^{\infty}(B(r)\times (-r^2,\infty))$ is a positive test function and $\Omega=B^{+}(1)$, see \eqref{localenergyequalityboundary}.

To highlight the difficulties concerned with the flat boundary, let us first discuss the proof of the simpler interior case of Theorem \ref{TypeIimpliesscaled} \cite{SerZajac,Seregin2018}.  
For the Navier-Stokes equations in the whole space (with sufficient decay),  the Calder\'{o}n Zygmund theory gives that the pressure $p$ can be estimated directly in terms of $u\otimes u$, which we write in a formal way $p\simeq u\otimes u$. 
 For the interior case of a suitable weak solution in $Q(1)$, this fact implies the decay estimate
\begin{equation}\label{e.estpressurewholespace}
D_{\frac{3}{2}}(p,\tau r)\leq C\big(\tau D_{\frac{3}{2}}(p,r)+\tau^{-2}C(u,r)\big),
\end{equation}
for a given $\tau\in (0,1)$ and all $r\in(0,1]$. Here, 
\begin{equation}\label{e.Cr}
C(u,r):=\frac1{r^2}\int_{Q(r)}|u|^3dxds
\end{equation}
is a scale-invariant quantity. To prove the interior version of Theorem \ref{TypeIimpliesscaled}, it then suffices to bound the right hand side of \eqref{e.Cr}. The game to play is to combine energy type quantities and the Type I bound \eqref{e.typeIR3} to obtain
\begin{align}
\label{e.estC(r)}
C(u,r)\lesssim_M\ & A^\mu(u,r)\Biggl(\frac1{r^3}\int\limits_{Q(r)}|u|^2dxds+\frac1{r}\int\limits_{Q(r)}|\nabla u|^2dxds\Biggr)^\theta \\
&\Downarrow\notag\\
C(u,r)+D_{\frac{3}{2}}(p,\tau r)\lesssim_M\ &\ep\Biggl(\frac1{r^3}\int\limits_{Q(r)}|u|^2dxds+\frac1{r}\int\limits_{Q(r)}|\nabla u|^2dxds\Biggr)\label{e.estCD(r)}\\
&\qquad\qquad+c(M,\ep, \kappa,\tau)+C\tau D_{\frac{3}{2}}(p,r)\notag
\end{align}
with 
\begin{equation}\label{e.defkappa}
0<\mu,\, \theta,\qquad \kappa:=\mu+\theta<1,\qquad \ep\in(0,1)\quad\mbox{and}\quad c(M,\ep,\kappa,\tau)>0. 
\end{equation}
This estimate is based on interpolation. 
Figure \ref{fig.interpol} shows how to obtain it in a clear way.  
Estimate \eqref{e.estC(r)} is critical to get the decay estimate for the energy for $\ep$ and $\tau$ small enough; see \cite[estimate (54)]{Seregin2018}. 

\begin{figure}[t]\label{fig.interpol}
\includegraphics[width=9.45cm,height=8cm]{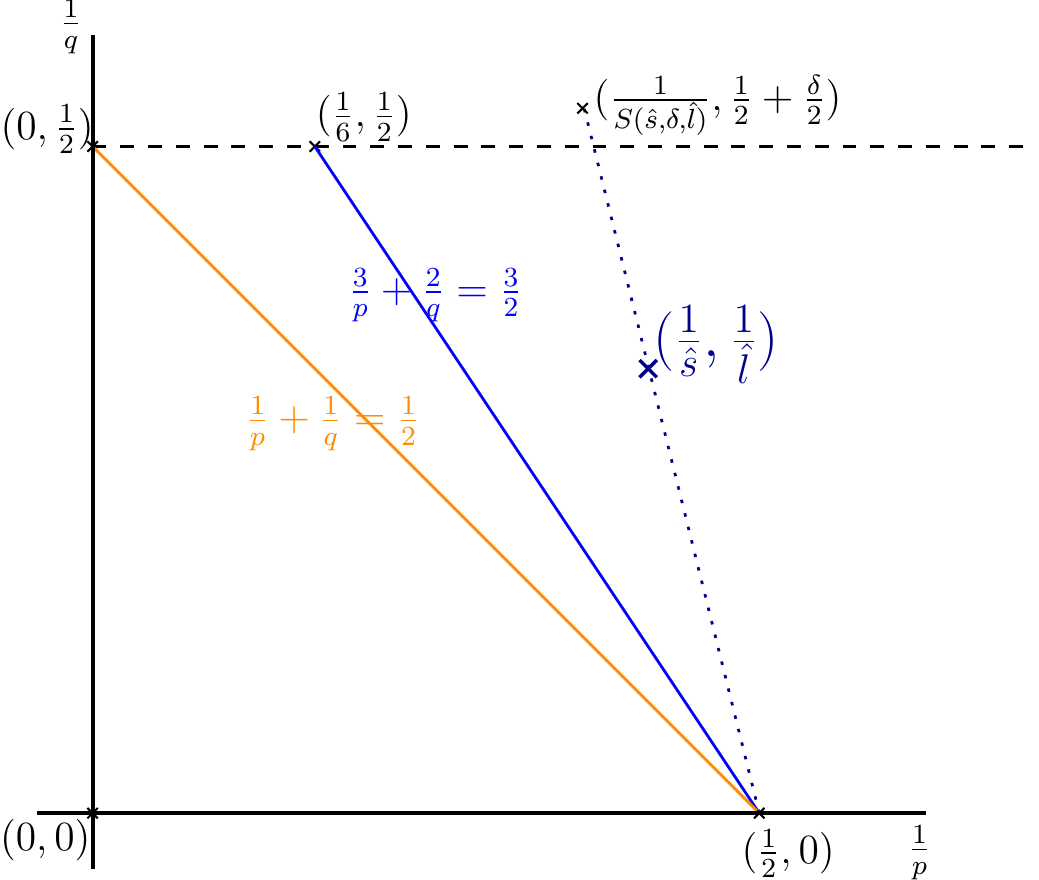}
\caption{Interpolation for $L_{\hat l}^tL_{\hat s}^x(Q^+(1))$}
\end{figure}

In the case of the half-space, on the contrary, we cannot estimate $p$ in terms of $u\otimes u$; in short $p\not\simeq u\otimes u$. This issue is brought up in particular in the work of Koch and Solonnikov \cite[Theorem 1.3]{KochSolonnikov}. Considering the unsteady Stokes system in the half-space with no-slip boundary conditions, they prove that there are divergence form source terms $\nabla\cdot F$, with $F\in L_q^{t,x}$, for which the pressure is not integrable in time. A possible alternative is to use maximal regularity for the Stokes system \cite{GS91}, i.e. $\nabla p\simeq u\cdot\nabla u$.  Such an estimate was  used for the local boundary regularity theory of the Navier-Stokes theory by Seregin in \cite{Ser02}. It remains a cornerstone for estimating the pressure locally near the boundary. Formally applying the maximal regularity estimate, together with the Poincar\'e-Sobolev and H\"{o}lder inequalities, the estimate of \eqref{e.estpressureQ(r)} (with $r=1$) turns into\begin{equation*}
\|u\|_{L^{t}_{l}L^{x}_{s}(Q^{+}(1))}\|p-(p)_{B^{+}(1)}(t)\|_{L^{t}_{l'}L^{x}_{s'}(Q^{+}(1))}\leq \|u\|_{L^{t}_{l}L^{x}_{s}(Q^{+}(1))}\|u\|_{L^{t}_{\hat{l}}L^{x}_{\hat{s}}(Q^{+}(1))}\|\nabla u\|_{L_{2}(Q^{+}(1))}
\end{equation*}
with 
\begin{equation}\label{holderexponent}
\frac{1}{l}+\frac{1}{\hat{l}}=\frac{1}{2}\,\,\,\,\textrm{and}\,\,\,\,\,\frac{1}{s}+\frac{1}{\hat{s}}=\frac{5}{6}.
\end{equation}
The presence of the gradient of $u$ makes this quantity `too supercritical'. Using \eqref{TypeIhalf} and sharp interpolation arguments (see Lemma \ref{interpolationwithTypeI}) give
\begin{equation}\label{optexponentinterpolate}
\|u\|_{L^{t}_{l}L^{x}_{s}(Q^{+}(1))}\|u\|_{L^{t}_{\hat{l}}L^{x}_{\hat{s}}(Q^{+}(1))}\lesssim_M (A(u,1))^{\lambda}
\end{equation}
with any $\lambda$ \textit{slightly} greater than $1-(\frac{1}{l}+\frac{1}{\hat{l}})=\frac{1}{2}$.
 Thus, any attempt to get an estimate similar to \eqref{e.estCD(r)} (for $\Omega=B^{+}(1)$)  by using maximal regularity estimates for the pressure and the Type I condition \eqref{TypeIhalf} will not work. In particular such a strategy will always produce a total power of energy quantities $\kappa\geq1$, see \eqref{e.defkappa}. 
The analysis of Figure \ref{fig.interpol} enables to understand the limitation on the exponent $\kappa$. Interpolating the $L_4^{t,x}$ norm of $u$ between the energy, i.e. the point $L_\infty^tL_2^x$, and the Type I condition, i.e. the region $L_q^tL_p^x$, $q<2$, yields a power of the energy too large. 

We overcome this difficulty in the half-space thanks to a new estimate on the pressure. The idea is to have an intermediate estimate between $p\simeq u\otimes u$, which is known to be false (see above the comment about \cite{KochSolonnikov}), and the maximal regularity estimate $\nabla p\simeq u\cdot\nabla u$, which is too supercritical. Such an estimate was discovered by Chang and Kang \cite[Theorem 1.2]{CK18}. Such `fractional' pressure estimate can be formally written as $\nabla^\beta p\simeq \nabla^\beta(u\otimes u)$, for $1\geq \beta$ sufficiently close to $1$. We give a new self-contained and straightforward proof of a similar estimate, see Proposition \ref{prop.changKang}. This new proof is based on the pressure formulas for the half-space established in \cite{MMP17a,MMP17b}. With the help of the `fractional' pressure estimate, we are able to implement the strategy of \cite{SerZajac,Seregin2018}. We believe that this is the first instance where the estimate of Chang and Kang \cite{CK18} (see also Proposition \ref{prop.changKang} below) is used and is really pivotal.

\subsection{Type I singularities, Morrey bounds and ancient solutions}
\label{sec.mbas}

{Let us now explain the independent interest of Theorem \ref{TypeIimpliesscaled} in clarifying the relationship between Type I singularities (see \eqref{TypeIdef} for a definition) 
of the Navier-Stokes equations in $\mathbb{R}^3_{+}$ with no-slip boundary condition and `whole-space mild bounded ancient solutions'  
or `half-space mild bounded ancient solutions'. These solutions form a special subclass of bounded solutions to Navier-Stokes equations on $\mathbb{R}^3\times (-\infty,0)$ and $\mathbb{R}^3_{+}\times (-\infty,0)$. For a definition, we refer to \cite{koch2009liouville} for the whole-space and \cite{seregin2014rescalings} and \cite{barker2015ancient} for the half-space.}

Suppose that $u: \mathbb{R}^3\times (0,\infty)\rightarrow \mathbb{R}^3$ is a solution that first loses smoothness at $T>0$. Let $t^{(k)}\uparrow T$, $y^{(k)}$ and $R_{k}\downarrow 0$ be selected as in \eqref{e.mbasrescaling}. 
In \cite{koch2009liouville}, it was shown that in such circumstances, a `zooming-in' procedure of the form
$$u^{(k)}(y,t)=R^{(k)}u\big(R^{(k)}(y+y^{(k)}), (R^{(k)})^2(t+t^{(k)})\big)$$
and
$$p^{(k)}(y,t)=(R^{(k)})^2 p\big(R^{(k)}(y+y^{(k)}), (R^{(k)})^2(t+t^{(k)})\big)$$
produces a non-zero whole-space mild bounded ancient solution; see also  \cite{sersve2009type} for the localised version of this statement. For the case that $u:\mathbb{R}^3_{+}\times (0,\infty)\rightarrow \mathbb{R}^3$ is a solution to the Navier-Stokes equations with no-slip boundary condition that first loses smoothness at $T>0$, the `zooming in' procedure produces either non-zero whole-space or non-zero half-space mild bounded ancient solutions; see \cite{seregin2014rescalings}, as well as \cite{AlbrittonBarker2018local} for the localised statement. In \cite{koch2009liouville} and \cite{seregin2014rescalings} the following Liouville conjectures were made
\begin{description}
\item[(C1)] Any whole-space mild bounded ancient solutions is a constant, see \cite{koch2009liouville}.\\
\item[(C2)] Any half-space mild bounded ancient solution is zero, see \cite{seregin2014rescalings}.
\end{description}
At the moment, these Liouville conjectures are only known to hold in special situations; see \cite{koch2009liouville}, \cite{sersve2009type}, \cite{albritton2018local} for special cases where \textbf{(C1)} holds and see \cite{seregin2015liouville} and \cite{giga2014liouville} for two-dimensional cases where \textbf{(C2)} holds.

When one assumes only `Type I' singularities at $T$ in the sense of \eqref{TypeIdef}, one gains extra information regar\-ding 
the whole-space (resp. half-space) mild bounded ancient solutions $u^{(\infty)}$ arising from $u$. In parti\-cular, one gets 
 \begin{equation}\label{globalMorreybound}
 \sup_{(x,t)\in \bar{\Omega}\times (-\infty,0], r>0}\big(A(u^{(\infty)},r;x,t)+E(u^{(\infty)},r;x,t)+D_\frac32(p^{(\infty)},r;x,t)\big)
  <\infty.
 \end{equation}
 Observe that if $u^{(\infty)}$ is a constant satisfying \eqref{globalMorreybound} then it must be zero. Consequently the validity of the Liouville conjectures \textbf{(C1)-(C2)} would imply that if one has a Type I singularity at $T$, the resulting mild bounded ancient solution must be zero. This would contradict with the fact that the mild bounded ancient solutions obtained from rescalings of a singular solution must be non-zero. 
In particular,
 \begin{itemize}
 \item[] \textbf{Validity of (C1) and (C2) rules out Type I singularities for finite energy solutions in $\mathbb{R}^3\times (0,\infty)$ or $\mathbb{R}^3_{+}\times (0,\infty)$ with no-slip boundary conditions.}
 \end{itemize}
 We \textit{strongly} emphasize that the main advantage of (and reason for) definition \eqref{TypeIdef} for Type I singularities is that condition \eqref{globalMorreybound} enables the reverse implication to be established.  Specifically, 
 \begin{itemize}
 \item[] \textbf{Failure of \textbf{(C1)} or \textbf{(C2)} for mild bounded ancient solutions satisfying \eqref{globalMorreybound} implies Type I singularity formation.}
 \end{itemize}
 This was shown in \cite{albritton2018local} for the whole-space case using the `persistence of singularities' established by Rusin and \v{S}ver\'{a}k in \cite{rusin2011minimal}. Subsequently, this was established for the half-space case by Seregin in \cite{seregin2019type}. As we mentioned before, bounds such as \eqref{globalMorreybound} are crucial for applying persistence of singularities arguments to rescalings of $(u^{(\infty)},p^{(\infty)})$.

\subsection{Outline of our paper}

In Section \ref{sec.ck} we revisit the Chang and Kang estimate. We prove Proposition \ref{prop.changKang}, which is our main tool to prove that the Type I condition \eqref{TypeIhalf} implies the boundedness of scaled energies \eqref{TypeIdef}. This is Theorem \ref{TypeIimpliesscaled}, which is proved in Section \ref{sec.scalecrit}. Section \ref{sec.four} is devoted to the proof of regularity criteria under vorticity alignment on concentrating sets, namely Theorem \ref{CAnconcentratingglobal} and the technical result Proposition \ref{2Dreductionbycompactness}. In Section \ref{improveremark} we discuss further improvements of our main result. The final part, Section \ref{sec.local}, handles localised versions of Theorem \ref{CAnconcentratingglobal}. In particular, we prove Theorem \ref{theo.locgeomcrit}. 
Here we have to deal with issues specific to the Navier-Stokes equations with a flat boundary in the local setting.

\subsection{A few notations and definitions}\label{notationdef}

For $\rho>0$, we define $B(\rho):= B(0,\rho)$ and $Q(\rho):= B(\rho)\times (-\rho^2,0)$; for the half-space, $B^{+}(\rho):= B(\rho)\cap\mathbb{R}^3_{+}$, $Q^{+}(\rho):= B^{+}(\rho)\times (-\rho^2,0)$ and $Q^+:=\R^3_+\times(-\infty,0)$. In addition, we define $\Gamma(0,\rho)=\{x\in B(0,\rho): x_{3}=0\}$. The notation $L_q^tL_p^x(Q(1))$ stands for $L_q(-1,0;L_p(B(1)))$, and similarly for other domains of $\R^3\times (-\infty,\infty)$ or $\R^3_+\times (-\infty,\infty)$. The notation for the space $W^{2,1}_{m,n}$ is standard and is taken for example from \cite{Ser06}. In \cite{Ser06}, several useful parabolic Sobolev embeddings are also stated. We also denote by $\mathring W^1_p(B(1))$ the closure $\overline{C^\infty_c(B(1))}^{\|\nabla\cdot\|_{L_p}}$.

The definition of mild solutions of the Navier-Stokes equations \eqref{noslip} in the half-space is given in \cite{barker2015ancient} in potential form and in \cite{MMP17a} using the abstract form of Stokes semigroup.

Now we give the definition of suitable weak solutions to the Navier-Stokes equations in $Q^{+}(r)$.\footnote{This definition is taken from \cite{Ser09} (Definition 1.3).}
\begin{definition}\label{suitableweaksolutionboundary} 
We say that the pair $u:Q^{+}(r)\rightarrow\mathbb{R}^3$ and $p:Q^{+}(r)\rightarrow\mathbb{R}$  is a suitable weak solution to the Navier-Stokes equations in $Q^{+}(r)$  if:
\begin{equation}\label{spacesboundary}
u\in L_{\infty}(-r^2,0; L_{2}(B^{+}(r))),\,\,\,\,\,
\nabla u\in L_{2}(Q^{+}(r)),\,\,\,\,\,\,\,\,
p\in L_{\frac{3}{2}}( Q^{+}(r)),
\end{equation}

\begin{equation}\label{weaksolutionrepeatboundary}
\int\limits_{Q^{+}(r)} (-u\cdot\partial_{t} \varphi- u\otimes u:\nabla\varphi+\nabla u:\nabla\varphi- p\nabla\cdot\varphi)dyds=0
\end{equation}
for any $\varphi\in C_{0}^{\infty}(Q^{+}(r))$ and
\begin{equation}\label{boundarycondition}
u(x,t)=0\,\,\,\textrm{for}\,\,x_3=0\,\, \textrm{and}\,\,\textrm{almost}\,\,\textrm{every}\,\, -r^2<t<0.
\end{equation}
 Additionally, it is required that the following local energy inequality holds for all positive $\varphi \in C_{0}^{\infty}(B^{+}(r)\times (-r^2,\infty))$ and almost every times $t\in (-r^2,0)$:
\begin{align}
\begin{split}\label{localenergyequalityboundary}
&\int\limits_{B^{+}(r)}\varphi|u(x,s)|^2dx+2\int\limits_{-r^2}^t\int\limits_{B^{+}(r)}\varphi |\nabla u|^2 dxds\\
\leq\ & \int\limits_{-r^2}^t\int\limits_{B^{+}(r)}|u|^2(\partial_t \varphi+\Delta\varphi)+u\cdot\nabla\varphi(|u|^2+2p)dxds.
\end{split}
\end{align}
\end{definition}
For a suitable weak solution $(u,p)$ in $Q^{+}(r)$, we have the following classification of points in $(B^+(0,r) \cup \Gamma(0,r)) \times (-r^2,0]$. In particular 
\begin{itemize}
\item[] $(y_0,s_0) \in  (B^+(0,r) \cup \Gamma(0,r)) \times (-r^2,0]$ is a `singular point' of $u$ $\Rightarrow $ 
\item[] $u\notin L_{\infty}((B^{+}(y_0,r) \times (s_0-r^2,s_0)) \cap (Q^{+}(r))$ for all sufficiently small $r>0.$
\end{itemize}
Moreover,  $(y_0,s_0)\in (B^+(0,r) \cup \Gamma(0,r)) \times (-r^2,0]$ is  defined to be a `regular point' of $u$ if it is not a singular point of $u$.

\section{Estimates for the pressure in the half-space: revisiting the Chang and Kang bound}
\label{sec.ck}

In this section, we focus on obtaining Chang and Kang-like estimates for the pressure in the half-space (see \cite{CK18}). We consider the Stokes system 
\begin{equation}\label{e.nse}
\partial_{t} u-\Delta u+\nabla p= \nabla\cdot F,\,\,\,\,\,\, \nabla\cdot u=0\,\,\,\mbox{in}\,\,\, \R^3_+\times (0,T)
\end{equation}
supplemented with the initial and boundary condition
\begin{equation}\label{e.bdaryinit}
u|_{\partial\R^3_+}(\cdot,t)=0,\,\,\,\,\,u(x,0)=0.
\end{equation}
Here $F=(F_{\alpha\beta})_{1\leq\alpha,\beta\leq 3}\in L_q(0,T;\mathring W^{1}_{p}(\R^3_+))$, for a fixed $T\in(0,\infty)$. A key point is that $F$ vanishes on $\partial\R^3_+$. Our main result is in the following proposition.

\begin{proposition}\label{prop.changKang}
For all $q\in[1,\infty]$, for all $p\in(1,\infty)$,  
for all $\kappa\in(\frac1p,1]$, for all $F\in L_q(0,T;\mathring W^{1}_{p}(\R^3_+))$, we have the following estimate for the pressure of \eqref{e.nse}-\eqref{e.bdaryinit}: $p=p^{Helm}_F+p^{harm}_F$, where
\begin{align*}
\|p^{Helm}_F\|_{L_q(0,T;L_p(\R^3_+))}\leq\ & C\|F\|_{L_q(0,T;L_p(\R^3_+))},\\
\|p^{harm}_F\|_{L_q(0,T;L_\infty^{x_3}(0,\infty;L_p^{x'}(\R^2)))}\leq\ & CT^{\frac1{2}(\kappa-\frac1p)}\big\|\|F\|_{L_p(\R^3_+)}^{1-\kappa}\|\nabla F\|_{L_p(\R^3_+)}^\kappa\big\|_{L_q(0,T)},
\end{align*}
with a constant $C(\kappa,p)\in(0,\infty)$.
\end{proposition}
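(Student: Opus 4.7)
The plan is to exploit the pressure representation formulas for the Stokes system in the half-space established in \cite{MMP17a,MMP17b}. These give an explicit decomposition of the pressure into a ``Helmholtz'' contribution (a Poisson problem on $\R^3$ for an extension of $F$) plus a ``harmonic'' boundary correction in $\R^3_+$. Accordingly, I would write $p = p^{Helm}_F + p^{harm}_F$, where $p^{Helm}_F$ is defined from an appropriate symmetric extension $\widetilde F$ of $F$ across $\partial\R^3_+$ (well-defined in $L_p(\R^3)$ with comparable norm because $F|_{\partial\R^3_+}=0$), while $p^{harm}_F$ is the unique harmonic function in $\R^3_+$ decaying at spatial infinity such that $p^{Helm}_F+p^{harm}_F$ coincides with the Stokes pressure.

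\textbf{Helmholtz part.} One can express $p^{Helm}_F = R_iR_j\widetilde F_{ij}$ via the Riesz transforms on $\R^3$. The Calder\'on--Zygmund theorem for $p\in(1,\infty)$ then yields the pointwise-in-time bound
\[
\|p^{Helm}_F(\cdot,t)\|_{L_p(\R^3_+)} \leq C(p)\,\|F(\cdot,t)\|_{L_p(\R^3_+)}.
\]
Taking the $L_q$ norm in $t$ gives the first of the two stated inequalities.

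\textbf{Harmonic part.} Since $p^{harm}_F$ is harmonic in $\R^3_+$ and decays at infinity, it admits the Poisson representation $p^{harm}_F(x',x_3,t) = \bigl(e^{-x_3(-\Delta')^{1/2}}g\bigr)(x',t)$, where $g(\cdot,t) := p^{harm}_F(\cdot,0,t)$ is the boundary trace. The Poisson semigroup on $\R^2$ being a contraction in $L_p(\R^2)$ for all $p\in[1,\infty]$, we obtain
\[
\sup_{x_3\geq 0}\|p^{harm}_F(\cdot,x_3,t)\|_{L_p(\R^2)} \leq \|g(\cdot,t)\|_{L_p(\R^2)}.
\]
Reading off the form of $g$ from the MMP pressure formula, $g(\cdot,t)$ is expressible as a tangential Fourier multiplier of order zero (a tangential Calder\'on--Zygmund operator) applied to an appropriate trace/heat-smoothed incarnation of $F$ on $\partial\R^3_+$. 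One then controls $g$ via the fractional trace embedding $W^{\kappa,p}(\R^3_+) \hookrightarrow L_p(\partial\R^3_+)$, which is sharp at $\kappa=1/p$, combined with the real-interpolation/Gagliardo--Nirenberg inequality $\|\phi\|_{W^{\kappa,p}(\R^3_+)} \lesssim \|\phi\|_{L_p(\R^3_+)}^{1-\kappa}\|\nabla\phi\|_{L_p(\R^3_+)}^{\kappa}$. Applied to the $F$-dependent quantity whose trace realises $g$, this yields the product bound $\|F\|_{L_p}^{1-\kappa}\|\nabla F\|_{L_p}^{\kappa}$. The prefactor $T^{(\kappa-1/p)/2}$ arises from the parabolic scaling: the MMP kernel involves a heat-type length scale $\sim\sqrt{t-s}$, and upon taking $L_\infty^{x_3}$ plus estimating the tangential singular integral at this scale (or, equivalently, scaling $(x,t)\mapsto (x/\sqrt T,t/T)$), the gap $\kappa-1/p$ between the available and critical regularity converts into exactly this power of $T$. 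Finally, an $L_q$ norm in time assembles the second inequality.

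\textbf{Main obstacle.} The principal technical point is to extract from \cite{MMP17a,MMP17b} the precise form of the boundary trace $g$, recognise the tangential operators appearing there as bounded on $L_p(\R^2)$, and verify that the zero boundary condition on $F$ is exactly what makes the whole-space extension $\widetilde F$ live in $W^{1,p}(\R^3)$ with controlled norm so that the Helmholtz piece is genuinely Calder\'on--Zygmund. A secondary, purely bookkeeping obstacle is to track the scaling carefully enough to produce the exact exponent $(\kappa-1/p)/2$ on $T$ in the harmonic estimate.
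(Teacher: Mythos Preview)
Your treatment of the Helmholtz part is essentially correct and matches the paper (which uses the Neumann kernel directly rather than an even extension, but these are equivalent).

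The gap is in the harmonic part. You correctly observe that $p^{harm}_F$ is harmonic in $x$ and hence controlled in $L_\infty^{x_3}L_p^{x'}$ by its trace $g$ on $\partial\R^3_+$. But your control of $g$ is where the argument breaks down. The trace $g$ is \emph{not} the boundary value of $F$ or of any pointwise-in-time transform of $F$: from the MMP formula \eqref{e.pharmtilde} it is a time convolution involving the Dunford contour integral $\int_\Gamma e^{\lambda(t-s)}\ldots d\lambda$ and an integral over all of $\R^3_+$ weighted by the resolvent kernel $\tilde q_\lambda$. So the fractional trace embedding $W^{\kappa,p}(\R^3_+)\hookrightarrow L_p(\partial\R^3_+)$ does not apply to any object you have identified, and the factor $T^{(\kappa-1/p)/2}$ does not come from a scaling argument (both sides of the claimed inequality scale identically, so scaling only reduces to $T=1$, which you have not proved).

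What the paper actually does is work directly with the resolvent representation. Two structural observations are essential and absent from your sketch: first, writing $\mathbb P\nabla\cdot F=\nabla\cdot G$ via \cite{KS02}, one has $G_{33}=0$, so only \emph{tangential} derivatives $\partial_{x_\alpha}\tilde q_\lambda$ enter after integration by parts; these have the good decay \eqref{e.estnabla'q} (the vertical derivative \eqref{e.estpartil3q} would be fatal). Second, after normalising by the exponential factor $e^{-c|\lambda|^{1/2}z_3}$, the kernel $\partial_{x_\alpha}\tilde q_\lambda(\cdot,x_3,z_3)$ is a Calder\'on--Zygmund kernel in $x'$ uniformly in $x_3,z_3,\lambda$. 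This reduces the $L_p^{x'}$ estimate to a one-dimensional vertical integral $\int_0^\infty z_3^{-1}e^{-c|\lambda|^{1/2}z_3}\|G_{\alpha3}(\cdot,z_3,s)\|_{L_p(\R^2)}dz_3$. The paper's Lemma \ref{lem.vertint} (a Hardy-type interpolation bound, crucially using $G_{\alpha3}|_{z_3=0}=0$, which in turn uses $F|_{\partial\R^3_+}=0$) then gives the factor $|\lambda|^{-\frac12(\kappa-\frac1p)}\|G\|_{L_p}^{1-\kappa}\|\partial_{z_3}G\|_{L_p}^\kappa$. Integrating this over the contour $\Gamma$ yields $(t-s)^{\frac12(\kappa-\frac1p)-1}$, and it is the $L^1(0,T)$ norm of this kernel that produces $T^{\frac12(\kappa-\frac1p)}$. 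Your Gagliardo--Nirenberg intuition is morally what Lemma \ref{lem.vertint} encodes, but it acts in the $z_3$ variable at fixed $\lambda$, not as a spatial trace theorem on $F$.
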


This result is not completely new. A similar estimate is contained in the work of Chang and Kang \cite[Theorem 1.2, (1.14)]{CK18}. Notice that we gain boundedness in the vertical direction for $p^{harm}_F$, but we are not able to recover the $L_p^{x_3}$ integrability as in Chang and Kang. 
Our point here is to revisit the proof of the estimate of Chang and Kang. We give a proof based on elementary arguments, which takes advantage of new pressure formulas for the half-space discovered in \cite{MMP17a,MMP17b}. In particular, we avoid the use of space-time fractional Sobolev norms.

We decompose the pressure into
\begin{equation}\label{e.decpress}
p=p^{Helm}_{F}+p^{harm}_{F}.
\end{equation}
The Helmholtz pressure is given by
\begin{equation}\label{e.pHelm}
p^{Helm}_F(x',x_3,t)=-\int\limits_{\R^3_+}\nabla_zN(x',x_3,z',z_3)\cdot (\nabla\cdot F(z',z_3,t))dz'dz_3,
\end{equation}
for all $(x',x_3)\in \R^3_+$. Here $N$  
is the Neumann kernel for the Laplace equation in the half-space. We have the following formula: for all $x,\, z\in\R^3$, 
\begin{equation*}
N(x,z)=E(x-z)+E(x-z^*),\quad z^*=(z',-z_3),
\end{equation*}
where $E$ is the Green function for the Laplace equation in $\R^3_+$. 

For the harmonic part of the pressure, we rely on a decomposition of the pressure for the Stokes resolvent problem in the half-space. We follow the ideas in \cite{MMP17a,MMP17b} relying on an earlier decomposition of the pressure for the Stokes resolvent problem carried out in \cite{DHP01}. The harmonic part of the pressure is defined by
\begin{align}
&p^{harm}_{F}(x',x_3,t)\nonumber\\
=\ &\frac1{2\pi i}\int\limits_0^t\int\limits_{\Gamma}e^{\lambda(t-s)}\int\limits_{\R^3_+}q_\lambda(x'-z',x_3,z_3)\cdot(\mathbb P\nabla\cdot F(z',z_3,s))'dz'dz_3d\lambda ds\label{e.pharm}\\
=\ &\frac1{2\pi i}\int\limits_0^t\int\limits_{\Gamma}e^{\lambda(t-s)}\int\limits_{\R^3_+}\tilde q_\lambda(x'-z',x_3,z_3)(\mathbb P\nabla\cdot F(z',z_3,s))_3dz'dz_3d\lambda ds,\label{e.pharmtilde}
\end{align}
for all $(x',x_3)\in \R^3_+$. Here $\mathbb P$ denotes the Helmholtz-Leray projection and $\Gamma=\Gamma_\rho$ with $\rho\in (0,1)$ is the curve 
\begin{equation*}
\{\lambda\in \mathbb{C}~|~|{\rm arg}\, \lambda|=\eta, ~|\lambda|\geq \rho\} \cup \{\lambda\in \mathbb{C}~|~|{\rm arg}\, \lambda|\leq \eta, ~|\lambda|=\rho\}
\end{equation*} 
for some $\eta \in (\frac{\pi}{2},\pi)$. The kernel $q_\lambda$ (see \cite[Section 2]{MMP17a}) is defined by for all $x'\in\R^{2}$ and $x_3,\ z_3>0$,
\begin{align}\label{e.defqlambda}
q_\lambda(x',x_3,z_3):=i\int\limits_{\R^{2}}e^{ix'\cdot\xi}e^{-|\xi|x_3}e^{-\omega_\lambda(\xi)z_3}\left(\frac{\xi}{|\xi|}+\frac{\xi}{\omega_\lambda(\xi)}\right) d\xi\in\C^2.
\end{align}
where $\omega_\lambda(\xi)=\sqrt{\lambda+|\xi|^2}$, while the kernel $\tilde q_\lambda$ is defined by for all $x'\in\R^{2}$ and $x_3,\ z_3>0$,
\begin{align}\label{e.defqlambdatilde}
\tilde q_\lambda(x',x_3,z_3):=-\int\limits_{\R^{2}}e^{ix'\cdot\xi}e^{-|\xi|x_3}e^{-\omega_\lambda(\xi)z_3}\left(\frac{\omega_\lambda(\xi)}{|\xi|}+1\right) d\xi\in\C.
\end{align}
We will see below that the use of formula \eqref{e.pharmtilde} is more adapted to the study of $p^{harm}_F$. Indeed the vertical component of $\mathbb P\nabla\cdot F$ vanishes on the boundary of $\R^3_+$, which is not necessarily the case for the tangential component. Therefore, we focus now on formula \eqref{e.pharmtilde} and on the properties of $\tilde{q}_\lambda$. 

We need the following bounds for derivatives of $\tilde q_\lambda$: for all $\ep>0$, for all $\lambda\in S_{\pi-\ep}=\{\rho e^{i \theta}:\ \rho>0,\, \theta\in[-\pi+\ep,\pi-\ep]\}\subset\mathbb C$, there exists $C^*(\ep),\, c^*(\ep)\in(0,\infty)$ such that for all $x'\in\R^2$, $x_3,\, z_3>0$,
\begin{align}
\label{e.estnabla'q}
|\nabla'^{m}\tilde q_{\lambda}(x',x_3,z_3)|\leq\ &\frac{C^*e^{-c^*|\lambda|^\frac12z_3}}{(x_3+z_3+|x'|)^{1+m}}\left(|\lambda|^\frac12+\frac{1}{x_3+z_3+|x'|}\right),\quad m\in\{1,2\},\\
|\partial_{z_3}\tilde q_{\lambda}(x',x_3,z_3)|\leq\ &\frac{C^*e^{-c^*|\lambda|^\frac12z_3}}{(x_3+z_3+|x'|)}\left(|\lambda|+\frac{1}{(x_3+z_3+|x'|)^2}\right).
\label{e.estpartil3q}
\end{align}
These bounds are derived as the bounds for $q_\lambda$ in \cite[Proposition 3.7]{MMP17a}.

We focus now on the harmonic part of the pressure. We first compute the Helmholtz-Leray projection. From the work of Koch and Solonnikov \cite[Proposition 3.1]{KS02}, there exists {$G\in L^q(0,T;W^{1,p}(\R^3_+))$} such that
\begin{equation}\label{e.lerayprojG}
\mathbb P\nabla\cdot F=\nabla\cdot G.
\end{equation}
Moreover, we have the following formula for $G$: for all $1\leq\alpha,\, \beta\leq 3$,
\begin{align}\label{e.formulaG}
\begin{split}
G_{\alpha\beta}=\ &F_{\alpha\beta}-\delta_{\alpha\beta}F_{33}+(1-\delta_{3\alpha})\partial_{x_\beta}\Bigg(\int\limits_{\R^3_+}\partial_{z_\gamma}N(x,z)F_{\alpha\gamma}(z)dz\\
&+\int\limits_{\R^3_+}\big(\partial_{z_3}N(x,z)F_{3\alpha}(z)-\partial_{z_\alpha}N(x,z)F_{33}(z)\big)dz\Bigg).
\end{split}
\end{align}
This formula implies by the Calder\'on-Zygmund theory that for all $p\in(1,\infty)$, there exists a constant $C(p)\in(0,\infty)$ such that for almost every $t\in (0,T)$
\begin{equation}\label{e.estG}
\|G(\cdot,t)\|_{L_{p}(\R^3_+)}\leq C\|F(\cdot,t)\|_{L_{p}(\R^3_+)}\,\,\,\,\,\,\textrm{and}\,\,\,\,\,\|\nabla G(\cdot,t)\|_{L_{p}(\R^3_+)}\leq C\|\nabla F(\cdot,t)\|_{L_{p}(\R^3_+)}.
\end{equation}
In the study of \eqref{e.pharmtilde}, we have $\beta=3$. Notice that
\begin{equation}\label{e.propG33}
(\mathbb P\nabla\cdot F)_3=\sum_{\alpha=1}^2\partial_\alpha G_{\alpha 3}
\end{equation}
because $G_{33}=0$ by the formula \eqref{e.formulaG}. This cancellation property is essential since it enables to avoid the derivative of $\tilde q_\lambda$ in the vertical direction, which in view of \eqref{e.estpartil3q} has less decay in $x'$. 

Furthermore, the fact that the trace of $F(\cdot,t)$ vanishes on $\partial\R^3_+$ implies that the trace of $G_{\alpha3}(\cdot,t)$ also vanishes on $\partial\R^3_+$. Indeed, this property is clear for the first two terms in the right hand side of \eqref{e.formulaG}. Let us consider the third term. We define $K_{1}$ in the following way
\begin{equation*}
K_1(x)=\int\limits_{\R^3_+}\partial_{z_\gamma}N(x,z)F_{\alpha\gamma}(z)dz,\qquad\forall\ x\in\R^3_+.
\end{equation*}
We have by integration by parts using that the trace of $F(\cdot,t)$ vanishes on the boundary, for all $x\in\R^3_+$,
\begin{equation*}
K_1(x)=-\int\limits_{\R^3_+}N(x,z)\partial_{z_\gamma}F_{\alpha\gamma}(z)dz.
\end{equation*}
Hence, $K_1$ is a solution to the following Neumann problem
\begin{equation*}
-\Delta K_1=\nabla\cdot F^T\quad\mbox{in}\ \R^3_+,\qquad \partial_{x_3}K_1=F^T\cdot \vec{e}_3\equiv 0\quad\mbox{in}\ \R^2\times\{0\}.
\end{equation*}
For the last two terms in the right hand side of \eqref{e.formulaG}, we argue similarly. We let
\begin{equation*}
K_2(x)=\int\limits_{\R^3_+}\big(\partial_{z_3}N(x,z)F_{3\alpha}(z)-\partial_{z_\alpha}N(x,z)F_{33}(z)\big)dz.
\end{equation*}
Then calling $\tilde F_3$ the vector $\tilde F_3=-F_{33}\vec{e}_\alpha+F_{3\alpha}\vec{e}_3$, where $\vec{e}_\beta$, $\beta\in\{1,\ldots\, 3\}$ is the $\beta^{th}$ vector of the canonical basis of $\R^3$, we have
\begin{equation*}
K_2(x)=-\int\limits_{\R^3_+}N(x,z)\nabla\cdot \tilde F(z)dz.
\end{equation*}
We then conclude as for $K_1$ above.

For the proof of Proposition \ref{prop.changKang}, we rely on the following straightforward lemma.

\begin{lemma}\label{lem.vertint}
Let $p\in(1,\infty)$ and $\kappa\in(\frac1p,1]$. Let $h\in \mathring W^1_p(0,\infty)$. We have that for all $\rho\in(0,\infty)$, there exists a constant $C(p,\kappa)\in(0,\infty)$ such that
\begin{equation}\label{e.lem.vertint}
\Bigg|\int\limits_0^\infty\frac{e^{-\rho z_3}}{z_3}h(z_3)dz_3\Bigg|\leq C\rho^{-\kappa+\frac1p}\|h\|_{L_p(0,\infty)}^\frac{1-\kappa}p\|\partial_{z_3} h\|_{L_p(0,\infty)}^\frac\kappa p.
\end{equation}
\end{lemma}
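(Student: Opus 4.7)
The plan is to exploit the vanishing trace $h(0)=0$ (which holds because $h\in\mathring W^1_p(0,\infty)$) in order to tame the $1/z_3$ singularity at the origin, and then to interpolate between $\|h\|_{L_p}$ and $\|\partial_{z_3}h\|_{L_p}$ to produce the stated power of $\rho$.

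First, writing $h(z_3)=\int_0^{z_3}\partial_sh(s)\,ds$ and applying Hölder's inequality in $s$ yields the pointwise bound
\[
|h(z_3)|\le z_3^{1-1/p}\,\|\partial_{z_3}h\|_{L_p(0,\infty)},\qquad z_3>0.
\]
Plugged directly into the left-hand side of \eqref{e.lem.vertint}, this already gives the endpoint estimate $C\rho^{-1+1/p}\|\partial_{z_3}h\|_{L_p}$, corresponding to $\kappa=1$, after an explicit Gamma integral in $z_3$.

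For general $\kappa\in(1/p,1)$, I would split the integrand as $\frac{e^{-\rho z_3}}{z_3}|h(z_3)|^{\kappa}\cdot|h(z_3)|^{1-\kappa}$, apply the pointwise bound above on the factor $|h(z_3)|^\kappa$, and then apply Hölder's inequality with conjugate exponents $r=p/(1-\kappa)$ and $r'=p/(p-1+\kappa)$ so that the $|h(z_3)|^{1-\kappa}$ factor yields $\|h\|_{L_p(0,\infty)}^{1-\kappa}$. The remaining piece is a Gamma-type integral of the form $\bigl(\int_0^\infty e^{-\rho r' z_3}\,z_3^{(\kappa(1-1/p)-1)r'}\,dz_3\bigr)^{1/r'}$, and the hypothesis $\kappa>1/p$ is exactly what ensures the exponent of $z_3$ is strictly greater than $-1$, hence that this integral converges at $0$.

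Gathering the powers of $\rho$ from the Gamma evaluation, a direct computation shows they combine to the prescribed exponent $-\kappa+1/p$, yielding the claimed bound. The main obstacle is purely the algebraic bookkeeping of exponents (in particular identifying the correct conjugate pair and verifying the integrability condition reduces to $\kappa p>1$); no deeper analytical ingredient beyond the trace identity $h(0)=0$ and a well-chosen Hölder split is needed.
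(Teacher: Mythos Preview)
Your proof is correct. It differs from the paper's approach: the paper splits the integrand as
\[
\frac{e^{-\rho z_3}}{z_3}\,|h(z_3)|
=\frac{e^{-\rho z_3}}{z_3^{\,1-\kappa}}\cdot |h|^{1-\kappa}\cdot \frac{|h|^{\kappa}}{z_3^{\,\kappa}},
\]
applies a three-way H\"older inequality, and controls the last factor via the one-dimensional Hardy inequality $\int_0^\infty |h|^p z_3^{-p}\,dz_3 \le C\int_0^\infty |\partial_{z_3}h|^p\,dz_3$. You instead use the pointwise bound $|h(z_3)|\le z_3^{1-1/p}\|\partial_{z_3}h\|_{L_p}$ (obtained from $h(0)=0$ and H\"older on $\int_0^{z_3}\partial_s h\,ds$) to kill the $1/z_3$ singularity directly, and then apply a two-factor H\"older split. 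Your route is arguably more elementary, bypassing Hardy entirely and making the role of the zero trace explicit; the paper's route is the more classical reduction. Both arrive at the same integrability condition $\kappa>1/p$ and the same power $\rho^{-\kappa+1/p}$.

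One remark: both arguments actually produce exponents $1-\kappa$ and $\kappa$ on $\|h\|_{L_p}$ and $\|\partial_{z_3}h\|_{L_p}$ respectively, not $(1-\kappa)/p$ and $\kappa/p$ as displayed in \eqref{e.lem.vertint}. This is a typo in the statement, as is confirmed by the paper's own proof and by the application of the lemma immediately afterwards, where the bound $\|G\|_{L_p(\R^3_+)}^{1-\kappa}\|\partial_{z_3}G\|_{L_p(\R^3_+)}^{\kappa}$ appears.
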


\begin{proof}[Proof of Lemma \ref{lem.vertint}]
H\"older's inequality with the parameters $\beta,\, \beta',\, \gamma,\, \gamma'\in[1,\infty]$ such that
\begin{equation*}
\kappa\beta=p,\quad (1-\kappa)\beta'\gamma=p,\quad \frac1\beta+\frac1{\beta'}=1,\quad \frac1\gamma+\frac1{\gamma'}=1
\end{equation*}
implies
\begin{align*}
\int\limits_0^\infty\frac{e^{-\rho z_3}}{z_3}h(z_3)dz_3
\leq \Bigg(\int\limits_0^\infty\frac{e^{-\rho\beta'\gamma' z_3}}{z_3^{(1-\kappa)\beta'\gamma'}}dz_3\Bigg)^\frac1{\beta'\gamma'}\Bigg(\int\limits_0^\infty|h|^pdz_3\Bigg)^\frac1{\beta'\gamma}\Bigg(\int\limits_0^\infty\frac{|h|^p}{z_3^p}dz_3\Bigg)^\frac1{\beta}.
\end{align*}
This yields \eqref{e.lem.vertint} by using the one-dimensional Hardy inequality and the relations for the H\"older exponents. In particular, $\kappa>\frac{1}{p}$ and $\beta'\gamma'=\frac{p}{p-1}$ gives the convergence of the first integrand in the product.
\end{proof}

\begin{proof}[Proof of Proposition \ref{prop.changKang}]
We investigate separately the Helmholtz and the harmonic pressures. Let $q\in [1,\infty]$, $p\in(1,\infty)$ and $\kappa\in(\frac1p,1]$ be fixed.

\noindent{\bf Step 1: Helmholtz pressure}\\
From the formula \eqref{e.pHelm}, Calder\'on-Zygmund type estimates imply that
\begin{equation*}
\|p^{Helm}_F\|_{L_q(0,T;L_p(\R^3_+))}\leq C\|F\|_{L_q(0,T;L_p(\R^3_+))}
\end{equation*}
for a constant $C(p)\in(0,\infty)$.

\noindent{\bf Step 2: harmonic pressure}\\
Our starting point to estimate the harmonic part of the pressure is formula \eqref{e.pharmtilde}, which we combine with the formula \eqref{e.propG33} for the source term. We have for almost all $x_3\in(0,\infty),\, x'\in\R^2,\, s\in(0,T)$,
\begin{align*}
&\int\limits_0^\infty\int\limits_{\R^2}\tilde q_\lambda(x'-z',x_3,z_3)(\mathbb P\nabla\cdot F)_3(z',z_3,s)dz'dz_3\\
=\ &\sum_{\alpha=1}^2\int\limits_0^\infty\int\limits_{\R^2}\tilde q_\lambda(x'-z',x_3,z_3)\partial_{z_\alpha} G_{\alpha 3}(z',z_3,s)dz'dz_3\\
=\ &\sum_{\alpha=1}^2\int\limits_0^\infty\int\limits_{\R^2}\partial_{x_\alpha}\tilde q_\lambda(x'-z',x_3,z_3)G_{\alpha 3}(z',z_3,s)dz'dz_3\\
=\ &I(x',x_3,s).
\end{align*}
As we stressed above, we have only tangential derivatives falling on $\tilde q_\lambda$, so that we rely on \eqref{e.estnabla'q}. 
We first show that for all $\alpha\in\{1,2\}$, $x_3,\, z_3>0$, the kernel $k_{x_3,z_3}$ defined by for all $x'\in\R^2$
\begin{equation*}
k_{\alpha,x_3,z_3}(x')=\bigg(1+\frac1{|\lambda|^\frac12z_3}\bigg)^{-1}|\lambda|^{-\frac12}e^{c^*|\lambda|^\frac12z_3}\partial_{x_\alpha}\tilde q_\lambda(x',x_3,z_3)
\end{equation*}
is a Calder\'on-Zygmund kernel. We recall that $c^*$ is the constant appearing in \eqref{e.estpartil3q}. We have for all $x'\in\R^2$
\begin{align}
\label{e.boundka}
|k_{\alpha,x_3,z_3}(x')|\leq\ & \frac{C^*}{(x_3+z_3+|x'|)^2}\leq \frac{C^*}{|x'|^2},\\
|\nabla' k_{\alpha,x_3,z_3}(x')|\leq\ & \frac{C^*}{(x_3+z_3+|x'|)^3}\leq \frac{C^*}{|x'|^3},
\label{e.boundkb}
\end{align}
where $C^*$ is the constant in \eqref{e.estpartil3q}. 
A key point is that the bounds \eqref{e.boundka} and \eqref{e.boundkb} on $k_{x_3,z_3}$ are uniform in $x_3,\, z_3$ and $\lambda\in S_{\pi-\ep}$. Moreover, from \eqref{e.defqlambdatilde} we have the antisymmetry
\begin{equation}\label{e.anti}
k_{\alpha,x_3,z_3}(x')=-k_{\alpha,x_3,z_3}(-x'),\quad\forall x'\in\R^2.
\end{equation}
Hence, \eqref{e.boundka}, \eqref{e.boundkb} and \eqref{e.anti} imply that $k_{x_3,z_3}$ is a Calder\'on-Zygmund kernel. Therefore, for all $p\in(1,\infty)$, there exists $C(p)\in(0,\infty)$ such that for all  $h\in L^p(\R^2)$
\begin{equation}\label{e.bddkLp}
\|k_{\alpha,x_3,z_3}\ast h\|_{L_p(\R^2)}\leq C\|h\|_{L_p(\R^2)}.
\end{equation}
Notice that the constant in \eqref{e.bddkLp} is uniform in $x_3,\, z_3>0$ and $\lambda\in S_{\pi-\ep}$ because of the uniformity in \eqref{e.boundka} and \eqref{e.boundkb}. We now turn to estimating the harmonic pressure. We have for almost all $s\in(0,T)$,
\begin{align*}
&\|I(\cdot,s)\|_{L_\infty(0,\infty;L_p(\R^2))}\\
\leq\ &\sum_{\alpha=1}^2\left\|\int\limits_0^\infty\Bigg\|\int\limits_{\R^2}\partial_{x_\alpha}\tilde q_\lambda(x'-z',x_3,z_3)G_{\alpha 3}(z',z_3,s)dz'\Bigg\|_{L_p(\R^2)}dz_3\right\|_{L_\infty(0,\infty)}\\
\leq\ &C\sum_{\alpha=1}^2\Bigg\|\int\limits_0^\infty|\lambda|^\frac12e^{-c^*|\lambda|^\frac12z_3}\Big(1+\frac{1}{|\lambda|^\frac12z_3}\Big)\|G_{\alpha 3}(\cdot,z_3,s)\|_{L_p(\R^2)}dz_3\Bigg\|_{L_\infty(0,\infty)}\\
\leq\ &
C\sum_{\alpha=1}^2\Bigg\|\int\limits_0^\infty e^{-c|\lambda|^\frac12z_3}\frac{\|G_{\alpha 3}(\cdot,z_3,s)\|_{L_p(\R^2)}}{z_3}dz_3\Bigg\|_{L_\infty(0,\infty)},
\end{align*}
for some $c\in(0,c^*)$. Notice that we used the bound
$$|\lambda|^{\frac{1}{2}}e^{-c^*|\lambda|^{\frac{1}{2}}z_{3}}=\frac{1}{z_{3}}\times z_{3}|\lambda|^{\frac{1}{2}}e^{-c^*|\lambda|^{\frac{1}{2}}z_{3}}\leq \frac{Ce^{-\frac{c^*}2|\lambda|^{\frac{1}{2}}z_{3}}}{z_{3}}.$$
Applying now Lemma \ref{lem.vertint}, we find for almost every $s\in(0,T)$,
\begin{equation*}
\|I(\cdot,s)\|_{L_\infty(0,\infty;L_p(\R^2))}
\leq\ C|\lambda|^{-\frac12(\kappa-\frac1p)}\|G(\cdot,s)\|_{L_p(\R^3_+)}^{1-\kappa}\|\partial_{z_3}G(\cdot,s)\|_{L_p(\R^3_+)}^\kappa,
\end{equation*}
with a constant $C(\kappa,p)\in(0,\infty)$. Remark that in order to apply Lemma \ref{lem.vertint}, we used the crucial fact that $G_{\alpha 3}$ has zero trace on the boundary. It remains to compute the integral in $\lambda$ and to estimate the convolution in time. We have for almost every $s\in(0,T)$,
\begin{align*}
&\left|\int\limits_{\Gamma}e^{\lambda(t-s)}\|I(\cdot,s)\|_{L_\infty(0,\infty;L_p(\R^2))}d\lambda\right|\\
\leq\ & \int\limits_{\Gamma}e^{\Rel\lambda(t-s)}\|I(\cdot,s)\|_{L_\infty(0,\infty;L_p(\R^2))}|d\lambda|\\
\leq\ &C(p)\int\limits_0^\infty e^{(t-s)r\cos\eta}r^{-\frac12(\kappa-\frac1p)}dr\|G(\cdot,s)\|_{L_p(\R^3_+)}^{1-\kappa}\|\partial_{z_3}G(\cdot,s)\|_{L_p(\R^3_+)}^\kappa\\
\leq\ &C(\eta,\kappa,p)(t-s)^{\frac12(\kappa-\frac1p)-1}\|G(\cdot,s)\|_{L_p(\R^3_+)}^{1-\kappa}\|\partial_{z_3}G(\cdot,s)\|_{L_p(\R^3_+)}^\kappa.
\end{align*}
Since $\kappa>\frac1p$, the function $s^{\frac12(\kappa-\frac1p)-1}\mathbf 1_{(0,T)}(s)$ is integrable. Therefore by convolution in time
\begin{align*}
&\Bigg\|\int\limits_0^t\int\limits_{\Gamma}e^{\lambda(t-s)}\left\|I(\cdot,s)\right\|_{L_\infty(0,\infty;L_p(\R^3_+))}d\lambda ds\Bigg\|_{L_q(0,T)}\\
\leq\ & C(\eta,\kappa,p)T^{\frac12(\kappa-\frac1p)}\left\|\|G(\cdot,s)\|_{L_p(\R^3_+)}^{1-\kappa}\|\partial_{z_3}G(\cdot,s)\|_{L_p(\R^3_+)}^\kappa\right\|_{L_q(0,T)}.
\end{align*}
We conclude by using the bound \eqref{e.estG} on $G$.
\end{proof}

\section{Boundedness of scale-critical quantities}
\label{sec.scalecrit}

The goal of this section is to prove Theorem \ref{TypeIimpliesscaled} stated in the Introduction. The result follows from careful estimates of the terms in the right hand side of the local energy inequality \eqref{localenergyequalityboundary}. Our analysis involves a delicate study of the pressure term, see Section \ref{sec.pressscaleinv}.

In this section, all the parabolic cylinders are centered at $(0,0)$. We recall that $E(u,r)$, $A(u,r)$ and $D(u,r)$ are defined by \eqref{Adef}. We first estimate scale-critical quantities involving the velocity. These estimates play a crucial role in Section \ref{sec.pressscaleinv} when it comes to estimate the pressure.

\begin{lemma}\label{interpolationwithTypeI}
Suppose that $u\in C_{w}^{t}L_{2}^{x}(B^{+} \times (-1,0))$, $\nabla u\in L_{2}(B^{+}\times (-1,0))$ and the local ODE blow-up rate \eqref{e.typeIR3+loc}. Furthermore, assume 
\begin{equation}\label{indicecondition1}
\hat{s}, \hat{l}>2\,\,\,\,\,\,\,\textrm{and}\,\,\,\,\,\frac{1}{\hat{s}}+\frac{1}{\hat{l}}>\frac{1}{2}.
\end{equation}
Then the above assumptions imply that for 
\begin{equation}\label{deltahypothesis}
\delta\in \Big(0,\min\Big(\frac{\frac{1}{\hat{s}}+\frac{1}{\hat{l}}-\frac{1}{2}}{{\frac{1}{2}-\frac{1}{\hat{s}}}},1\Big)\Big)
\end{equation}
we obtain
\begin{equation}\label{interpolationbound}
\|u\|_{L^{t}_{\hat{l}}L^{x}_{\hat{s}}(Q^{+}(1))}\leq C(M,\delta,\hat{s},\hat{l})A(u,1)^{\frac12-\frac{1}{\hat{l}(1+\delta)}}.
\end{equation}
\end{lemma}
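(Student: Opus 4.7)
The strategy is to interpolate, pointwise in time, between two bounds on $\|u(\cdot,t)\|_{L_{\hat s}(B^+(1))}$ obtained from the Type~I assumption \eqref{e.typeIR3+loc}, and then to integrate in time.

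First, I would record two pointwise-in-time bounds. On the one hand, H\"older's inequality combined with \eqref{e.typeIR3+loc} yields
\begin{equation*}
\|u(\cdot,t)\|_{L_{\hat s}(B^+(1))}\le |B^+(1)|^{1/\hat s}\,\|u(\cdot,t)\|_{L_\infty}\le C_{\hat s}\,M\,(-t)^{-1/2}.
\end{equation*}
On the other hand, splitting $|u|^{\hat s}=|u|^{\hat s-2}|u|^2$, applying the Type~I bound to $|u|^{\hat s-2}$, and using the definition of $A(u,1)$ to control $\int_{B^+(1)}|u|^2\,dx$ gives
\begin{equation*}
\|u(\cdot,t)\|_{L_{\hat s}}\le M^{1-2/\hat s}\,A(u,1)^{1/\hat s}\,(-t)^{-(1/2-1/\hat s)}.
\end{equation*}
The first bound is scale-critical in time; the second has a weaker time singularity at the price of a factor $A(u,1)^{1/\hat s}$.

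Next, for a parameter $\theta\in(0,1)$ to be chosen, I would take the geometric mean of these two inequalities with weights $\theta$ and $1-\theta$:
\begin{equation*}
\|u(\cdot,t)\|_{L_{\hat s}}\le C(M,\hat s)\,A(u,1)^{\theta/\hat s}\,(-t)^{-(1/2-\theta/\hat s)}.
\end{equation*}
Raising to the $\hat l$-th power and integrating in $t\in(-1,0)$, the integral $\int_{-1}^0(-t)^{-\hat l(1/2-\theta/\hat s)}\,dt$ converges provided $\hat l(1/2-\theta/\hat s)<1$, in which case
\begin{equation*}
\|u\|_{L^t_{\hat l}L^x_{\hat s}(Q^+(1))}\le C(M,\hat s,\hat l,\theta)\,A(u,1)^{\theta/\hat s}.
\end{equation*}

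The final step is to match the target exponent by setting $\theta:=\hat s/2-\hat s/(\hat l(1+\delta))$, so that $\theta/\hat s=\tfrac12-\tfrac{1}{\hat l(1+\delta)}$ and $\hat l(1/2-\theta/\hat s)=1/(1+\delta)$. The admissibility range of $\delta$ in \eqref{deltahypothesis} then encodes exactly the two constraints required: the lower bound $\delta>0$ is equivalent to $1/(1+\delta)<1$, i.e.\ convergence of the time integral, while the upper bound $\delta<\frac{1/\hat s+1/\hat l-1/2}{1/2-1/\hat s}$ is equivalent to $\theta<1$, validating the geometric interpolation. The hypothesis \eqref{indicecondition1} on $(\hat s,\hat l)$ ensures that this interval of admissible $\delta$ is non-empty. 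There is no substantive analytic difficulty; the one conceptual point worth highlighting is that mixing the $L_\infty$ Type~I bound with the energy bound at the \emph{interpolation} level removes any need to control $\nabla u$, so the final estimate depends only on the single scale-invariant quantity $A(u,1)$.
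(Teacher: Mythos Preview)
Your proof is correct and reaches the same conclusion with the same exponent, but the bookkeeping differs from the paper's. The paper interpolates in the two-dimensional exponent plane $(1/s,1/l)$: it writes $(1/\hat s,1/\hat l)$ as a convex combination of the pure-energy endpoint $(1/2,0)$, i.e.\ $L^t_\infty L^x_2$, and a pure-Type~I endpoint $(1/S,(1+\delta)/2)$, i.e.\ $L^t_{2/(1+\delta)}L^x_S$ with a carefully constructed auxiliary index $S=S(\hat s,\hat l,\delta)$, and then bounds the second factor directly by \eqref{e.typeIR3+loc}. You instead stay at fixed spatial exponent $\hat s$, produce two pointwise-in-time bounds on $\|u(\cdot,t)\|_{L_{\hat s}}$ (one pure Type~I, one a mix of Type~I with the energy), take their geometric mean, and integrate. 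Your route is more elementary and avoids introducing the auxiliary index $S$; the paper's route makes the picture in Figure~\ref{fig.interpol} explicit, which is useful later when choosing the parameters in Lemma~\ref{fractionalsobolevwithTypeI}. As a minor bonus, your argument does not actually use the constraint $\delta<1$ in \eqref{deltahypothesis}; only $\delta>0$ and the upper bound coming from $\theta<1$ are needed.
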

\begin{proof}
First we note that under the hypothesis on $\hat{s}$, $\hat{l}$ and $\delta$ we have
\begin{equation}\label{interpolationcondition1}
0< \frac{\Big(\frac{1}{\hat{s}}-\frac{1}{2}\Big)\Big(\frac{1}{2}+\frac{\delta}{2}\Big)}{\frac{1}{\hat{l}}}+ \frac{1}{2}<\frac{1}{\hat{s}}.
\end{equation}
Define $$\frac{1}{S(\hat{s},\delta, \hat{l})}:= \frac{\Big(\frac{1}{\hat{s}}-\frac{1}{2}\Big)\Big(\frac{1}{2}+\frac{\delta}{2}\Big)}{\frac{1}{\hat{l}}}+ \frac{1}{2}.$$
Observe that for $\theta= 1-\frac{2}{\hat{l}(1+\delta)}\in (0,1)$ we have
\begin{equation}\label{interpolationidentity}
\Big(\frac{1}{\hat{s}}, \frac{1}{\hat{l}}\Big)= \theta\Big(\frac{1}{2},0\Big)+(1-\theta)\Big( \frac{1}{S(\hat{s}, \delta, \hat{l})}, \frac{1}{2}+\frac{\delta}{2}\Big).
\end{equation}
Thus we may interpolate $L^{t}_{\hat{l}}L^{x}_{\hat{s}}$ between $L^{t}_{\infty}L^{x}_{2}$ and $L^{t}_{\frac{2}{1+\delta}}L^{x}_{S(\hat{s}, \delta, \hat{l})}$ (see Figure \ref{fig.interpol} on page \pageref{fig.interpol}) to get
\begin{equation}\label{interpolationcalculation}
\|u\|_{L^{t}_{\hat{l}}L^{x}_{\hat{s}}(Q^{+}(1))}\leq \|u\|_{L^{t}_{\frac{2}{1+\delta}}L^{x}_{S(\hat{s}, \delta, \hat{l})}(Q^{+}(1))}^{\frac{2}{\hat{l}(1+\delta)}}A(u,1)^{\frac12-\frac{1}{\hat{l}(1+\delta)}}.
\end{equation}
We then conclude by controlling $\|u\|_{L^{t}_{\frac{2}{1+\delta}}L^{x}_{S(\hat{s}, \delta, \hat{l})}(Q^{+}(1))}$ by \eqref{e.typeIR3+loc}.
\end{proof}
\begin{remark}\label{notation}
From now on when using  above lemma, we will write for convenience
$$ \|u\|_{L^{t}_{\hat{l}}L^{x}_{\hat{s}}(Q^{+}(1))}\leq A(u,1)^{(\frac12-\frac{1}{\hat{l}})_{+}}$$
with the understanding that $(\frac12-\frac{1}{\hat{l}})_{+}$ is greater than (but arbitrarily close to) $\frac12-\frac{1}{\hat{l}}$. 

\end{remark}
Before stating the next lemma, we introduce the notation
\begin{equation}\label{scaledvelocity}
H_{\lambda'}(u,r):=\frac{1}{r^{5-2\lambda'}}\int\limits_{Q^{+}(r)} |u|^{2\lambda'} dxdt
\end{equation}
\begin{equation}\label{scaledfractional}
F_{\lambda', \beta}(u,r):= \frac{1}{r^{5-2\lambda'-\lambda'\beta}}\int\limits_{-r^2}^{0} \|u\|_{L_{2\lambda'}(B^{+}(r))}^{2\lambda'(1-\beta)}\||u||\nabla u|\|_{L_{\lambda'}(B^{+}(r))}^{\beta\lambda'}dt.
\end{equation}
\begin{lemma}\label{fractionalsobolevwithTypeI}
Suppose that $u\in C_{w}^{t}L_{2}^{x}(B^{+} \times (-1,0))$, $\nabla u\in L_{2}(B^{+}\times (-1,0))$ 
and the local ODE blow-up rate \eqref{e.typeIR3+loc}. 
Define
\begin{equation}\label{lambda'def}
\frac{1}{\lambda'}:=\frac{3}{4}-\delta.
\end{equation}
Assume
\begin{equation}\label{deltacondition}
\delta\in \Big(0,\frac{1}{12}\Big)
\end{equation}
and
\begin{equation}\label{betacondition}
\frac{1}{\lambda'}<\beta< 1-4\delta
\end{equation}
Then under the above assumptions, there exists $\mu(\beta,\delta),\, \theta(\delta)\in (0,1)$ such that for all $r\in(0,1]$
\begin{equation}\label{lowerorderestimate}
H_{\lambda'}(u,r)\leq C(M,\delta,\beta,\lambda')A(u,r)^{\theta}
\end{equation}
\begin{equation}\label{fractionalsobolevspaceestimate}
F_{\lambda',\beta}(u,r)\leq C(M,\delta,\beta,\lambda')\big( A(u,r)+E(u,r)\big)^{\mu}.
\end{equation}
\end{lemma}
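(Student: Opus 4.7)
The quantities $A(u,r)$, $E(u,r)$, $H_{\lambda'}(u,r)$ and $F_{\lambda',\beta}(u,r)$ are all invariant under the Navier-Stokes parabolic rescaling $u_r(y,s):=r\,u(ry,r^2s)$, which sends $Q^+(r)$ to $Q^+(1)$; the Type I bound \eqref{e.typeIR3+loc} is likewise scale-invariant. Thus it suffices to prove both estimates at $r=1$.

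\textbf{The $H_{\lambda'}$ bound.} Since $H_{\lambda'}(u,1)=\|u\|_{L^t_{2\lambda'}L^x_{2\lambda'}(Q^+(1))}^{2\lambda'}$, the plan is to apply Lemma~\ref{interpolationwithTypeI} directly with $\hat l=\hat s=2\lambda'$. The hypotheses \eqref{indicecondition1} are immediate from $\lambda'>4/3>1$ and $1/\hat s+1/\hat l=1/\lambda'=3/4-\delta>1/2$, which hold for $\delta\in(0,1/12)$. This yields $H_{\lambda'}(u,1)\leq C(M,\delta)\,A(u,1)^{(\lambda'-1)_+}$, and since $\lambda'<3/2$ the exponent can be chosen in $(0,1)$.

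\textbf{The $F_{\lambda',\beta}$ bound, setup.} After H\"older's inequality in space with dual exponents $p/\lambda'$ and $2/\lambda'$, where $p:=1/(1/4-\delta)$ is chosen so that $\lambda'/p+\lambda'/2=1$, one gets $\||u||\nabla u|\|_{L_{\lambda'}(B^+(1))}\leq \|u\|_{L_p(B^+(1))}\|\nabla u\|_{L_2(B^+(1))}$. The time integrand in $F_{\lambda',\beta}$ then splits as a product of three factors $\|u\|^{2\lambda'(1-\beta)}_{L^x_{2\lambda'}}\,\|u\|^{\beta\lambda'}_{L^x_p}\,\|\nabla u\|^{\beta\lambda'}_{L^x_2}$. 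Applying three-factor H\"older in time with positive weights $w_1+w_2+w_3=1$, and choosing $w_3=\beta\lambda'/2$ so that the last piece is exactly $E(u,1)^{\beta\lambda'/2}$, leaves two mixed-norm quantities $\|u\|^{2\lambda'(1-\beta)}_{L^t_{\hat l_1}L^x_{2\lambda'}}$ and $\|u\|^{\beta\lambda'}_{L^t_{\hat l_2}L^x_p}$, with $\hat l_1=2\lambda'(1-\beta)/w_1$ and $\hat l_2=\beta\lambda'/w_2$, to be controlled by Lemma~\ref{interpolationwithTypeI}.

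\textbf{Main obstacle and resolution.} The crux of the argument is to exhibit a pair $(w_1,w_2)$ satisfying $w_1+w_2=1-\beta\lambda'/2$ for which Lemma~\ref{interpolationwithTypeI} applies to both of the above factors. The conditions $\hat s_i>2$ hold for free; the conditions $\hat l_i>2$ give $w_1<\lambda'(1-\beta)$ and $w_2<\beta\lambda'/2$; the conditions $1/\hat l_i+1/\hat s_i>1/2$ translate into $w_1>(\lambda'-1)(1-\beta)$ and $w_2>\beta\lambda'(1/4+\delta)$. Substituting the affine constraint and using the defining identity $\lambda'(3/4-\delta)=1$, the four inequalities reduce to the single compatibility condition
\[
\lambda'(2+\beta)<4,
\]
which is precisely equivalent to $\beta<1-4\delta$, our hypothesis \eqref{betacondition}. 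This matching between the interpolation range and the hypothesis is the main algebraic point. Once such $w_1,w_2$ are fixed, the exponents telescope: the total power of $A(u,1)$ is $[\lambda'(1-\beta)-w_1]+[\beta\lambda'/2-w_2]=\lambda'-1$, so that
\[
F_{\lambda',\beta}(u,1)\leq C(M,\delta,\beta)\,A(u,1)^{(\lambda'-1)_+}\,E(u,1)^{\beta\lambda'/2}.
\]
The sum $(\lambda'-1)+\beta\lambda'/2<1$ is again equivalent to $\beta<1-4\delta$; this strict slack absorbs the arbitrarily small loss in the $(\cdot)_+$-exponent coming from Lemma~\ref{interpolationwithTypeI}, and a final use of $A^a E^b\leq(A+E)^{a+b}$ delivers the bound with some $\mu(\beta,\delta)\in(0,1)$.
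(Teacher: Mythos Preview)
Your proof is correct and takes essentially the same approach as the paper: the scaling reduction, the direct application of Lemma~\ref{interpolationwithTypeI} for $H_{\lambda'}$, and the H\"older splitting of $F_{\lambda',\beta}$ into two mixed-norm velocity factors times $E^{\beta\lambda'/2}$ are all the same (your $p,\hat l_1,\hat l_2$ correspond to the paper's $C,2\hat Q,D$). The only difference is presentational: you parametrize by H\"older weights $w_1,w_2$ and compute the compatibility condition $\lambda'(2+\beta)<4\Leftrightarrow\beta<1-4\delta$ explicitly, whereas the paper parametrizes by the indices $\hat Q,\tilde Q$ and leaves the verification implicit.
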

\begin{proof}
Using a scaling argument, we may assume without loss of generality that $r=1$. 
Since $2\lambda'\in (2,3)$ we can immediately apply the previous lemma to show that for some $\theta(\delta)\in (0,1)$ we have
$$H_{\lambda'}(u,1)\leq C(M,\delta) A(u,1)^{\theta}.$$
Let us now focus on the more complicated term $F_{\lambda',\beta}(u,1)$.
Using H\"{o}lder's inequality we get
\begin{equation}\label{IIest1}
F_{\lambda',\beta}(u,1)\leq \|u\|_{L^{t}_{D}L^{x}_{C}(Q^{+}(1))}^{\lambda'\beta} E(u,1)^{\frac{\lambda'\beta}2} \|u\|^{2(1-\beta)\lambda'}_{L^{t}_{2\hat{Q}}L^{x}_{2\lambda'}(Q^{+}(1))}.
\end{equation}
Here, we have
\begin{equation}\label{Holder1}
\frac{\beta}{\tilde{Q}}+\frac{1-\beta}{\hat{Q}}=\frac{1}{\lambda'},
\end{equation}
\begin{equation}\label{Holder2}
\frac{1}{C}+\frac{1}{2}=\frac{1}{\lambda'}
\end{equation}
and
\begin{equation}\label{Holder3}
\frac{1}{D}+\frac{1}{2}=\frac{1}{\tilde{Q}}.
\end{equation}
In order to apply the previous Lemma, we now show that we can select such indices with  $C,D\in (2,\infty)$, $\hat{Q}\in (1,\infty)$, $$ \frac{1}{C}+\frac{1}{D}>\frac{1}{2}$$  and $$\frac{1}{\lambda'}+\frac{1}{\hat{Q}}>1.$$
Using \eqref{lambda'def}-\eqref{deltacondition}, we see that this is equivalent to selecting $\hat{Q}$ and $\tilde{Q}$ such that \eqref{Holder1} is satisfied with $$\frac{1}{4}+\delta<\frac{1}{\hat{Q}}<1\,\,\,\,\,\,\,\,\textrm{and}\,\,\,\,\,\,\,\,\frac{3}{4}+\delta<\frac{1}{\tilde{Q}}<1. $$ 
By using \eqref{lambda'def}-\eqref{betacondition} together with the intermediate value theorem, we see that it is possible to select such indices. 
This then allows us to apply the previous lemma to \eqref{IIest1}, which gives
\begin{equation}\label{IIest2}
F_{\lambda',\beta}(u,1)\leq C(A(u,1)+E(u,1))^{\lambda'\big(\beta(\frac12-\frac{1}{D})_{+}+\frac\beta2+(1-\beta)(1-\frac{1}{\hat{Q}})_{+}\big)}.
\end{equation}
It is therefore sufficient to show
$$\lambda'\big(\beta(\tfrac12-\tfrac{1}{D})+\tfrac\beta2+(1-\beta)(1-\tfrac{1}{\hat{Q}})\big)<1. $$
This can be verified by using \eqref{betacondition} and \eqref{Holder1}-\eqref{Holder3}.
\end{proof}

\subsection{Scaled Pressure estimates under Type I assumption}
\label{sec.pressscaleinv}
For convenience, let us introduce the scaled quantities
\begin{equation}\label{scaledpressurelambda'}
D_{\lambda'}(p,r):=\frac{1}{r^{5-2\lambda'}}\int\limits_{Q^{+}(r)} |p-(p)_{B^{+}(r)}|^{\lambda'} dxdt
\end{equation}
In this subsection, we let $\lambda'$ be as in Lemma \ref{fractionalsobolevwithTypeI}. We remark that \eqref{lambda'def}-\eqref{deltacondition} imply that $\lambda'\in(1,\frac{3}{2})$, so such a quantity is well defined for suitable weak solutions. 
 We will need the proposition below taken from \cite{Ser09}.
\begin{proposition}\label{localboundaryregstokes}
Let $m,n$ and $s$ be such that $1<m<\infty$, $1<n\leq \infty$ and $m\leq s<\infty$. Suppose $\nabla u\in L_{n}^{t}L_{m}^{x}(Q^{+}(1))$, $p\in L_{n}^{t}L_{m}^{x}(Q^{+}(1))$ and $f\in L_{n}^{t}L_{s}^{x}(Q^{+}(1))$.
In addition, suppose that
$$
\partial_{t}u-\Delta u+\nabla p=f,\qquad\nabla\cdot u=0\qquad \mbox{in}\quad Q^{+}(1)
$$
and suppose $u$ satisfies the boundary condition 
$$u=0\,\,\,\,\,\,\textrm{on}\,\,\,\,\,\,{x_{3}=0}.$$
Then, we conclude that $\nabla p\in L_{n}^{t}L_{s}^{x}(Q^{+}(1/2))$. Furthermore, the estimate
\begin{multline*}
\|\nabla p\|_{L_{n}^{t}L_{s}^{x}(Q^{+}(1/2))} \\
\leq c(s,n,m)\big(\|f\|_{L_{n}^{t}L_{s}^{x}(Q^{+}(1))}+\| u\|_{L_{n}^{t}L_{m}^{x}(Q^{+}(1))}+\|\nabla  u\|_{L_{n}^{t}L_{m}^{x}(Q^{+}(1))}+\|p\|_{L_{n}^{t}L_{m}^{x}(Q^{+}(1))}\big)
\end{multline*}
holds.
\end{proposition}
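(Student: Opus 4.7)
The plan is a classical localisation for the non-stationary Stokes system in the half-space. The idea is to cut $(u,p)$ off to obtain a problem on the entire half-space with no-slip boundary condition and zero initial data, correct the cut-off velocity by a Bogovskii term to restore incompressibility, and then invoke the $L_n^t L_s^x$-maximal regularity theory for the unsteady Stokes system in $\R^3_+$. The apparent asymmetry in the conclusion, namely that $u$, $\nabla u$ and $p$ appear on the right-hand side only in the weaker $L_n^t L_m^x$ norm while $\nabla p$ is controlled in the stronger $L_n^t L_s^x$ norm on the smaller set $Q^+(1/2)$, is reconciled by interior elliptic regularity: the commutator terms produced by the cut-off are supported in a half-annulus disjoint from $B^+(1/2)$, so they generate a pressure component that is harmonic near $B^+(1/2)$ and thus gains arbitrarily high integrability there.

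First I would fix $\zeta\in C_{c}^\infty(B(1)\times(-1,1))$ with $\zeta\equiv 1$ on $Q^+(1/2)$, and set $v=\zeta u$, $\pi=\zeta p$. The pair $(v,\pi)$ satisfies a Stokes system on $\R^3_+\times(-1,0)$ with no-slip on $\partial\R^3_+$, zero initial data, source
$$F=\zeta f-2\nabla\zeta\cdot\nabla u-u\,\Delta\zeta+u\,\partial_{t}\zeta+p\,\nabla\zeta,$$
and nonzero divergence $g=u\cdot\nabla\zeta$. Choosing a Bogovskii operator $\mathcal{B}$ on a half-annulus $\Omega\supset\supp\nabla\zeta\cap\R^3_+$ that preserves no-slip on $\partial\R^3_+$, set $w=v-\mathcal{B}g$. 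Then $(w,\tilde{\pi})$, for an associated pressure $\tilde{\pi}$ which agrees with $p$ on $Q^+(1/2)$ up to an additive function of time, is an incompressible Stokes flow with source $\tilde{F}=F+\Delta\mathcal{B}g-\partial_{t}\mathcal{B}g$ and no-slip on $\partial\R^3_+$.

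Next I would apply the $L_n^t L_s^x$-maximal regularity for the unsteady Stokes system in $\R^3_+$ with no-slip (Solonnikov, Giga--Sohr), or equivalently the resolvent pressure decomposition recalled in Section \ref{sec.ck}, to obtain $\nabla\tilde\pi$ in terms of $\tilde F$. The principal piece $\zeta f$ of $\tilde F$ is bounded in $L_n^t L_s^x(\R^3_+)$ by $\|f\|_{L_n^t L_s^x(Q^+(1))}$, yielding the first term on the right-hand side of the stated inequality. All remaining contributions to $\tilde F$ contain a derivative of $\zeta$, and are therefore supported in $\supp\nabla\zeta\cap\R^3_+\subset B^+(1)\setminus\overline{B^+(1/2)}$. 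They are bounded in $L_n^t L_m^x$ (not $L_n^t L_s^x$) by $\|u\|_{L_n^t L_m^x(Q^+(1))}+\|\nabla u\|_{L_n^t L_m^x(Q^+(1))}+\|p\|_{L_n^t L_m^x(Q^+(1))}$. Since these source terms vanish on $B^+(1/2)$, the corresponding piece of $\tilde\pi$ is harmonic in a neighbourhood of $B^+(1/2)$ (with the natural trace coming from the no-slip condition), and interior regularity for harmonic functions in the half-space upgrades the $L_m^x$ control to an $L_s^x$ control on $B^+(1/2)$.

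The main technical obstacle is the estimate of $\partial_{t}\mathcal{B}g$, since $\partial_{t}u$ is not independently controlled. One bypasses this by commuting $\partial_{t}$ with $\mathcal{B}$ and rewriting $\partial_{t}g=\nabla\zeta\cdot\partial_{t}u$ via the Stokes equation $\partial_{t}u=\Delta u-\nabla p+f$; after integration by parts against the smoothing kernel of $\mathcal{B}$ (which moves derivatives off $u$ and $p$ at the expense of an $L^m$-bounded factor supported in $\Omega$), one obtains control by $\|\nabla u\|_{L_n^t L_m^x}+\|p\|_{L_n^t L_m^x}+\|f\|_{L_n^t L_s^x}$. Collecting all the contributions and using that $\nabla\tilde\pi=\nabla p$ on $Q^+(1/2)$ then yields the claimed bound.
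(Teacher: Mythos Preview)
The paper does not prove this proposition; it simply quotes it from Seregin \cite{Ser09}. So you should be aware that there is no in-paper proof to match. That said, your sketch has a genuine gap in the step where you upgrade the spatial integrability from $m$ to $s$.

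Your localisation, Bogovskii correction, and appeal to half-space maximal regularity are all standard and fine when $s=m$. The issue is the one-shot argument for $s>m$. You claim that the commutator terms, being supported in the half-annulus $B^+(1)\setminus\overline{B^+(1/2)}$, generate a pressure piece that is harmonic on $B^+(1/2)$ ``with the natural trace coming from the no-slip condition'', and that interior regularity for harmonic functions then lifts $L_m^x$ to $L_s^x$ on $B^+(1/2)$. Taking the divergence of the Stokes system indeed shows that this pressure piece is harmonic in the \emph{open} half-ball $B^+(1/2)$, but $B^+(1/2)$ contains the flat boundary $\Gamma(0,1/2)$, and the pressure has no clean boundary condition there: from the third component of the equation one only gets $\partial_3 p=\partial_3^2 u_3$ on $\Gamma$, which you do not control. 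Interior elliptic estimates therefore do not reach $\Gamma(0,1/2)$, and your upgrade from $L_m^x$ to $L_s^x$ is unjustified precisely at the place where the proposition is nontrivial. This boundary obstruction is exactly the phenomenon the paper emphasises when it writes ``$p\not\simeq u\otimes u$'' in the half-space.

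The argument actually used by Seregin in \cite{Ser09} proceeds by bootstrap rather than by a single harmonic-regularity step: one first proves the case $s=m$ (your localisation argument gives $u\in W^{2,1}_{m,n}$ and $\nabla p\in L^t_nL^x_m$ on the smaller half-cylinder), then applies a parabolic Sobolev embedding to put $u,\nabla u,p$ into $L^t_nL^x_{m_1}$ with $m_1>m$, and iterates finitely many times until $m_k\ge s$. Each step uses only maximal regularity at the current exponent, so no boundary information on the pressure is ever needed. Your treatment of $\partial_t\mathcal{B}g$ via the equation is workable in this iterative framework (at each step one already controls $\nabla^2 u$ and $\nabla p$ in the previous $L^t_nL^x_{m_{k-1}}$), whereas in your one-shot scheme it is circular because you invoke $\Delta u$ and $\nabla p$ before having estimated them.
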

\begin{remark}\label{constantsscaling}
We will use that result for $\lambda'=m=n$ and $f=0$, a scaling argument shows
\begin{equation}\label{harmscaled}
\|\nabla p\|_{L^{t}_{\lambda'}L^{x}_{s}(Q^{+}(\frac{r}{4}))}\leq \frac{r^{\frac{3}{s}+\frac{2}{\lambda'}}}{r^{\frac{5}{\lambda'}+1}}\Big(\frac{1}{r}\|u\|_{L_{\lambda'}(Q^{+}(\frac{r}{2}))}+\|\nabla u\|_{L_{\lambda'}(Q^{+}(\frac{r}{2}))}+\|p-(p)_{B^{+}(\frac{r}{2})}\|_{L_{\lambda'}(Q^{+}(\frac{r}{2}))}\Big)
\end{equation} 
\end{remark}
Let us now state the main estimate for the pressure when the Type I bound is satisfied. We state this as a lemma.
\begin{lemma}\label{pressuredecay}
Suppose that $(u,p)$ is a suitable weak solution to the Navier-Stokes equations on $Q^{+}(1)$. Furthermore, suppose $u$ satisfies the Type I bound \eqref{e.typeIR3+loc} and that $\lambda'$, $\beta$ and $\delta$ are as in Lemma \ref{fractionalsobolevwithTypeI}. Then there exist $\theta \in (0,1)$ and $\mu\in (0,1)$ such that for any $\lambda'<s<\infty$, $r\in (0,1)$ and $\tau\in (0,\frac{1}{4})$ we have
\begin{equation}\label{pressureestimatessmallscales}
D_{\lambda'}(p,\tau r)\leq C\tau^{(3-\frac{2}{\lambda'}-\frac{3}{s})\lambda'}(E(u,r)+A(u,r))^{\frac{\lambda'}2}+C\tau^{(3-\frac{2}{\lambda'}-\frac{3}{s})\lambda'} D_{\lambda'}(p,r)+$$$$+\frac{C}{\tau^{5-2\lambda'}}
\Big({(A(u,r))}^{\theta}+(E(u,r)+A(u,r))^{\mu}\Big).
\end{equation}
Here, the constant $C(M, \lambda',s,\beta,\delta)\in(0,\infty)$.
\end{lemma}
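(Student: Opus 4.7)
The plan is to prove Lemma \ref{pressuredecay} by splitting the pressure $p = \tilde p + \pi$ on $Q^+(1/2)$, where $\tilde p$ comes from a Stokes problem driven by a cutoff of the Navier-Stokes nonlinearity (estimated via Proposition \ref{prop.changKang}) and $\pi = p - \tilde p$ is the pressure of a homogeneous Stokes system on a smaller cylinder, whose gradient enjoys higher spatial integrability thanks to Proposition \ref{localboundaryregstokes}, translating into a decay factor in $\tau$. By scale invariance of $A$, $E$, $D_{\lambda'}$, $H_{\lambda'}$ and $F_{\lambda',\beta}$ under the Navier-Stokes rescaling, it suffices to treat $r = 1$.

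Concretely, fix $\varphi \in C^\infty_c(B(1/2))$ with $\varphi \equiv 1$ on $B(1/4)$ and let $(\tilde u, \tilde p)$ be the solution of
\begin{equation*}
\partial_t \tilde u - \Delta \tilde u + \nabla \tilde p = -\nabla\cdot(\varphi\, u \otimes u),\quad \nabla\cdot \tilde u = 0 \qquad \mbox{in }\mathbb{R}^3_+ \times (-1,0),
\end{equation*}
with $\tilde u|_{\partial\mathbb{R}^3_+} = 0$ and $\tilde u(\cdot,-1) = 0$. Since $u$ vanishes on the flat boundary, so does $F := -\varphi\, u \otimes u$. Applying Proposition \ref{prop.changKang} with $p = q = \lambda'$ and $\kappa = \beta$ (legitimate by \eqref{betacondition}) gives the splitting $\tilde p = \tilde p^{Helm} + \tilde p^{harm}$. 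Using the inclusion $B^+(1/2) \subset \{0 < x_3 < 1/2\}$ to turn the $L^{x_3}_\infty$ bound on $\tilde p^{harm}$ into an $L_{\lambda'}$ bound, and expanding $\nabla F$ in terms of $|u|^2$ and $|u||\nabla u|$, one reaches
\begin{equation*}
\|\tilde p\|_{L_{\lambda'}(Q^+(1/2))}^{\lambda'} \leq C\bigl(H_{\lambda'}(u,1) + F_{\lambda',\beta}(u,1)\bigr) \leq C\bigl(A(u,1)^\theta + (A(u,1) + E(u,1))^\mu\bigr),
\end{equation*}
the last step being Lemma \ref{fractionalsobolevwithTypeI}. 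The crude bound $D_{\lambda'}(\tilde p,\tau) \leq C\tau^{-(5-2\lambda')}\|\tilde p\|_{L_{\lambda'}(Q^+(1/2))}^{\lambda'}$ accounts for the last term in \eqref{pressureestimatessmallscales}. Note that on $Q^+(1/4)$ (where $\varphi \equiv 1$), the pair $(h,\pi) := (u - \tilde u, p - \tilde p)$ solves the homogeneous Stokes system with $h = 0$ on $\{x_3 = 0\} \cap B(1/4)$.

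To estimate $\pi$, apply Proposition \ref{localboundaryregstokes} at scale $1/4$ with $n = m = \lambda'$ and the chosen $s > \lambda'$. After rescaling (Remark \ref{constantsscaling}), this yields
\begin{equation*}
\|\nabla\pi\|_{L^t_{\lambda'}L^x_s(Q^+(1/8))}^{\lambda'} \leq C\bigl(\|h\|_{L_{\lambda'}(Q^+(1/4))}^{\lambda'} + \|\nabla h\|_{L_{\lambda'}(Q^+(1/4))}^{\lambda'} + \|\pi - (\pi)_{B^+(1/4)}\|_{L_{\lambda'}(Q^+(1/4))}^{\lambda'}\bigr).
\end{equation*}
Splitting $h = u - \tilde u$ and $\pi = p - \tilde p$: H\"older's inequality (since $\lambda' < 2$) bounds the $u$-contributions by $C(A(u,1) + E(u,1))^{\lambda'/2}$; Stokes maximal regularity in the half-space (e.g.\ \cite{GS91}) bounds the $\tilde u$-contributions by $C\|F\|_{L_{\lambda'}}^{\lambda'} \leq C H_{\lambda'}(u,1)$; the $p$-contribution is $C D_{\lambda'}(p,1)$; and the $\tilde p$-contribution was controlled above. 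Combining Poincar\'e in space with H\"older transforms the $L^x_s$ regularity of $\nabla\pi$ into decay,
\begin{equation*}
D_{\lambda'}(\pi,\tau) \leq C\tau^{(3 - 2/\lambda' - 3/s)\lambda'}\|\nabla\pi\|_{L^t_{\lambda'}L^x_s(Q^+(\tau))}^{\lambda'}.
\end{equation*}
Summing the $\tilde p$ and $\pi$ contributions, absorbing the sub-dominant $\tau^{(3 - 2/\lambda' - 3/s)\lambda'}(A^\theta + (A+E)^\mu)$ into the $\tau^{-(5-2\lambda')}(\ldots)$ term (legitimate since $s > \lambda' > 3\lambda'/(\lambda'+3)$ implies $(3-2/\lambda'-3/s)\lambda' \geq -(5-2\lambda')$), and rescaling back to arbitrary $r \in (0,1)$, produces \eqref{pressureestimatessmallscales}.

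The main obstacle is the pressure splitting itself. In the whole-space the identification $p \simeq u \otimes u$ given by Calder\'on-Zygmund reduces the proof of the interior analogue of Theorem \ref{TypeIimpliesscaled} to an essentially algebraic estimate, but by the Koch-Solonnikov counterexample \cite{KochSolonnikov} this identification breaks down at the flat boundary, while the alternative $\nabla p \simeq u \cdot \nabla u$ coming from maximal regularity is too supercritical and would force the exponents $\theta, \mu \geq 1$ in Lemma \ref{fractionalsobolevwithTypeI}, ruining the iteration scheme of \cite{SerZajac, Seregin2018}. The fractional pressure estimate of Chang and Kang (Proposition \ref{prop.changKang}) lies precisely in the narrow window that makes $\theta, \mu < 1$ admissible, and this is what makes the decay estimate close.
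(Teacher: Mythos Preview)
Your proposal is correct and follows essentially the same route as the paper: split the pressure via a Stokes initial-boundary value problem driven by a cutoff of the nonlinearity, control this piece by Proposition \ref{prop.changKang} together with Lemma \ref{fractionalsobolevwithTypeI}, and handle the remaining homogeneous Stokes pressure via the local higher-integrability estimate of Proposition \ref{localboundaryregstokes} (Remark \ref{constantsscaling}) to generate the $\tau$-decay. The only cosmetic differences are that the paper works at general scale $r$ directly (with cutoff $\varphi_r$ equal to $1$ on $B^+(r/2)$) rather than reducing to $r=1$ by rescaling, and cites \cite[Theorem 1.5]{KS02} rather than \cite{GS91} for the $L_{\lambda'}$ bounds on $\tilde u$ and $\nabla\tilde u$; the structure of the argument and the final combination $D_{\lambda'}(p,\tau r)\leq C\tau^{-(5-2\lambda')}D_{\lambda'}(p^1,r)+CD_{\lambda'}(p^2,\tau r)$ are identical to yours.
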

\begin{proof}

\noindent\textbf{Step 1: splitting using an initial boundary value problem}\\
Let $\varphi_{r}\in C_{0}^{\infty}(B^{+}(r))$ with
$\varphi_{r}=1$ on $B^{+}(\frac{r}{2})$, $0\leq \varphi_r\leq 1$ and $|\nabla \varphi_r|\leq \frac{c}{r}.$
Consider the initial boundary value problem
\begin{align*}
&\partial_{t} u^{1}-\Delta u^{1}+\nabla p^{1}= -\nabla\cdot (u\otimes u\varphi_{r})\,\,\,\,\,\,\mbox{in}\quad \mathbb{R}^3_{+}\times (-r^2,0),\\
&\nabla\cdot u^{1}=0\quad\mbox{and}\quad u^{1}(x',0,t)=0,\\
&u^{1}(\cdot,-r^2)=0.
\end{align*}
Using Proposition \ref{prop.changKang} with $F:=(u\otimes u)\varphi_{r}$, $p=q=\lambda'$ and $\kappa=\beta>\frac{1}{\lambda'}$, we have 
$p^{1}=p^{Helm}_F+p^{harm}_F$, with
\begin{align*}
\|p^{Helm}_F\|_{L_{\lambda'}(-r^2,0;L_{\lambda'}(\R^3_+))}\leq\ & C\|u\otimes u\varphi_{r}\|_{L_{\lambda'}(-r^2,0;L_{\lambda'}(\R^3_+))},\\
\|p^{harm}_F\|_{L_{\lambda'}(-r^2,0;L_\infty^{x_3}(0,\infty;L_{\lambda'}^{x'}(\R^2)))}\leq\ & Cr^{(\beta-\frac{1}{\lambda'})}\big\|\|(u\otimes u)\varphi_{r}\|_{L_{\lambda'}(\R^3_+)}^{1-\beta}\|\nabla ((u\otimes u)\varphi_{r})\|_{L_{\lambda'}(\R^3_+)}^\beta\big\|_{L_{\lambda'}(-r^2,0)}.
\end{align*}
Notice also that since $u$ has zero trace on the boundary we get
$$ \|u\otimes u\|_{L_{\lambda'}(B^{+}(r))}\leq c_{univ} r\|u\cdot\nabla u\|_{L_{\lambda'}(B^{+}(r))}.$$
Using the above two facts, one can show
\begin{equation}\label{localizedpressureest1}
D_{\lambda'}(p^{1},r)\leq c_{univ} \big(H_{\lambda'}(u,r)+ F_{\lambda', \beta}(u,r)\big).
\end{equation}
Applying Lemma \ref{fractionalsobolevwithTypeI}, we can estimate both $H_{\lambda'}(u,r)$ and $F_{\lambda', \beta}(u,r) $ to obtain
\begin{equation}\label{localizedpressureest2}
D_{\lambda'}(p^{1},r)\leq C{(A(u,r))}^{\theta}+C\big(E(u,r)+A(u,r)\big)^{\mu}.
\end{equation}
Here (and throughout this proof) $C(M,\lambda',s,\beta,\delta)>0$ is a constant that may change from line to line. 
From Theorem 1.5 of \cite{KS02} and by the Poincar\'{e} inequality, we have
$$\|\nabla u^{1}\|_{L_{\lambda'}(Q^{+}(r))}\leq C_{univ}\|u\otimes u\|_{L_{\lambda'}(Q^{+}(r))} $$
and
$$ \| u^{1}\|_{L_{\lambda'}(Q^{+}(r))}\leq C_{univ}r\|u\otimes u\|_{L_{\lambda'}(Q^{+}(r))}.$$
These estimates, together with an application of Lemma \ref{fractionalsobolevwithTypeI} to estimate $H_{\lambda'}(u,r)$, gives
\begin{equation}\label{v1lowerorderterms}
\frac{1}{r^{\frac{5-2\lambda'}{\lambda'}}}\Big(\|\nabla u^{1}\|_{L_{\lambda'}(Q^{+}(r))}+\frac{\|u^{1}\|_{L_{\lambda'}(Q^{+}(r))}}{r}\Big)\leq C(A(u,r))^{\frac{\theta}{\lambda'}}.
\end{equation}

\noindent\textbf{Step 2: the harmonic part of the pressure}\\
The strategy for estimating the harmonic part of the pressure is in the same spirit to that in \cite{Ser02}, though our context is different.
Define $u^{2}:=u-u^{1}$ and $p^{2}=p-p^{1}$. Then we have
\begin{align*}
&\partial_{t}u^{2}-\Delta u^2+\nabla p^{2}=0,\quad \nabla\cdot v^{2}=0\,\,\,\,\,\mbox{in}\,\,\,\,\,\, Q^{+}(\tfrac{r}{2}),\\
&v^{2}(x',0,t)=0.
\end{align*}
Using Remark \ref{constantsscaling} and $u^{2}= u-u^{1}$, we have
\begin{equation}\label{harmpreslocalreg}
\|\nabla p^{2}\|_{L^{t}_{\lambda'}L^{x}_{s}(Q^{+}(\frac{r}{4}))}\leq J_1+J_2.
\end{equation}
Here,
\begin{equation}\label{Jdef}
J_1:=\frac{r^{\frac{3}{s}+\frac{2}{\lambda'}}}{r^{\frac{5}{\lambda'}+1}}\Big(\frac{1}{r}\|u\|_{L_{\lambda'}(Q^{+}(\frac{r}{2}))}+\|\nabla u\|_{L_{\lambda'}(Q^{+}(\frac{r}{2}))}+\|p-(p)_{B^{+}(\frac{r}{2})}\|_{L_{\lambda'}(Q^{+}(\frac{r}{2}))}\Big)
\end{equation}
and
\begin{equation}\label{JJdef}
J_2:=\frac{r^{\frac{3}{s}+\frac{2}{\lambda'}}}{r^{\frac{5}{\lambda'}+1}}\Big(\frac{1}{r}\|u^1\|_{L_{\lambda'}(Q^{+}(\frac{r}{2}))}+\|\nabla u^1\|_{L_{\lambda'}(Q^{+}(\frac{r}{2}))}+\|p^1-(p^1)_{B^{+}(\frac{r}{2})}\|_{L_{\lambda'}(Q^{+}(\frac{r}{2}))}\Big).
\end{equation}
By applying H\"{o}lder's inequality one sees that
\begin{equation}\label{Jest}
J_1\leq r^{\frac{3}{s}+\frac{2}{\lambda'}-3}(E(u,r)^{\frac12}+A(u,{r})^{\frac12}+ (D_{\lambda'}(p,{r}))^{\frac{1}{\lambda'}}).
\end{equation}
Next, using \eqref{localizedpressureest2} and \eqref{v1lowerorderterms} we see that
\begin{equation}\label{JJest}
J_2\leq Cr^{\frac{3}{s}+\frac{2}{\lambda'}-3}\Big((A(u,{r}))^{\theta}+(E(u,r)+A(u,{r}))^{\mu}\Big)^{\frac{1}{\lambda'}}.
\end{equation}
Thus, 
\begin{equation}\label{nablap2est}
\|\nabla p^{2}\|_{L^{t}_{\lambda'}L^{x}_{s}(Q^{+}(\frac{r}{4}))}\leq r^{\frac{3}{s}+\frac{2}{\lambda'}-3}(E(u,r)^{\frac12}+A(u,{r})^{\frac12}+ (D_{\lambda'}(p,{r}))^{\frac{1}{\lambda'}})$$$$+Cr^{\frac{3}{s}+\frac{2}{\lambda'}-3}\Big((A(u,{r}))^{\theta}+(E(u,r)+A(u,{r}))^{\mu}\Big)^{\frac{1}{\lambda'}}.
\end{equation}
Notice that for $\tau\in (0,\frac{1}{4})$ we have
$$ (D_{\lambda'}(p^2, \tau r))^{\frac{1}{\lambda'}}\leq  \tau^{3-\frac{2}{\lambda'}-\frac{3}{s}}r^{3-\frac{2}{\lambda'}-\frac{3}{s}}\|\nabla p^{2}\|_{L^{x}_{s}L^{t}_{\lambda'}(Q^{+}(\frac{r}{4}))}. $$
Using this and \eqref{nablap2est}, we deduce that
\begin{equation}\label{p2estsmallercylinder}
D_{\lambda'}(p^2, \tau r)\leq {\tau}^{(3-\frac{3}{s}-\frac{2}{\lambda'})\lambda'}\Big((E(u,r)+A(u,{r}))^{\frac{\lambda'}2}+ D_{\lambda'}(p,{r})\Big)$$$$+C\tau^{(3-\frac{3}{s}-\frac{2}{\lambda'})\lambda'}\Big((A(u,{r}))^{\theta}+(E(u,r)+A(u,{r}))^{\mu}\Big).
\end{equation}
Finally, we note that $D_{\lambda'}(p, \tau r)\leq  \frac{C}{\tau^{5-2\lambda'}} D_{\lambda'}(p^{1}, r)+ CD_{\lambda'}(p^{2}, \tau r)$. We then use \eqref{localizedpressureest2} and the above estimate to get the desired conclusion.
\end{proof}
\subsection{Boundedness of scaled energy and pressure under Type I assumption}
We let $\lambda'$ be as in Lemma \ref{fractionalsobolevwithTypeI} and let $\frac{1}{\lambda}:=1-\frac{1}{{\lambda}^{'}}$.
In this subsection, we introduce the scaled quantities
\begin{equation}\label{scaledvelocityL3}
C(u,r):= \frac{1}{r^2}\int\limits_{Q^{+}(r)} |u|^{3} dxdt,
\end{equation}
\begin{equation}\label{scaledvelocityLlambda}
G_{\lambda}(u,r):= \frac{1}{r^{5-\lambda}} \int\limits_{Q^{+}(r)} |u|^{\lambda} dxdt.
\end{equation}
Now it suffices to gather the previous estimates in order to prove Theorem \ref{TypeIimpliesscaled}.

\begin{proof}[Proof of Theorem \ref{TypeIimpliesscaled}] 
Fix $\tau\in (0,\frac{1}{4})$ and let $r\in (0,1)$. By choosing an appropriate test function in the local energy inequality \eqref{localenergyequalityboundary} we obtain
\begin{equation}\label{localenergyest}
A(u, \tau r)+E(u, \tau r) \leq c(\tau)\big( C(u,r)^{\frac{2}{3}}+C(u,r)+(G_{\lambda}(u,r))^{\frac{1}{\lambda}}(D_{\lambda'}(p,r))^{\frac{1}{\lambda'}}\big)
\end{equation}
From Lemma \ref{interpolationwithTypeI} along with the fact that $\lambda<4$, we see that
\begin{equation}\label{L3typeI}
C(u,r)\leq C(M) A(u,r)^{(\frac12)_{+}}
\end{equation}
and
\begin{equation}\label{LlambdatypeI}
G_{\lambda}(u,r)\leq C(M,\lambda) A(u,r)^{(\frac\lambda2-1)_{+}}.
\end{equation}
Using \eqref{localenergyest} and the above two estimates, together with Young's inequality, we arrive at the following estimate
\begin{equation}\label{localenergyestyoungs}
A(u, \tau r)+E(u, \tau r)\leq \frac{1}{4}\big(A(u,  r)+E(u,  r)+D_{\lambda'}(p,r)\big)+C(\lambda,\tau,M).
\end{equation}
Let $\mathcal{E}(u,p,r):=A(u, r)+E(u,  r)+D_{\lambda'}(p,r).$
From \eqref{localenergyestyoungs} we have
\begin{equation}\label{Eest1}
\mathcal{E}(u,p,\tau r)\leq \frac{1}{4}\mathcal{E}(u,p,r)+C(\tau,\lambda,M)+D_{\lambda'}(p,\tau r).
\end{equation}
Applying Lemma \ref{pressuredecay} (with an appropriate choice of $s$ and $\tau(s,M,\lambda, \beta)$) and using Young's inequality, we obtain the bound
\begin{equation}\label{pressureiterative}
D_{\lambda'}(p,\tau r)\leq \frac{1}{4} \mathcal{E}(p,r)+ C(M,\beta,s,\lambda',\tau).
\end{equation}
Combining with \eqref{Eest1} then gives 
\begin{equation}\label{Eestiterative}
\mathcal{E}(u,p, \tau r)\leq \frac{1}{2} \mathcal{E}(u,p,r)+C(M,\beta,s,\lambda',\tau).
\end{equation}
An iterative argument then shows
\begin{equation}\label{Euniformbound}
\sup_{0<r\leq 1} \mathcal{E}(u,p,r) \leq C'\big(M,\beta,s,\lambda,\tau, \mathcal{E}(u,p,1)\big).
\end{equation}
Using Lemma 3.2 in \cite{Mik09} this also implies that
$$\sup_{0<r\leq 1} D_{\frac{3}{2}}(p,r)\leq C''\big(M,\beta,s,\lambda,\tau, A(u,1), E(u,1), D_{\frac{3}{2}}(p,1)\big).$$
Here, we have used that $\lambda'<\frac{3}{2}$ implies that $D_{\lambda'}(p,1)\leq D_{\frac{3}{2}}(p,1)$. 
This concludes the proof of Theorem \ref{TypeIimpliesscaled}.
\end{proof}

\section{Geometric regularity criteria near a flat boundary with vorticity alignment imposed on concentrating sets}
\label{sec.four}

The proof of Theorem \ref{CAnconcentratingglobal} reduces to showing that Proposition \ref{2Dreductionbycompactness} stated in the Introduction holds. Notice that \eqref{Morreybound} reads 
\begin{equation}\label{e.morreybound}
\sup_{r>0} \big\{A(\bar{u}, r)+E(\bar{u}, r)+ D_{\frac{3}{2}}(\bar{p}, r)\big\}\leq M'.
\end{equation}
After proving this proposition, we will then explain how it can be used to prove Theorem \ref{CAnconcentratingglobal}.

\begin{proof}[Proof of Proposition \ref{2Dreductionbycompactness}]
First note that by rescaling $$(\bar{\bar{u}}(x,t), \bar{\bar{p}}(x,t)):= (\sqrt{t_{0}} \bar{u}(\sqrt{t_{0}}x, t_{0}t), t_{0}\bar{p}(\sqrt{t_{0}}x, t_{0}t)$$
it suffices to consider $t_{0}\equiv 1$.\\
\noindent\textbf{Step 1: setting up the contradiction argument}\\
The proof is by contradiction. Suppose the statement is false, then there exists a sequence of mild solutions $(u^{(k)}, p^{(k)})$ to the Navier-Stokes equations on $\mathbb{R}^3_{+}\times (-\infty,0)$, which are also suitable weak solutions on $\mathbb{R}^3_{+}\times (-\infty,0)$, and $\gamma^{(k)}\uparrow \infty$ such that the following holds true:
\begin{equation}\label{TypeIboundsequences}
|u^{(k)}(x,t)|\leq \frac{M}{\sqrt{-t}}
\end{equation}
and
\begin{equation}\label{energyuniformbdd}
\sup_{R>0}\{A(u^{(k)},r)+E(u^{(k)},r)+D_{\frac{3}{2}}(p^{(k)},r)\}\leq M'.
\end{equation}
For $i=2,3$ and $\omega^{(k)}:= \nabla \times u^{(k)}$ we have \begin{equation}\label{sequencealmost2D}
\omega^{(k)}(x,-1)\cdot\vec{e}_{i}=0\,\,\,\,\textrm{for}\,\,\,\,x\in B^{+}(\gamma^{(k)}),\footnote{Recall that $\gamma^{(k)} \uparrow\infty$.}
\end{equation}
and  for all $k\in\mathbb{N}$, $u^{(k)}$ has a singular point at $(x,t)=(0,0)$.

\noindent\textbf{Step 2: passage to the limit}\\
Using \eqref{energyuniformbdd} and local  boundary regularity theory for the linear Stokes system (see \cite{Ser09}) we get that for all $r>0$ 
\begin{equation}\label{higherderivativebounds}
\sup_{k} \Big(\|u^{(k)}\|_{W^{2,1}_{\frac{9}{8}, \frac{3}{2}}(Q^{+}(r))}+ \|\nabla p^{(k)}\|_{L_{\frac{9}{8},\frac{3}{2}}(Q^{+}(r))}\Big)<\infty.
\end{equation}
From this, \eqref{energyuniformbdd} and the compact embedding 
$$W^{2,1}_{\frac98,\frac32}(B^+(K)\times(-1,0))\Subset C^0([-1,0];L^\frac98(B^+(K)))$$ 
(see \cite[Lemma 1, p.71]{simon1986compact}), we see that $u^{(k)}$ has a subsequence that converges to a limiting suitable weak solution $u^{(\infty)}$
\begin{equation}\label{strongconvergence}
\lim_{k\rightarrow\infty} \|u^{(k)}-u^{(\infty)}\|_{L_{3}(Q^{+}(r))}=0\,\,\,\textrm{for}\,\,\,\textrm{all}\,\,\,r>0.
\end{equation}
From \eqref{TypeIboundsequences}-\eqref{energyuniformbdd}, we see that
\begin{equation}\label{limitTypeI}
|u^{(\infty)}(x,t)|\leq \frac{M}{\sqrt{-t}}
\end{equation}
and 
\begin{equation}\label{limitscaledenergy}
\sup_{r>0}\{A(u^{(\infty)},r)+E(u^{(\infty)},r)+D_{\frac{3}{2}}(p^{(\infty)},r)\}\leq M'.
\end{equation}
Next note that the Duhamel formulation of the problem
\begin{align*}
\partial_{t}W-\Delta W+\nabla P= \nabla\cdot F\,\,\,\,\,\,\,(x,t)\in \mathbb{R}^3_{+}\times (0,\infty)\\
\nabla\cdot W=0,\,\,\,\,W|_{\partial \mathbb{R}^3_{+}}=0,\,\,\,\,\,\, W(x,0)= W_{0}(x)
\end{align*}
is continuous with respect to $L^{\infty}$ weak$^\star$ convergence of $W_{0}$ and $F$.\footnote{This can be inferred from arguments from p.555-556 of \cite{barker2015ancient}.} Consequently, from \eqref{TypeIboundsequences} the fact that $u^{(k)}$ are assumed to be mild solutions on $\mathbb{R}^3_{+}\times (0,\infty)$, we see that $u^{(\infty)}$ is a mild solution on $\mathbb{R}^3_{+}\times (-\infty,0)$. 
From \eqref{almost2D}, \eqref{higherderivativebounds} and the fact that $L_{\infty}$ mild solutions are smooth, we infer that
\begin{equation}\label{2Dinitialdata}
\omega^{(\infty)}(x,-1):= \nabla\times u^{(\infty)}(x,-1)= (\omega^{(\infty)}_{1},0,0)\,\,\,\,\textrm{for all}\,\,\,\,\,x\in\mathbb{R}^3_{+}.
\end{equation}
It is important to notice that \eqref{2Dinitialdata} holds pointwise, for all $x\in\R^3_+$, and not only almost everywhere. This is needed to get that $u^{(\infty)}$ is two-dimensional, see Step 3 below. 
Using the assumptions that $u^{(k)}$ has a singular point at the space-time origin for all $k$, \eqref{strongconvergence} and Lemma \ref{stabilitysingularpointshalfspace}, we see that $u^{(\infty)}$ has a singular point at $(x,t)=(0,0)$.

\noindent\textbf{Step 3: obtaining the contradiction}
\\
From \eqref{2Dinitialdata} and \eqref{limitTypeI}, we see that
\begin{equation}\label{u1laplace}
\Delta u^{(\infty)}_{1}=0,\,\,\,\ u^{(\infty)}_{1}|_{\partial \mathbb{R}^3_{+}=0}\,\,\,\,\, \mbox{and}\quad u^{(\infty)}_{1}\in L_{\infty}(\mathbb{R}^3_{+}).
\end{equation}
By the classical Liouville theorem for the Laplace equation, we infer $u^{(\infty)}_{1}\equiv 0$. Using this and \eqref{2Dinitialdata}, one can conclude that $u^{(\infty)}(x,-1)$ is two-dimensional, i.e.\begin{equation}\label{baru2Dinitialdata}
u^{(\infty)}(x,-1)=(0, u^{(\infty)}_{2}(x_{2}, x_{3}, -1), u^{(\infty)}_{3}(x_{2}, x_{3}, -1)).
\end{equation}
This, along with the scaled energy bound, implies that
$$ \sup_{r>0} \int\limits_{B^{+}_{\mathbb{R}^2}(r)} |u^{(\infty)}(y,-1)|^2 dy<\infty.$$
Hence, 
\begin{equation}\label{limitminus1}
u^{(\infty)}(x,-1) \in L_{\infty}(\mathbb{R}^3_+) \cap L_{2}(\mathbb{R}^2_+).
\end{equation}
From such initial data, we may construct local in time two-dimensional mild solutions \cite{baejin}. Similar arguments to \cite{barker2015ancient} (specifically p.566-568) show that this solution also satisfies the three-dimensional Duhamel formula.
 Using that $u^{(\infty)}$ is a mild solution on $\mathbb{R}^3_{+}\times (-\infty,0)$ and that $u^{(\infty)}$ is bounded locally in space-time, we may use the uniqueness of $L_{\infty}$ mild solutions to the Navier-Stokes equations in the case where the domain is a half-space (Theorem 1.3 in \cite{baejin}). One then concludes that when $t\in (-1,0)$, $u^{(\infty)}(x,t)$  spatially depends only on $(x_{2}, x_{3}).$ 
 This, in conjunction with \eqref{limitscaledenergy}, implies that for $t\in [-1,0)$ we have
 $$ \sup_{r>0}\int\limits_{B_{\mathbb{R}^2}^{+}(r)} |u^{(\infty)}(y,t)|^2 dy<\infty.$$
 Hence,
 \begin{equation}\label{limitfiniteKE}
u^{(\infty)}\in L^{\infty}_{t}(-1,0; L_{2}(\mathbb{R}^2_{+})).
 \end{equation}
 Using \eqref{limitTypeI}, we see that $$u^{(\infty)}\otimes u^{(\infty)} \in L_{2,loc}^{t}([-1, 0); L_{2}^{x}(\mathbb{R}^2_+)).$$
 This implies that $u^{(\infty)}$ satisfies the energy inequality. Thus $u^{(\infty)}$ is a two-dimensional weak Leray-Hopf solution for $t\in [-1,0)$, with initial data $u^{(\infty)}(\cdot,-1)$. It is known that two-dimensional weak Leray-Hopf solutions are infinitely differentiable in space and time for all positive times \cite{CFbook}, 
 hence $u^{(\infty)}$ cannot have a singular point at $(x,t)\in (0,0)$. Thus, we have obtained the desired contradiction.
\end{proof}
\subsection*{Proof of Theorem \ref{CAnconcentratingglobal}}
\begin{proof}
We will present the proof of \eqref{CAconditionconcentrate} here, whilst commenting on the proof of \eqref{e.vortbddreg} in Section \ref{improveremark}. 
We will take $x_{0}=0$ . The case where $x_{0}$ is in the interior can be proven similarly. Without loss of generality take $T>1$. For the proof of \eqref{CAconditionconcentrate} we will assume for contradiction that $(x,t)=(0,T)$ is a singular point of $u$.

\noindent\textbf{Step 1: Local Morrey bounds}\\
Clearly for $t\in(0,T)$, $u$ satisfies the energy inequality
\begin{equation}\label{uenergyinequality}
\|u(\cdot,t)\|_{L_{2}(\mathbb{R}^3_{+})}^2+\int\limits_{0}^{t}\int\limits_{\mathbb{R}^3_{+}} |\nabla u|^2 dxds\leq \|u_{0}\|^2_{L_{2}(\mathbb{R}^3_{+})}.
\end{equation}
Let $p$ be the pressure associated to $u$.
Using \eqref{uenergyinequality}, maximal regularity for the linear Stokes equations \cite{GS91} and  estimates for the Stokes semigroup in the half-space, 
one can infer that
\begin{equation}\label{morreyscale1}
\int\limits_{T-1}^{T}\int\limits_{B^{+}(1)} |p-(p)_{B^{+}(1)}|^{\frac{3}{2}}+|\nabla u|^2 dxds+ \sup_{T-1<s<T} \|u(\cdot,s)\|_{L_{2}(B^{+}(1))}^2 \leq C(u_0).
\end{equation}
Notice that the left hand side is bounded in terms of the initial data $u_0$ only. 
One can readily show that $u$ is a suitable weak solution to the Navier-Stokes equations on $\mathbb{R}^3_{+}\times (0,T)$. Using this, together with \eqref{morreyscale1} and the ODE blow-up rate \eqref{TypeIhalf}, we may apply Theorem \ref{TypeIimpliesscaled} 
to deduce that
\begin{multline}\label{Morreysmallscaleu}
\sup_{0<r<1}\Bigg(\frac{1}{r^2}\int\limits_{T-r^2}^{T}\int\limits_{B^{+}(r)} |p-(p)_{B^{+}(r)}|^{\frac{3}{2}} dxds+ \frac{1}{r}\int\limits_{T-r^2}^{T}\int\limits_{B^{+}(r)} |\nabla u|^2 dxds\\
+\frac{1}{r}\sup_{T-r^2<s<T}\|u(\cdot,s)\|_{L_{2}(B^+(r))}^2\Bigg)\leq M'(u_{0}, M).
\end{multline}
With applying the previous Proposition in mind, we now select $\delta= \gamma(M,M')$. Here $\delta$ is from the definition of $\mathcal{C}_{\delta, x_{0}}$ in Theorem \ref{CAnconcentratingglobal} and $\gamma$ is from Proposition \ref{2Dreductionbycompactness}.

\noindent\textbf{Step 2: zoom in on the singularity and a priori bounds}\\
Let $R^{(n)}\downarrow 0$ 
and rescale
$$u^{(n)}(y,s):= R^{(n)} u(R^{(n)}y, T+ (R^{(n)})^2 s)\,\,\,\,\textrm{and}\,\, p^{(n)}(y,s):=(R^{(n)})^{2} p(R^{(n)}y, T+  (R^{(n)})^2s), $$
for all $y\in\R^3_+$ and $s\in (-T/(R^{(n)})^{2},0)$. 
From \eqref{Morreysmallscaleu}, we have
\begin{equation}\label{apriori1main}
\sup_{n}\sup_{0<r<1/R^{(n)}}\{A(u^{(n)},r)+E(u^{(n)},r)+ D_{\frac{3}{2}}(p^{(n)}, r)\}\leq M'.
\end{equation}
From \eqref{TypeIhalf} we have
\begin{equation}\label{apriori2main}
\sup_{n}\sup_{-T/(R^{(n)})^2<s<0} (-s)^{\frac{1}{2}}\|u^{(n)}(\cdot,s)\|_{L_{\infty}}\leq M
\end{equation}
Since $u^{(n)}$ is a strong solution on $\mathbb{R}^3_{+} \times (-T/(R^{(n)})^2),0)$ we can use \eqref{apriori2main} and the same arguments in \cite{barker2015ancient}\footnote{See the proof of Theorem 1.3 in \cite{barker2015ancient}.} to infer that for any $\ep>0$ and $l,m\in \mathbb{N}$ we have
\begin{equation}\label{apriori3mainbis}
\sup_{n} \sup_{-T/(R^{(n)})^2<s<\ep} \|\partial_{t}^{l}\nabla^{m} u^{(n)}(\cdot,s)\|_{L_{\infty}}<\infty.
\end{equation} 

\noindent\textbf{Step 3: passage to the limit}\\
Using the above a priori estimates and Lemma \ref{stabilitysingularpointshalfspace} we obtain a limiting  suitable weak solution $(u^{(\infty)}, p^{(\infty)})$ to the Navier-Stokes equations on $\mathbb{R}^3_{+}\times (-\infty,0)$ with no-slip boundary condition and such that
\begin{align}
u^{(\infty)}\quad\mbox{has a singular point at}\quad (x,t)=(0,0)\notag\\
\sup_{0<r<\infty}\{ E(u^{(\infty)},r)+A(u^{(\infty)},r)+D_{\frac{3}{2}}(p^{(\infty)},r)\}\leq M'\notag\\
\sup_{-\infty<t<0} \sqrt{-t}\|u^{(\infty)}(\cdot,t)\|_{L_{\infty}(\mathbb{R}^3_{+})}\leq M
\end{align}
Moreover, arguing as in Proposition \ref{2Dreductionbycompactness}, we infer that $(u^{(\infty)}, p^{(\infty)})$ is a mild solution on $\mathbb{R}^3_{+}\times (-\infty,0)$.  
As was previously mentioned, the results from \cite{barker2015ancient} imply that 
\begin{equation}\label{mbassmooth}
\sup_{n} \sup_{-\infty<s<\ep} \|\partial_{t}^{l}\nabla^{m} u^{(\infty)}(\cdot,s)\|_{L_{\infty}}<C(\epsilon,M).
\end{equation}\\
\noindent\textbf{Step 4: reduction to Proposition \ref{2Dreductionbycompactness} when $\omega^{(\infty)}|_{\partial\mathbb{R}^3}$ is non-zero}\\
We now encounter two cases, namely
\begin{enumerate}
\item $\omega^{(\infty)}|_{\partial\mathbb{R}^3_{+}}(\cdot,-t_{0})\neq 0$ for some $t_{0}\in(0,\infty)$.
\item $\omega^{(\infty)}|_{\partial\mathbb{R}^3_{+}}(\cdot,t)=0$ for all $t\in(-\infty,0)$.
\end{enumerate}
We first treat case 1) by using \eqref{CAconditionconcentrate} to show \eqref{almost2D} holds true. This essentially follows from the same reasoning as in \cite{GM11}, but we provide full details for completeness. 
Using \eqref{apriori3mainbis} we obtain that 
\begin{equation}\label{uniformconvergence}
\omega^{(n)}(x, -t_{0})\rightarrow \omega^{(\infty)}(x,-t_{0})\,\,\,\,\,\,in\,\,C\big(\overline{B^{+}}(\gamma(M,M')\sqrt{t_{0}})\big). 
\end{equation}
Define $$S^{+}_0:=\{x\in B^{+}(\gamma(M,M')\sqrt{t_{0}}): |\omega^{(\infty)}(x,-t_{0})|>0\}.$$ 
Now, fix any $x\in S^{+}_0$. Then there exists $K\in\N\setminus\{0\}$ such that $$|\omega^{(\infty)}(x,-t_{0})|>K^{-1}.$$ 
Using \eqref{uniformconvergence}, this implies that for all $n$ sufficiently large we have
$$|\omega^{(n)}(x,-t_{0})|\geq \frac{1}{2K}\Rightarrow |\omega(R^{(n)}x,T-(R^{(n)})^2t_{0})|\geq \frac{1}{2K(R^{(n)})^2}\rightarrow\infty.$$ 
Hence, for all $x\in S^{+}_0$ we see that for $n$ sufficiently large we have
$$(R^{(n)}x, T-(R^{(n)})^2t_{0}) \in \Omega_{d}\cap C_{ \delta, x_{0}}.$$ 
Writing $\xi^{(n)}:= \frac{\omega^{(n)}}{|\omega^{(n)}|}$ and using the continuous alignment condition \eqref{CAconditionconcentrate}, we see that for any $x$ and $y$ in $S^{+}$ there exists $n$ large enough such that
$$|\xi^{(n)}(x,-t_{0})-\xi^{(n)}(y,-t_{0})|\leq C\eta(R^{(n)}|x-y|)\rightarrow 0.$$
Thus, the vorticity direction for $\omega^{(\infty)}(\cdot,-t_{0})$ points in one direction in $B^{+}(\gamma(M,M')\sqrt{t_{0}})$. By the no-slip boundary condition on $\partial\R^3_+$, we obtain that $\omega^{(\infty)}(x,-t_{0})\cdot\vec{e}_3=0$. Here we have crucially used that in case 1) $\omega^{(\infty)}(\cdot,-t_{0})$ does not vanish on the boundary, implying the vorticity direction is well defined for some spatial point on the boundary. By a suitable rotation we can then assume without loss of generality that in $B^{+}(\gamma(M,M')\sqrt{t_{0}})$, $$\omega^{(\infty)}(x,-t_{0})\cdot \vec{e}_{i}=0$$ for $i=2,3$. We conclude case 1) by relying on Proposition \ref{2Dreductionbycompactness} to show that $(0,0)$ is a regular point of $u^{(\infty)}$. This is a contradiction with the fact that $u^{(\infty)}$ has a singular point at $(x,t)=(0,0)$.
 
\noindent\textbf{Step 5: Treating the case $\omega^{(\infty)}|_{\partial\mathbb{R}^3_{+}}\equiv 0$}\\
Notice that for the case $\omega^{(\infty)}|_{\partial\mathbb{R}^3_{+}}(\cdot,t)\equiv 0$ for all $t\in(-\infty,0)$, the vorticity direction is not defined on the boundary and hence the final implication in the previous step (inferring $\omega^{(\infty)}\cdot \vec{e}_{3}$ is zero in some ball using the no-slip boundary condition) does not apply here. This possibility seems to have  not been considered in \cite{GM11} and \cite{giga2014liouville}. To overcome this difficulty we will extend the solution $u^{(\infty)}$ to the whole space and then apply a theorem regarding unique continuation for parabolic operators from \cite{ESS2003} to the vorticity equation.  This will show that $u^{(\infty)}\equiv 0$ in $\mathbb{R}^3_{+}\times (-\infty,0)$, which contradicts the fact that $u^{(\infty)}$ has a singular point at $(x,t)=(0,0)$.

First, we recall from \eqref{mbassmooth} that $u^{(\infty)}$ is infinitely differentiable in space-time with bounded derivatives on $\mathbb{R}^3_{+}\times (-\infty, \epsilon)$. Notice that $\nabla\cdot u^{(\infty)}=0$, $u^{(\infty)}|_{\partial\mathbb{R}^3_{+}}=0$ and the assumption that $\omega^{(\infty)}|_{\partial\mathbb{R}^3_{+}}(\cdot,t)\equiv 0$ for all $t\in (-\infty,0)$ implies the following. Namely, for every multiindex $\alpha$ we have 
\begin{equation}\label{vanishingallorders}
D^{\alpha} u^{(\infty)}(x_{1}, x_{2}, 0, t)=0\,\,\,\,\,\textrm{for}\,\,\textrm{all}\,\,\,(x_{1}, x_{2})\in\mathbb{R}^2\,\,\,\,\textrm{and}\,\,\,\,t\in (-\infty,0).
\end{equation}
Proceeding as in \cite{bae2008regularity}, we use \eqref{vanishingallorders} to show that there is a well defined extension of $(u^{(\infty)}, p^{(\infty)})$ to $\mathbb{R}^3\times (-\infty,0)$ denoted by $\big((u^{(\infty)})^{*}, (p^{(\infty)})^{*}\big)$ and given by
\begin{equation}\label{extend}
\big((u^{(\infty)})^{*}, (p^{(\infty)})^{*}\big)(x,t):=\big(u^{(\infty)}_{1},u^{(\infty)}_{2},-u^{(\infty)}_{3}, p^{(\infty)}\big)(x_{1},x_{2}, -x_{3},t)\,\,\,\,\,\textrm{for}\,\,\,\,x_{3}<0.
\end{equation}
Moreover, \eqref{vanishingallorders} and the arguments in \cite{bae2008regularity} give that $\big((u^{(\infty)})^{*}, (p^{(\infty)})^{*}\big)$ is a solution to the Navier-Stokes equations in $\mathbb{R}^3\times (-\infty,0)$. Now, we may use the smoothness given by \eqref{mbassmooth} to deduce for the vorticity $(\omega^{(\infty)})^{*}:= \nabla\times (u^{(\infty)})^{*}$ that
\begin{equation}\label{parabolicinequality}
|\partial (\omega^{(\infty)})^{*}-\Delta (\omega^{(\infty)})^{*}|\leq C(M,\epsilon)(|(\omega^{(\infty)})^{*}|+|\nabla(\omega^{(\infty)})^{*}|)\,\,\,\,\,\textrm{in}\,\,\,\,\,\mathbb{R}^3\times (-\infty,\epsilon).
\end{equation}
This, as well as \eqref{mbassmooth} and \eqref{vanishingallorders}, allows the applicability of a theorem regarding unique continuation for parabolic operators (Theorem 4.1 in \cite{ESS2003}). This then gives that $(\omega^{(\infty)})^{*}\equiv 0$ in $\mathbb{R}^3\times (-\infty,0)$ (since $\epsilon$ was chosen arbitrarily). One then concludes that $u^{(\infty)}$ is zero by using the classical Liouville theorem for harmonic functions, along with the no-slip boundary condition.
\end{proof}

\section{Further improvements and remarks}\label{improveremark}

\subsection{Remark regarding proof of \eqref{e.vortbddreg}}
Let us sketch how to prove \eqref{e.vortbddreg} in Theorem \ref{CAnconcentratingglobal}. We  assume for contradiction that $(x,t)=(0,T)$ is a singular point of $u$. 
In this case take  $\delta=\gamma(M,M')$, $R^{(n)}:= \sqrt{T- t^{(n)}}$
and rescale
$$U^{(n)}(y,s):= R^{(n)} u(R^{(n)}y, T+ (R^{(n)})^2 s)\,\,\,\,\textrm{and}\,\, P^{(n)}(y,s):=(R^{(n)})^{2} p(R^{(n)}y, T+  (R^{(n)})^2s), $$
for all $y\in\R^3_+$ and $s\in (-T/(R^{(n)})^{2},0)$. From the assumption in \eqref{e.vortbddreg}, we see that for $\Omega^{(n)}:=\nabla \times U^{(n)}$ we have
\begin{equation}\label{vorttozero}
\|\Omega^{(n)}(\cdot,-1)\|_{L_{\infty}(B^{+}(\gamma(M,M'))}\leq C R_{n}^2 \downarrow 0.\end{equation}
Repeating Steps 1-3 in the same way, we end up with a limiting solution $(U^{(\infty)}, P^{(\infty)})$ with the properties as described in Step 3 for $(u^{(\infty)}, p^{(\infty)})$. Additionally, from \eqref{vorttozero} we get that
$\Omega^{(\infty)}(\cdot,-1)=0$ in $B^{+}(\gamma(M,M'))$. This last fact allows us to immediately  apply Proposition \ref{2Dreductionbycompactness} to infer that $(0,0)$ is a regular point of $U^{(\infty)}$, which again gives a contradiction. 

\subsection{Improvements to Theorem \ref{CAnconcentratingglobal} for the whole-space case}

 The  above proof of Theorem \ref{CAnconcentratingglobal} demonstrates a difficulty specific to the Navier-Stokes equations in the half-space. Namely, if the vorticity direction is constant for the Navier-Stokes equations in the half-space, we  can only perform rotations that leave the half-space invariant. This creates difficulties when attempting to reduce to the situation described in Proposition \ref{2Dreductionbycompactness}. We overcame these difficulties in Steps 4 and 5.

 In comparison,  the above issue is not problematic in the whole-space setting. Consequently, for the whole-space setting we can prove a stronger result. In fact, this improves the result obtained in \cite{GM11}, as we discuss below. 
\begin{theorem}\label{CAnconcentratingglobalws}
Suppose $u$ is a mild solution to the Navier-Stokes equations in $\mathbb{R}^3\times (0,T)$  with divergence-free initial data $u_{0}\in C^{\infty}_{0}(\mathbb{R}^{3}).$ Furthermore, suppose that for $(x,t)\in \mathbb{R}^3\times (0,T)$:
\begin{equation}\label{TypeImaintheoremws}
|u(x,t)|\leq \frac{M}{\sqrt{T-t}}.
\end{equation}
Let $t^{(n)}\uparrow T$, $x_{0}\in\mathbb{R}^3$, $d>0$ and $\delta>0$. Define $\omega=\nabla\times u$, $\xi:= \frac{\omega}{|\omega|}$,  
$$\Omega_{d}:=\{(x,t)\in \mathbb{R}^3\times(0,T): |\omega(x,t)|>d\}$$ and

$$ \mathcal{C}_{t^{(n)}, \delta, x_{0}}:= \bigcup_{n}\{(x,t^{(n)}): |x-x_{0}|< \delta \sqrt{T-t^{(n)}}\}.$$
Let $\eta: \mathbb{R} \rightarrow \mathbb{R}$ be a continuous function with $\eta(0)=0$.\\
Under the above assumptions, there exists $\delta(M,u_{0})>0$ such that the following holds true:
\begin{align}
\sup_{(x,t)\in \mathcal{C}_{t^{(n)}, \delta, x_0}} |\omega(x,t)|<\infty\Rightarrow\,\,\,\,\,\,(x_{0}, T)&\quad\mbox{is a regular point of}\quad  u,
\label{e.vortbddregws}\\ 
\begin{split}\label{CAconditionconcentratews}
|\xi(x,t)-\xi(y,t)|\leq C\eta(|x&-y|)\,\,\,\,\,\,\,\,\,\,\,\,\,\,\,\textrm{in}\,\,\,\,\,\Omega_{d}\cap \mathcal{C}_{t^{(n)}, \delta, x_0}\\
&\Downarrow\\
(x_{0}, T)\quad \mbox{is a}\ & \mbox{regular point of}\quad u.
\end{split}
\end{align}
\end{theorem}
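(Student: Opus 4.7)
The plan is to adapt the contradiction-based scheme used for Theorem \ref{CAnconcentratingglobal} to the whole-space, with three simplifications that reflect the absence of a boundary. First, the interior analog of Theorem \ref{TypeIimpliesscaled} is already available in \cite{SerZajac,Seregin2018}. Second, the interior persistence of singularities from \cite{rusin2011minimal} replaces Lemma \ref{stabilitysingularpointshalfspace}. Third, since we may freely rotate in $\R^3$, neither the case split $\omega^{(\infty)}|_{\partial \R^3_+}\equiv 0$ vs.\ $\not\equiv 0$, nor the unique continuation argument of Step 5 in Section \ref{sec.four}, are needed. The main additional idea specific to Theorem \ref{CAnconcentratingglobalws} is to exploit the flexibility of the rescaling sequence highlighted in the text following Proposition \ref{2Dreductionbycompactness}, by choosing $R^{(n)} := \sqrt{T-t^{(n)}}$, so that the time slice $\{t=t^{(n)}\}$ gets mapped to the fixed slice $\{s=-1\}$ of the rescaled solution. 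This is what lets us handle alignment imposed only on time-sliced concentrating sets rather than on the full cone.

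Assume for contradiction that $(x_0,T)$ is singular; after translation take $x_0=0$. The global energy inequality for $u$, together with maximal regularity for the Stokes system and the interior version of Theorem \ref{TypeIimpliesscaled}, yields local Morrey bounds for $u$ and $p$ on $Q(r)\subset \R^3\times(0,T)$, with constants depending only on $u_0$ and $M$. Setting
\begin{equation*}
u^{(n)}(y,s):=R^{(n)} u\bigl(R^{(n)} y,T+(R^{(n)})^2 s\bigr),\qquad p^{(n)}(y,s):=(R^{(n)})^{2} p\bigl(R^{(n)} y,T+(R^{(n)})^2 s\bigr),
\end{equation*}
these Morrey bounds become uniform in $n$ on each $Q(r)$, and the Type I bound \eqref{TypeImaintheoremws} gives uniform higher-order $L^\infty$ control on $(R^{(n)})^{-2}Q$ up to small times, as in Step 2 of the proof of Theorem \ref{CAnconcentratingglobal}. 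By local Stokes regularity, compact Sobolev embedding, and the interior persistence of singularities of Rusin–\v{S}ver\'ak, a subsequence converges strongly in $L^3_{loc}$ to a mild ancient suitable weak solution $(u^{(\infty)},p^{(\infty)})$ on $\R^3\times(-\infty,0)$, with global Morrey bound $M'$, Type I bound $M$, and a singular point at $(0,0)$. By \cite{barker2015ancient}-type arguments, $u^{(\infty)}$ is smooth up to $s=\epsilon$ for any $\epsilon>0$.

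To exploit the continuous alignment, note that in the rescaled coordinates the slice $\{(x,t^{(n)}):|x|<\delta\sqrt{T-t^{(n)}}\}$ corresponds exactly to $B(\delta)\times\{s=-1\}$ for $u^{(n)}$. By uniform convergence of $\omega^{(n)}(\cdot,-1)$ to $\omega^{(\infty)}(\cdot,-1)$ on compacts, for any two points $y_\star,y_{\star\star}\in B(\delta)$ with $\omega^{(\infty)}(\cdot,-1)$ nonzero, the points $(R^{(n)}y_\star,t^{(n)})$ and $(R^{(n)}y_{\star\star},t^{(n)})$ lie in $\Omega_d\cap \mathcal C_{t^{(n)},\delta,0}$ for all $n$ large; applying \eqref{CAconditionconcentratews} and passing to the limit gives
\begin{equation*}
|\xi^{(\infty)}(y_\star,-1)-\xi^{(\infty)}(y_{\star\star},-1)|\leq \lim_{n\to\infty} C\eta(R^{(n)}|y_\star-y_{\star\star}|)=0.
\end{equation*}
Thus $\omega^{(\infty)}(\cdot,-1)$ has constant direction on the set where it is nonzero in $B(\delta)$. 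Since we are in $\R^3$ we may rotate coordinates so that $\omega^{(\infty)}(y,-1)\cdot\vec e_i=0$ for $i=2,3$ and all $y\in B(\delta)$. Choosing $\delta:=\gamma(M,M')$ from the whole-space analog of Proposition \ref{2Dreductionbycompactness}, we conclude that $(0,0)$ is a regular point of $u^{(\infty)}$, the desired contradiction. The companion statement \eqref{e.vortbddregws} follows by the strictly simpler variant already sketched in Section \ref{improveremark}: boundedness of $|\omega|$ on the concentrating slices forces $\omega^{(\infty)}(\cdot,-1)\equiv 0$ on $B(\gamma(M,M'))$ after rescaling, and the same proposition applies.

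The main obstacle is establishing the whole-space analog of Proposition \ref{2Dreductionbycompactness}. This is, however, strictly easier than the half-space version proved in Section \ref{sec.four}: the trace argument is trivial (no boundary), so from two-dimensionality and boundedness of the initial datum $u^{(\infty)}(\cdot,-1)$ at time $-1$, together with the Morrey bound, one obtains $u^{(\infty)}(\cdot,-1)\in L^\infty(\R^3)\cap L^2(\R^2)$ along planes. One then constructs a 2D whole-space mild solution from this data (via the standard 2D Leray theory in $\R^2$), verifies that it satisfies the 3D Duhamel formula as in \cite{barker2015ancient}, and invokes uniqueness of $L^\infty$ mild solutions in $\R^3$ to identify it with $u^{(\infty)}$ on $(-1,0)$. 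Global smoothness of 2D Leray–Hopf solutions \cite{CFbook} then precludes a singularity at $(0,0)$.
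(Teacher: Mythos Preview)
Your proposal is correct and follows essentially the same approach as the paper: the paper itself omits the proof, noting only that one should take $R^{(n)}:=\sqrt{T-t^{(n)}}$ and rescale accordingly, and you have faithfully filled in the details along the lines of the proof of Theorem \ref{CAnconcentratingglobal}, including the simplifications (free rotation, no boundary case split, interior versions of the scaled-energy and persistence-of-singularity results) that the paper alludes to in Section \ref{improveremark}.
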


The proof is essentially the same to that of Theorem \ref{CAnconcentratingglobal}, hence we omit it. The difference is that one takes $R^{(n)}:= \sqrt{T- t^{(n)}}$
and then rescales in the following way
$$u^{(n)}(y,s):= R^{(n)} u(R^{(n)}y, T+ (R^{(n)})^2 s)\,\,\,\,\textrm{and}\,\, p^{(n)}(y,s):=(R^{(n)})^{2} p(R^{(n)}y, T+  (R^{(n)})^2s), $$
for all $y\in\R^3_+$ and $s\in (-T/(R^{(n)})^{2},0)$.
\begin{remark}\label{compareGigaMiura}
Consider $u$, $\xi$, $u_{0}$, $T$ and $t^{(n)}$ be as in the above theorem, but assume that $u$ first loses smoothness at time $T$.  Let $\eta$ be any fixed modulus of continuity with $\eta(0)=0$. The result in \cite{GM11} implies that $\xi$ breaks the modulus of continuity $\eta$ on the set 
\begin{equation}\label{gmmodulus}
\Omega_{d}\cap (\mathbb{R}^3\times (0,t^{(n)}))\end{equation}
for all $n$ sufficiently large. Since a finite blow-up time $T$ implies the existence of a singular point $x_{0}$, our  Theorem above implies a more precise description of the vorticity direction near the blow-up time.  Namely, the modulus of continuity $\eta$ is broken on the set 
\begin{equation}\label{refinemod}
\big\{(x,t^{(n)}): |x-x_{0}|< \delta \sqrt{T-t^{(n)}}\big\}\cap \Omega_{d}
\end{equation}
 for all sufficiently large $n$. Due to \eqref{e.vortbddregws} in the above theorem, we see that for all sufficiently large $n$, \eqref{refinemod} is non empty and is a strict subset of \eqref{gmmodulus}.   
\end{remark}

\section{Geometric regularity criteria near a flat boundary: local setting}
\label{sec.local}

In this section, we prove versions of the geometric regularity criteria under local assumptions on the solution.

The proof of the geometric criteria in \cite[Theorem 1.3]{giga2014liouville} or Theorem \ref{CAnconcentratingglobal} above 
cannot be easily localised in the half-space for the  following reason. In the local setting close to boundaries, we can have boundedness of $u$ but unboundedness of the vorticity, as can be seen from the construction of bounded shear flows with unbounded derivative in the half-space \cite{Kang05,SSv10}.  Hence, there is no a priori bound such as \eqref{apriori3mainbis} that ensures that the vorticity direction converges in the space of continuous functions. But this is a crucial ingredient to use \eqref{vorticitydirection} and to thus show that the limiting flow (resulting from the rescaling procedure) is two-dimensional. This is in strong contrast with the local setting \emph{away from boundaries}, where the proof above applies mutatis mutandis. 

We show here that in spite of the aforementioned difficulties, Theorem \ref{Gigaetal} can be localised. As far as we know, this is a new result for the half-space. For the local result in the interior case, we refer to \cite{GM11}.

\begin{theorem}[localised geometric regularity criteria]\label{theo.locgeomcrit}
Let $(u,p)$ be a suitable weak solution to the Navier-Stokes equations in $Q^+(1)$. Assume the Type I condition
\begin{equation}\label{ODEratelocal}
\sup_{t\in(-1,0)}(-t)^{\frac{1}{2}}\|u(\cdot,t)\|_{L_{\infty}(B^+(1))}<\infty.
\end{equation}
Let $d$ be a positive number and let $\eta$ be a continuous function on $[0,\infty)$ satisfying $\eta(0)=0$. Assume that $\eta$ is a modulus of continuity in the $x$ variables for the vorticity direction $\xi=\omega/|\omega|$, in the sense that
\begin{equation}\label{vorticitydirectionloc}
|\xi(t,x)-\xi(t,y)|\leq \eta(|x-y|)\,\,\,\,\,\,\textrm{for}\,\,\,\,\,(x,t),(y,t)\in \Omega_{d},
\end{equation}
where $\Omega_{d}=\{(x,t)\in \overline{B^+}(1)\times (-1,0): d<|\omega(x,t)|<\infty\}$.\footnote{Notice that our definition of $\Omega_d$ here is slightly different from the one in the global setting. Indeed, in order to deduce that the blow-up flow is two-dimensional we need to use the no-slip boundary condition, as in Step 4 in Section \ref{sec.four}. Hence, we require that the continuous alignment condition is true up to the boundary. In the global setting we do not need to assume this explicitly, since by \eqref{apriori3mainbis} the flow is smooth up to the boundary.}\\
Then 
$u\in L_\infty(Q^+(\frac12))$.
\end{theorem}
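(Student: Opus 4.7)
The plan is to argue by contradiction: assume $u\notin L_\infty(Q^+(1/2))$, so there exists a singular point, which by the Type I bound \eqref{ODEratelocal} must be located at $t=0$ with $x_0\in\overline{B^+(1/2)}$ (any singularity at $t<0$ would contradict the pointwise bound locally). The interior case ($x_0\in B^+(1/2)$) reduces to \cite{GM11} after rescaling, so I focus on the harder case $x_0\in\partial\R^3_+\cap\overline{B^+(1/2)}$. First I apply Theorem \ref{TypeIimpliesscaled} at $(x_0,0)$ to upgrade \eqref{ODEratelocal} to bounded $A$, $E$, $D_{3/2}$ on small parabolic half-cylinders centered at $(x_0,0)$. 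For any $R^{(k)}\downarrow 0$, the rescaled fields
$u^{(k)}(y,s):=R^{(k)}u(R^{(k)}y+x_0,(R^{(k)})^2s)$, $p^{(k)}(y,s):=(R^{(k)})^2p(R^{(k)}y+x_0,(R^{(k)})^2s)$
inherit these bounds and the Type I estimate $|u^{(k)}(y,s)|\leq M/\sqrt{-s}$ uniformly on every compact subcylinder of $Q^+$. Standard compactness yields a subsequential limit $u^{(k)}\to u^{(\infty)}$ in $L_3$ locally, and Lemma \ref{stabilitysingularpointshalfspace} guarantees that $(u^{(\infty)},p^{(\infty)})$ is a suitable weak solution on $\R^3_+\times(-\infty,0)$ with $(0,0)$ a singular point, satisfying the global Morrey bound \eqref{e.morreybound} and the Type I estimate. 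The Duhamel-continuity argument of \cite[pp.~555--556]{barker2015ancient} then shows that $u^{(\infty)}$ is a mild ancient solution, and the results of \cite{barker2015ancient} give smoothness of $u^{(\infty)}$ up to $\partial\R^3_+$.

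The main obstacle, as noted in the text, is to transfer the continuous alignment condition \eqref{vorticitydirectionloc} to $\omega^{(\infty)}$, because the shear-flow examples of \cite{Kang05,SSv10} preclude uniform control of $\omega^{(k)}$ up to the flat boundary. I would split into two cases. In Case A, where $\omega^{(\infty)}|_{\partial\R^3_+}(\cdot,t)\equiv 0$ for all $t<0$, I reproduce Step 5 of the proof of Theorem \ref{CAnconcentratingglobal}: extend $(u^{(\infty)},p^{(\infty)})$ to $\R^3\times(-\infty,0)$ via the reflection of \cite{bae2008regularity}, apply the unique-continuation theorem of \cite{ESS2003} to the reflected vorticity equation to obtain $\omega^{(\infty)}\equiv 0$, then use the classical Liouville theorem together with no-slip to conclude $u^{(\infty)}\equiv 0$, contradicting the singularity at $(0,0)$.

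In Case B there exist $t_*>0$ and $x_*\in\partial\R^3_+$ with $\omega^{(\infty)}(x_*,-t_*)\neq 0$. The key observation is that in the open half-space the rescaled Type I bound yields uniform $L_\infty$ control on $u^{(k)}$ away from $s=0$, and interior parabolic bootstrapping upgrades the convergence $u^{(k)}\to u^{(\infty)}$ to $C^\infty_{loc}$ on compact interior subsets, giving uniform convergence of $\omega^{(k)}$ to $\omega^{(\infty)}$ there. For any two interior points $\bar x,\bar y$ where $\omega^{(\infty)}(\cdot,-t_*)\neq 0$, the inequalities $|\omega^{(k)}(\bar x,-t_*)|,|\omega^{(k)}(\bar y,-t_*)|>d(R^{(k)})^2$ hold for $k$ large, placing the unscaled counterparts in $\Omega_d$; \eqref{vorticitydirectionloc} then gives
$|\xi^{(k)}(\bar x,-t_*)-\xi^{(k)}(\bar y,-t_*)|\leq\eta(R^{(k)}|\bar x-\bar y|)\to 0$,
so $\xi^{(\infty)}(\cdot,-t_*)$ is constant on the interior nonzero set of $\omega^{(\infty)}(\cdot,-t_*)$. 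By continuity up to $\partial\R^3_+$ this constant direction extends to $x_*$, where no-slip forces $\omega^{(\infty)}_3(x_*,-t_*)=0$; a rotation about $\vec e_3$ (which preserves $\R^3_+$) then aligns the common direction with $\vec e_1$, so $\omega^{(\infty)}(\cdot,-t_*)\cdot\vec e_i=0$ for $i=2,3$ throughout $\R^3_+$. Proposition \ref{2Dreductionbycompactness} applied with $t_0=t_*$ and any ball of radius $\gamma(M,M')\sqrt{t_*}$ then yields that $(0,0)$ is a regular point of $u^{(\infty)}$, giving the desired contradiction.
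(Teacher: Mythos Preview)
Your outline has a genuine gap at the point where you assert that ``the Duhamel-continuity argument of \cite[pp.~555--556]{barker2015ancient} then shows that $u^{(\infty)}$ is a mild ancient solution.'' That argument requires each $u^{(k)}$ to satisfy a Duhamel formula on $\R^3_+$, which is available in the global setting of Theorem~\ref{CAnconcentratingglobal} because there $u$ is assumed to be a mild solution on $\R^3_+\times(0,T)$. Here, by contrast, $(u,p)$ is only a \emph{local} suitable weak solution in $Q^+(1)$: the rescaled $u^{(k)}$ are suitable weak solutions on $B^+(1/R_k)\times(-1/R_k^2,0)$, with no Duhamel representation whatsoever. Passing to the limit in a formula you do not have is not an option, and without mildness you can invoke neither the smoothness of $u^{(\infty)}$ up to $\partial\R^3_+$ from \cite{barker2015ancient} (needed for your continuity extension to $x_*$, for Case~A's unique continuation, and implicitly for your rotation step) nor Proposition~\ref{2Dreductionbycompactness} itself, which takes a mild solution as input.

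The paper fills exactly this gap by a localisation step that you have omitted: using the regular annulus lemma (Lemma~\ref{lem.regan}) to find a subcritical shell, one defines $U=\varphi u-B$ with $B$ a Bogovskii corrector, so that $(U,P)$ solves a forced Navier--Stokes system on all of $\R^3_+\times(-1,0)$ with compactly supported, subcritical forcing $F$, no-slip, and zero initial data. One then checks via maximal regularity and a Liouville theorem for Stokes that $U$ coincides with the mild solution of this forced problem. The rescaled $U^{(k)}$ are therefore genuine mild solutions (with forcing tending to zero), so $U^{(\infty)}$ is mild; since the Bogovskii corrector is subcritical it vanishes in the limit, giving $u^{(\infty)}=U^{(\infty)}$. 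The paper is explicit that this is the sole---but crucial---purpose of the localisation. Your interior $C^\infty_{loc}$ bootstrapping for the vorticity in Case~B is a plausible alternative to the paper's route via $L_{\frac98,\frac32}$-convergence of $\nabla u^{(k)}$ and Egoroff's theorem, but it too ultimately rests on the smoothness of $u^{(\infty)}$ up to the boundary, so it does not sidestep the missing mildness step.
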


For the whole proof, we assume that $(u,p)$ is a suitable weak solution in $Q^+(1)$. Though our goal is to show that any point in $Q^+(\frac12)$ is regular it is enough to show that $(0,0)$ is a regular point. Indeed, regularity is clear for any time $t<0$ by the Type I condition \eqref{ODEratelocal} and one can reduce to proving that $(0,0)$ is regular by translation and scaling. 
Hence we assume that $(0,0)$ is a space-time blow-up point. Our aim is to reach a contradiction under the continuous alignment condition \eqref{vorticitydirectionloc}. 
In addition to using the stability of singularities as in the proof of Theorem \ref{CAnconcentratingglobal} above, we introduce two further techniques:
\begin{enumerate}
\item We rely on the localisation procedure from \cite{neustupaonecomponent}, which was subsequently also utilized in \cite{AlbrittonBarker2018local} to get half-space mild bounded ancient solutions from singular (local) suitable weak solutions. The role of this step is to obtain a limiting solution that is mild from the rescaled solution. We emphasize that the localisation serves this sole purpose, but it is crucial. Indeed  
showing that the flow is two-dimensional relies upon the uniqueness of mild solutions in $L_{\infty}(\mathbb{R}^3_{+}\times (0,T))$.\label{itemuniquenessmild}
\item We go around the use of strong convergence for the vorticity by using Egoroff's theorem.
\end{enumerate}
\noindent {\bf Step 1: localisation}\\
We rely on following lemma, which follows the presentation from \cite[Lemma 2.4]{AlbrittonBarker2018local}. To the best of the authors' knowledge such localisation procedures first appeared for the Navier-Stokes equations in \cite[Lemma 1]{neustupaonecomponent}.

\begin{lemma}[Regular annulus lemma]\label{lem.regan}
There exists $0<r_1<r_2\leq \frac14$ and $0<\delta_1<\delta_2< 1$ such that 
\begin{align*}
&u\in L_\infty\big(Q^+(r_2,\delta_2)\setminus Q^+(r_1,\delta_1)\big),\\
&(u,p)\in W^{2,1}_{p,\frac32}\times W^{1,0}_{p,\frac32}\big(Q^+(r_2,\delta_2)\setminus Q^+(r_1,\delta_1)\big),\qquad\forall p\geq 1,
\end{align*}
where the domain $Q^+(r,\delta)$ for $r\in(0,1]$ and $\delta\in[-1,0)$ is defined by
\begin{equation*}
Q^+(r,\delta)=B^+(r)\times (-\delta,0).
\end{equation*}
\end{lemma}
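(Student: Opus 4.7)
The plan is to locate a parabolic annulus around $(0,0)$ that is disjoint from the singular set of $(u,p)$, and then upgrade regularity on it from $L_\infty$ to parabolic Sobolev classes by linear Stokes theory. The first step is to invoke the Caffarelli--Kohn--Nirenberg type $\epsilon$-regularity up to the flat boundary (in the form due to Seregin) to conclude that the singular set $\Sigma$ of $(u,p)$ in $\overline{Q^+(\tfrac14)}$ is closed and has vanishing one-dimensional parabolic Hausdorff measure. Introducing the parabolic distance to $(0,0)$,
\[
d(x,t):=\max\bigl(|x|,\sqrt{-t}\bigr),
\]
which is $1$-Lipschitz in the parabolic metric, the image $d(\Sigma)$ is a compact subset of $[0,\tfrac14]$ of vanishing $\mathcal H^1$-measure, and hence its complement in $(0,\tfrac14]$ is open and dense. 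Choosing any closed interval $[\rho_1,\rho_2]$ in that complement and setting $r_i:=\rho_i$ and $\delta_i:=r_i^2$ yields
\[
Q^+(r_2,\delta_2)\setminus Q^+(r_1,\delta_1)=\bigl\{(x,t)\in\overline{\R^3_+}\times(-\tfrac14,0):\ r_1\le d(x,t)<r_2\bigr\},
\]
which is a compact parabolic annulus disjoint from $\Sigma$.

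Since every point in a small parabolic neighbourhood of this annulus is a regular point in the sense of Definition \ref{suitableweaksolutionboundary}, a standard finite covering argument produces a uniform $L_\infty$ bound for $u$ on it. I would then bootstrap: on interior subregions bounded away from $\Gamma(0,r_2)$, classical Serrin-type interior regularity together with Stokes maximal regularity yields $W^{2,1}_p\times W^{1,0}_p$ for every $p$; on subregions touching the flat boundary, the no-slip condition combined with the boundary Stokes estimate of Proposition \ref{localboundaryregstokes} lets one iteratively upgrade the integrability of $(\nabla u,p)$ starting from $L_2\times L_{3/2}$ until the target class is reached. A finite covering then delivers the claimed $W^{2,1}_{p,\frac32}\times W^{1,0}_{p,\frac32}$ regularity on the annulus for every $p\ge 1$.

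The principal obstacle lies in the first step: securing the required structural information on $\Sigma$ up to the flat portion of the boundary $\Gamma(0,1)$ rests on the boundary version of the partial regularity theorem, which must apply uniformly at both interior and boundary points of $Q^+(\tfrac14)$. Once the annulus has been carved out, the remaining steps reduce to standard parabolic bootstrap arguments near a flat boundary.
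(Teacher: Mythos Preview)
Your argument is correct and follows the standard route. The paper does not supply its own proof of Lemma~\ref{lem.regan}; it simply cites \cite[Lemma 2.4]{AlbrittonBarker2018local} (tracing the idea to \cite[Lemma 1]{neustupaonecomponent}). Your sketch---invoking the boundary partial regularity theorem to get $\mathcal P^1(\Sigma)=0$, pushing $\Sigma$ forward under the $1$-Lipschitz parabolic distance $d(x,t)=\max(|x|,\sqrt{-t})$ to locate a level interval $[\rho_1,\rho_2]\subset(0,\tfrac14]$ avoided by $d(\Sigma)$, and then bootstrapping via local Stokes estimates with the time exponent pinned at $\tfrac32$ by the pressure---is exactly the argument behind those references. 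One cosmetic remark: the set $Q^+(r_2,\delta_2)\setminus Q^+(r_1,\delta_1)$ is not itself compact, but its closure in $\overline{\R^3_+}\times[-1,0]$ is, and since $[\rho_1,\rho_2]\cap d(\Sigma)=\emptyset$ that closure is still disjoint from $\Sigma$, which is what the covering and bootstrap steps actually use.
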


We fix two cut-off functions $\varphi_1\in C_{0}^{\infty}(\mathbb{R}^3)$ and $\varphi_2\in C^\infty_0(-\infty,\infty)$ such that
\begin{align*}
&0\leq\varphi_1,\, \varphi_2\leq 1,\,\,\,\,\, \varphi_1=1\,\,\,\,\textrm{on}\,\,\,\,B(r_1)\,\,\,\,\,\textrm{and}\,\,\,\,\,\varphi_1=0\,\,\,\,\,\textrm{off}\,\,\,\,\, B(r_2),\\
&\varphi_2=1\,\,\,\,\textrm{on}\,\,\,\,(-\delta_1,\delta_1)\,\,\,\,\,\textrm{and}\,\,\,\,\,\varphi_2=0\,\,\,\,\,\textrm{off}\,\,\,\,\, (-\delta_2,\delta_2).
\end{align*}
We then define $\varphi(x,t):=\varphi_1(x)\varphi_2(t)$ for all $(x,t)\in\R^3\times(-\infty,\infty)$. 
Let $B=B(u\cdot\nabla \varphi)$ be the Bogovskii operator defined on $B^+(r_2)\setminus B^+(r_1)$.\footnote{See \cite{Bogovski}, as well as Chapter III of \cite{Galdibook}.}  
In particular, for almost every $t\in (-1,0)$, $B(\cdot,t)$ solves 
\begin{equation}\label{e.Bogovsolve}
\nabla\cdot B(\cdot,t)= u(\cdot,t)\cdot\nabla\varphi(\cdot,t),\qquad\qquad B(\cdot,t)|_{\partial (B^+(r_2)\setminus B^+(r_1))}=0.
\end{equation}
Notice that Lemma \ref{lem.regan} implies $u\cdot\nabla\varphi\in W^{2,1}_{p,\frac32}(\R^3_+\times(-1,0))$. Moreover $u\cdot\nabla\varphi\in L_\frac32(-1,0;\mathring{W}^1_p(B^+(r_2)\setminus B^+(r_1)))$ and $\partial_t(u\cdot\nabla\varphi)\in L_{\frac{3}{2}}(-1,0; L_{p}(B^{+}(r_{2})\setminus B^{+}(r_{1}))$ 
for all finite $p\geq 1$. 
Since $u$ has zero trace on $\partial B^{+}(1)\cap\partial\mathbb{R}^3_{+}$, one sees that $u\cdot\nabla\varphi$ has zero average when integrated on $B^{+}(r_{2})\setminus B^{+}(r_{1})$. Hence, the operator $B$ is well defined. In view of \cite[Theorem III.3.3 and Exercise III.3.7]{Galdibook} we have
\begin{align}\label{e.bogoest}
\|\nabla B\|_{L_{\frac32}(-1,0;\mathring{W}^{1}_{p}(B^+(r_2)\setminus B^+(r_1)))}\leq\ & C(r_{1},r_{2},\delta_1,\delta_2,p)\|u\|_{W^{2,1}_{p,\frac32}(Q^+(r_2,\delta_2)\setminus Q^+(r_1,\delta_1))},\\
\label{e.bogest2}
\|\partial_{t} B\|_{L_{\frac32}(-1,0;L_{p}(B^+(r_2)\setminus B^+(r_1)))}\leq\ & C(r_{1},r_{2},\delta_1,\delta_2,p)\|u\|_{W^{2,1}_{p,\frac32}(Q^+(r_2,\delta_2)\setminus Q^+(r_1,\delta_1))}
\end{align}
for all $p\in(1,\infty)$. 
We can extend $B$ by zero on 
\begin{equation*}
\Big[\big((\R^3_+\setminus B^+(r_2))\cup B^+(r_1)\big)\times (-1,0)\Big]\cup\big[\R^3_+\times(-\infty,-1)\big]
\end{equation*}
and still denote this extension by $B$. It follows from \eqref{e.bogoest} and \eqref{e.bogest2} and the fact that $B$ is compactly supported that 
\begin{equation}\label{e.BdtBbis}
B\in W^{2,1}_{p,\frac32}(\mathbb{R}^3_{+}\times (-1,0)),
\end{equation}
for any $p\in(1,\infty)$.

On $\R^3_+ \times (-1,0)$ we define $U= \varphi u-B$ and $P=\varphi p$. 
As a result  of \eqref{ODEratelocal} and \eqref{e.BdtBbis}, the Sobolev embedding $W^{2,1}_{p,\frac32}(\mathbb{R}^3_{+}\times (-1,0))\hookrightarrow L_{\infty}(\mathbb{R}^3_{+}\times (-1,0))$ for $p>\frac92$ and the fact that $U$ is compactly supported, we have 
\begin{equation}\label{e.typeIU}
\sup_{-1<s<0} (-s)^{\frac{1}{2}}\|U(\cdot,s)\|_{L_{\infty}(\mathbb{R}^3_{+})}.
\end{equation}
Furthermore, we have 
$W^{2,1}_{2, \frac{3}{2}}(\mathbb{R}^3_{+}\times (-1,0))\hookrightarrow W^{1,0}_{2}(\mathbb{R}^3_{+}\times (-1,0))$. So using that $U$ has compact support we can infer that
\begin{equation}\label{Uenergyclass}
U\in W^{1,0}_{2}(\mathbb{R}^3_{+}\times (-1,0))\cap C_{w}([-1,0]; L_{p}(\mathbb{R}^3_{+})) 
\end{equation}
for all $1\leq p\leq 2$. 
Moreover, defining $r':= \min(\sqrt{\delta_1}, r_{1})$, it is clear from the properties of the cut-off function $\varphi$ that we have
\begin{equation}\label{locallyequal}
(U,P)=(u,p)\,\,\,\,\,\textrm{on}\,\,\,\,\,Q^{+}(r')
\end{equation}
From this,  the Type I bound \eqref{ODEratelocal} for $u$ and Theorem \ref{TypeIimpliesscaled} applied to $(u,p)$, we have
\begin{equation}\label{e.aprioriU}
\sup_{0<r<r'}\big\{A(U,r)+E(U,r)+D_{\frac32}(P,r)\big\}<\infty.
\end{equation}
We also have that $(U,P)$ solves the following Navier-Stokes equations 
\begin{align}\label{e.Ulocalisedequation}
&\partial_{t} U+U\cdot\nabla U-\Delta U+\nabla P=F,\,\,\,\,\,\, \nabla\cdot U=0\,\,\,\mbox{in}\,\,\, \R^3_+\times (-1,0),
\end{align}
with forcing term
\begin{align}\label{e.defforcing}
\begin{split}
F=\ &u(\partial_t-\Delta)\varphi-2\nabla\varphi\cdot\nabla u+(\varphi^2-\varphi)u\cdot\nabla u+\varphi u\cdot\nabla\varphi u+p\nabla\varphi\\
&-(\partial_t-\Delta)B-B\cdot\nabla (\varphi u)-\varphi u\cdot\nabla B+B\cdot\nabla B
\end{split}
\end{align}
no-slip boundary condition $U|_{\partial\mathbb{R}^3_{+}}=0$ and initial data $U(\cdot,-1)=0$. The key point is that $u$ is subcritical on the support of $\varphi$, so that the source term $F$ is subcritical. This explains our choice of cut-off function $\varphi$. Notice that in view of \eqref{e.BdtBbis}, Lemma \ref{lem.regan}, the fact that $B$ is complactly supported in $B^+(r_2)\setminus B^+(r_1)\times (-\delta_2,0)$ and known parabolic embeddings \cite{Ser06}, we infer that
\begin{equation}\label{e.intF}
F\in L_{p,\frac32}(Q^+),\qquad\forall p\in[1,\infty). 
\end{equation}
Now let  $W$ be a mild solution of  
\begin{align}\label{e.Wequation}
&\partial_{t} W-\Delta W+\nabla P=F-U\cdot\nabla U,\,\,\,\,\,\, \nabla\cdot W=0\,\,\,\mbox{in}\,\,\, \R^3_+\times (-\infty,0),
\end{align}
with no-slip boundary condition $W|_{\partial\mathbb{R}^3_{+}}=0$ and initial data $W(\cdot,-1)=0$. 
Moreover, u\-sing  \eqref{Uenergyclass} and \eqref{e.intF}, together with maximal regularity \cite{GS91},  
we see that $W\in L_{1}(-1,0; L_{\frac{9}{8}}(\mathbb{R}^3_{+}))$. Using this, \eqref{Uenergyclass}, the fact that $U$ has compact support and the Liouville theorem \cite[Theorem 5]{MMP17a} for the linear Stokes system, we see that $U\equiv W$. Namely, $U$ is a mild solution to \eqref{e.Ulocalisedequation} with initial data $U(\cdot,-1)=0$. 

\noindent\textbf{Step 2: zoom in on the singularity and a priori bounds}\\
Let $R_k\downarrow 0$ and consider 
\begin{equation}\label{e.defUk}
U^{(k)}(y,s)=R_k U(R_k y,R_k^2s),\quad P^{(k)}(y,s)=R_k^2 P(R_k y,R_k^2s)
\end{equation}
for $y\in\R^3_+$ and $s\in(-R_{k}^{-2},0)$. 
From \eqref{e.Ulocalisedequation} we have that $(U^{(k)},P^{(k)})$ is a mild solution to
\begin{align}
&\partial_sU^{(k)}+U^{(k)}\cdot\nabla U^{(k)}-\Delta U^{(k)}+\nabla P^{(k)}=R_k^3F(R_ky,R_k^2s),\qquad \nabla\cdot U^{(k)}=0\label{e.eqUk}
\end{align}
in $\mathbb{R}^3_{+}\times (-R_{k}^{-2},0)$. Moreover, 
from the bound \eqref{e.aprioriU} we have
\begin{equation}\label{e.scaledenUk}
\sup_{k}\Big(\sup_{0<r<\frac{r'}{R_{k}}}A(U^{(k)},r)+E(U^{(k)},r)+D_{\frac32}(P^{(k)},r)\Big)<\infty,
\end{equation}
and from \eqref{e.typeIU} we have that 
\begin{equation}\label{e.typeIUk}
\sup_{k}\Big(\sup_{s\in(-R_{k}^{-2},0)}(-s)^\frac12\|U^{(k)}(\cdot,s)\|_{L_\infty(\R^3_+)}\Big)<\infty.
\end{equation}

\noindent\textbf{Step 3: passage to the limit}\\
From the a priori bound \eqref{e.scaledenUk} and \eqref{e.typeIUk}, we have that there exists $(U^{(\infty)},P^{(\infty)})$ defined in $Q^+$ such that
\begin{align*}
&\sup_{0<r<\infty}A(U^{(\infty)},r)+E(U^{(\infty)},r)+D_{\frac32}(P^{(\infty)},r)<\infty,\\
&\sup_{s\in(-\infty,0)}(-s)^\frac12\|U^{(\infty)}(\cdot,s)\|_{L_\infty(\R^3_+)}<\infty.
\end{align*}
We also have in particular the strong convergence \eqref{e.strongcvUkbis} and the weak convergence $P^{(k)}\rightharpoonup P^{(\infty)}$ in $L_\frac32(Q^+(1))$. Moreover since
\begin{equation*}
\big\|R_k^3F(R_ky,R_k^2s)\big\|_{L_{p,\frac32}}\longrightarrow 0\qquad\forall p>\tfrac95,
\end{equation*}
we have that $(U^{(\infty)},P^{(\infty)})$ is a 
weak solution to the Navier-Stokes equations 
\begin{equation*}
\partial_sU^{(\infty)}+U^{(\infty)}\cdot\nabla U^{(\infty)}-\Delta U^{(\infty)}+\nabla P^{(\infty)}=0,\quad \nabla\cdot U^{(\infty)}=0
\end{equation*}
in $Q^+$. Furthermore, using that $U^{(k)}$ is a mild solution on $\mathbb{R}^3_{+}\times (-R_{k}^{-2},0)$ and the same arguments as in the proof of Proposition \ref{2Dreductionbycompactness}, we see that $U^{(\infty)}$ is a mild solution on $Q^{+}$. The Type I bound and the fact that $U^{(\infty)}$ is a mild solution imply that $U^{(\infty)}\in C^\infty(\overline{\R^3_+}\times(-\infty,0))$, see \cite[Theorem 1.3]{barker2015ancient}. 
The same reasoning as for $(U^{(k)},P^{(k)})$ shows that $(u^{(k)},p^{(k)})$ tends to a limit
$(u^{(\infty)},p^{(\infty)})$. By the stability of Type I singularities stated in Lemma \ref{stabilitysingularpointshalfspace}, the space-time point $(0,0)$ is a singular point for $(u^{(\infty)},p^{(\infty)})$. But the Bogovskii corrector $B$ belongs to a subcritical space, see \eqref{e.BdtBbis}, so it tends to zero. Thus, $u^{(\infty)}=U^{(\infty)}$ and therefore $u^{(\infty)}$ is a mild solution, is smooth, in addition to being a suitable weak solution and having a singular point at $(0,0)$. 

The conclusion that $u^{(\infty)}$ is mild and smooth is the whole purpose of the localisation procedure and of the introduction of $U^{(k)}$. We now work with $u^{(k)}$ and $u^{(\infty)}$ only.

\noindent\textbf{Step 4: convergence of the vorticity}\\
Our objective here is to prove that for all $K\in \N\setminus\{0\}$ there is a subsequence such that, 
\begin{equation}\label{e.strongcvvortst}
\int\limits_{-1}^0\Bigg(\int\limits_{B^+(K)}|\nabla u^{(k)}-\nabla u^{(\infty)}|^\frac98dx\Bigg)^\frac43ds\stackrel{k\rightarrow\infty}{\longrightarrow} 0.
\end{equation} 
Let us prove \eqref{e.strongcvvortst}. Let $K\in\mathbb N\setminus\{0\}$. We will take $k(K,r')$ sufficiently large such that $-\frac{r'}{R_k}<-K$ throughout. We first claim that 
\begin{equation}\label{e.maxreg}
M'':=\sup_{k}\|u^{(k)}\|_{W^{2,1}_{\frac98,\frac32}(Q^+(K))}+\|\nabla p^{(k)}\|_{L_{\frac32,\frac98}(Q^+(K))}<\infty.
\end{equation}
This is a consequence of the local maximal regularity for the Stokes system in the half-space (see \cite[Theorem 1.2]{Ser09}). Indeed from \eqref{e.eqUk}, $u^{(k)}$ solves the Stokes system
\begin{align*}
&\partial_su^{(k)}-\Delta u^{(k)}+\nabla p^{(k)}=-u^{(k)}\cdot\nabla u^{(k)},\quad \nabla\cdot u^{(k)}=0\quad\mbox{in}\ Q^+(2K).
\end{align*}
Moreover,  using \eqref{locallyequal}-\eqref{e.aprioriU} we see that for all $k(K,\delta')$ sufficiently large  the scaled energy \eqref{e.scaledenUk} is uniformly bounded in $k$. Hence, we also have
\begin{equation*}
u^{(k)}\cdot\nabla u^{(k)}\in L_{\frac98,\frac32}(Q^+(K))
\end{equation*}
with uniform bounds independent of the sequence parameter $k$. Furthermore, using \eqref{e.scaledenUk}, $p^{(k)}-(p^{(k)})_{B^{+}(K)}$ is uniformly bounded in $L_{\frac{3}{2}}(-K^2,0; L_{\frac{9}{8}}(B^{+}(K)))$. 
This concludes the proof of the bound \eqref{e.maxreg}. It now follows similarly to the proof of Proposition \ref{2Dreductionbycompactness} (Step 2) in Section \ref{sec.four} 
that up to extracting a subsequence
\begin{align}
&u^{(k)}\longrightarrow u^{(\infty)}\qquad\mbox{strongly in}\quad C^0([-1,0];L^\frac98(B^+(K)),\label{e.strongcvUk}\\
\label{e.strongcvUkbis}
&u^{(k)}\longrightarrow u^{(\infty)}\qquad\mbox{strongly in}\quad L^3((-1,0)\times B^+(K)).
\end{align}
We can then extract a diagonal subsequence such that this convergence holds on $B^{+}(K)\times (-1,0)$ for every $K\in\mathbb{N}$. 
To see the strong convergence of the gradient, we rely on Ehrling's inequality \cite[II.5.20]{Galdibook}. Let $\ep>0$ be fixed. Let $M''$ be as defined in \eqref{e.maxreg}. Then there exists a constant $C(\ep,M'')\in(0,\infty)$ 
such that for almost all $t\in(-1,0)$,
\begin{multline*}
\|\nabla u^{(k)}(\cdot,t)-\nabla u^{(\infty)}(\cdot,t)\|_{L^\frac98(B^+(K))}\\
\leq C(\ep,M'')\|u^{(k)}(\cdot,t)-u^{(\infty)}(\cdot,t)\|_{L^\frac98(B^+(K))}+\frac{\ep}{4M''}\|\nabla^2 u^{(k)}(\cdot,t)-\nabla^2 u^{(\infty)}(\cdot,t)\|_{L^\frac98(B^+(K))}.
\end{multline*}
Integrating in time yields
\begin{multline*}
\int\limits_{-1}^0\|\nabla u^{(k)}-\nabla u^{(\infty)}\|_{L^\frac98(B^+(K))}^\frac32ds\\
\leq C(\ep,M'')\int\limits_{-1}^0\|u^{(k)}-u^{(\infty)}\|_{L^\frac98(B^+(K))}^\frac32ds+\frac{\ep}{2M''}\int\limits_{-1}^0\|\nabla^2 u^{(k)}-\nabla^2 u^{(\infty)}\|_{L^\frac98(B^+(K))}^\frac32ds.
\end{multline*}
Hence
\begin{equation*}
\int\limits_{-1}^0\|\nabla u^{(k)}-\nabla u^{(\infty)}\|_{L^\frac98(B^+(K))}^\frac32ds
\leq C(\ep,M'')\int\limits_{-1}^0\|u^{(k)}-u^{(\infty)}\|_{L^\frac98(B^+(K))}^\frac32ds+\frac{\ep}{2},
\end{equation*}
and we finally get from \eqref{e.strongcvUk} for $k$ sufficiently large that
\begin{equation*}
\int\limits_{-1}^0\|\nabla u^{(k)}-\nabla u^{(\infty)}\|_{L^\frac98(B^+(K))}^\frac32ds\leq \frac\ep2+\frac\ep2=\ep,
\end{equation*}
which implies the convergence \eqref{e.strongcvvortst}.

\noindent\textbf{Step 5: obtaining the contradiction via the use of Egoroff's theorem}\\
As in Section \ref{sec.four} (Steps 4 and 5) we need to discuss between the case when the vorticity vanishes identically on the boundary or not. Here again two things can happen: 
\begin{itemize}
\item either the vorticity vanishes identically on the boundary for all times $t\in(-1,0)$, i.e.
\begin{equation*}
\omega^{(\infty)}(x',0,t)=0\qquad\forall x'\in\R^2,\ t\in(-1,0)
\end{equation*}
in which case we conclude that $\omega^{(\infty)}$ vanishes identically on $\R^3_+\times(-1,0)$ as in the proof of Theorem \ref{CAnconcentratingglobal} (Step 5) in Section \ref{sec.four},
\item or there exists a time $t_0\in(-1,0)$ and ${x_0}'\in\R^2$ such that $\omega^{(\infty)}({x_0}',0,t_0)\neq 0$, in which case there exist a whole interval $I_{t_0}\subset (-1,0)$ around $t_0$ where the vorticity does not vanish on the boundary. \emph{In what follows below, we consider only this case.}
\end{itemize}
Let $t_0$ and $I_{t_0}$ be fixed as above. We deduce from \eqref{e.strongcvvortst} the convergence of the vorticity in $L_{\frac98,\frac32}(B^+(K)\times(-1,0))$. Indeed, for all $K\in\N\setminus\{0\}$, for almost every $t\in I_{t_0}$
\begin{equation*}
\nabla\times u^{(k)}(\cdot,t)\longrightarrow \nabla\times u^{(\infty)}(\cdot,t)\qquad \mbox{strongly in}\quad L^\frac98(B^+(K)).
\end{equation*}
Therefore, there exists $t_1\in I_{t_0}$ such that for all $K\in\N\setminus\{0\}$
\begin{equation}\label{e.strongcvvort}
\nabla\times u^{(k)}(\cdot,t_1)\longrightarrow \nabla\times u^{(\infty)}(\cdot,t_1)\qquad \mbox{strongly in}\quad L^\frac98(B^+(K)).
\end{equation}
This convergence is the substitute to the convergence of the vorticity in $C_{loc}(Q^+)$, and as we will see in Step 5 below, it is enough to carry out the proof of Theorem \ref{theo.locgeomcrit}. 

Our final goal is to see that the limit flow at time $t_1$ is two-dimensional in the sense that $u^{(\infty)}(y,t_{1})=(0,u^{(\infty)}_2(y_2,y_3,t_{1}),u^{(\infty)}_3(y_2,y_3,t_{1}))$. We now rely on the convergence of the vorticity \eqref{e.strongcvvort}, to show that the vorticity direction for $u^{(\infty)}$ is constant. We denote by $\xi^{(\infty)}$ the vorticity direction of $u^{(\infty)}$. We have that 
\begin{align*}
S^+_0=\ &\big\{y\in\R^3_+,\, \infty>|\nabla\times u^{(\infty)}(\cdot,t_{1})|>0\big\}\\
=\ &\bigcup_{K\in\N\setminus\{0\}}\left(\big\{y\in\R^3_+,\, \infty>|\nabla\times U^{(\infty)}(\cdot,t_{1})|>K^{-1}\big\}\cap B^+(K)\right).
\end{align*}
Let us denote 
\begin{equation*}
S^+_K:=\big\{y\in\R^3_+,\, \infty>|\nabla\times U^{(\infty)}(\cdot,t_{1})|>K^{-1}\big\}\cap B^+(K).
\end{equation*}
For $K\in\mathbb N\setminus\{0\}$, the convergence \eqref{e.strongcvvort} implies that 
\begin{equation*}
\nabla\times u^{(k)}(\cdot,t_1)\longrightarrow \nabla\times u^{(\infty)}(\cdot,t_1)\quad\mbox{a.e. in}\ B^+(K).
\end{equation*}
Let $\ep>0$. By Egoroff's theorem \cite[Theorem 1.16]{EGbook}, there exists $N_{\ep,K}$ such that
\begin{align*}
&N_{\ep,K}\subset S^+_K,\qquad |N_{\ep,K}|\leq\ep,
\end{align*}
and 
\begin{equation}\label{e.unifcvego}
\nabla\times u^{(k)}(\cdot,t_1)\longrightarrow \nabla\times u^{(\infty)}(\cdot,t_1)\quad\mbox{uniformly on}\quad S^+_{K}\setminus N_{\ep,K}.
\end{equation}
There exists $k_0(K,d)$ such that for all $k\geq k_0$, we have $\frac{1}{KR_k}>d$. Hence, by the vorticity alignment condition \eqref{vorticitydirectionloc} for $u$ we have for $k\geq k_0$, for all $x,y\in S^+_K$,
\begin{equation}\label{e.vortaliXik}
\left|\xi^{(k)}(x,t_0)-\xi^{(k)}(y,t_0)\right|\leq\eta(R_k|x-y|).
\end{equation}
Therefore, for all $x,y\in S^+_K\setminus N_{\ep,K}$, the uniform convergence \eqref{e.unifcvego} implies passing to the limit in \eqref{e.vortaliXik}, we get that
\begin{equation*}
\left|\xi^{(\infty)}(x,t_1)-\xi^{(\infty)}(y,t_1)\right|=0.
\end{equation*}
This means that the vorticity direction $\xi^{(\infty)}$ is constant on $S^+_K\setminus N_{\ep,K}$. Hence, for $\varepsilon_0$ sufficiently small, the vorticity direction is constant on $$\cup_{\varepsilon<\varepsilon_{0}}S^+_K\setminus N_{\ep,K}=S^+_K\setminus\cap_{\varepsilon<\varepsilon_{0}} N_{\ep,K}.$$ Since $\left|\cap_{\ep}N_{\ep,K}\right|=0$, we have $\xi^{(\infty)}$ is constant almost everywhere, and also everywhere because $u^{(\infty)}$ is smooth on $\overline{\R^3_+}\times(-\infty,0)$ as stated at the end of Step 3. We conclude as in Step 4 using Proposition \ref{2Dreductionbycompactness}.

\begin{remark}[vorticity alignment on concentrating sets, local setting]\label{rem.vortalconsets}
Theorem \ref{CAnconcentratingglobal} can also be localised, in the same way we localised Theorem 1.3 of \cite{giga2014liouville} in our Theorem \ref{theo.locgeomcrit}. 
 The conti\-nuous alignment condition has to be assumed on 
$$ \mathcal{C}_{\delta, x_{0}}:= \bigcup_{t\in(T-1,T)}\{(x,t): |x-x_{0}|< \delta \sqrt{T-t}\}.$$
\end{remark}

\subsection*{Funding and conflict of interest.} 
The second author is partially supported by the project BORDS grant ANR-16-CE40-0027-01 and by the project SingFlows grant ANR-18-CE40-0027 of the French National Research Agency (ANR). 
The second author also acknow\-ledges financial support from the IDEX of the University of Bordeaux for the BOLIDE project. The authors declare that they have no conflict of interest. 

\small
\bibliographystyle{abbrv}
\bibliography{concentration.bib}

\end{document}